\numberwithin{equation}{section} 
\newcommand{\pushright}[1]{\ifmeasuring@#1\else\omit\hfill$\displaystyle#1$\fi\ignorespaces}
\newcommand{\pushleft}[1]{\ifmeasuring@#1\else\omit$\displaystyle#1$\hfill\fi\ignorespaces}
 \DeclareRobustCommand{\cev}[1]{%
   \mathpalette\do@cev{#1}%
 }
 \newcommand{\do@cev}[2]{%
   \fix@cev{#1}{+}%
   \reflectbox{$\m@th#1\vec{\reflectbox{$\fix@cev{#1}{-}\m@th#1#2\fix@cev{#1}{+}$}}$}%
   \fix@cev{#1}{-}%
 }
 \newcommand{\fix@cev}[2]{%
   \ifx#1\displaystyle
     \mkern#23mu
   \else
     \ifx#1\textstyle
       \mkern#23mu
     \else
       \ifx#1\scriptstyle
         \mkern#22mu
       \else
         \mkern#22mu
       \fi
     \fi
   \fi
 }
\newcounter{alphasect}
\def\alphainsection{0}
\let\oldsection=\section
\def\section{%
  \ifnum\alphainsection=1%
    \addtocounter{alphasect}{1}
  \fi%
\oldsection}%
\renewcommand\thesection{%
  \ifnum\alphainsection=1%
    \Alph{alphasect}%
  \else%
    \arabic{section}%
  \fi%
}%
\newenvironment{alphasection}{%
  \ifnum\alphainsection=1%
    \errhelp={Let other blocks end at the beginning of the next block.}
    \errmessage{Nested Alpha section not allowed}
  \fi%
  \setcounter{alphasect}{0}
  \def\alphainsection{1}
}{%
  \setcounter{alphasect}{0}
  \def\alphainsection{0}
}%
\theoremstyle{definition}
\newtheorem{theorem}{Theorem}[section]
\newtheorem{corollary}[theorem]{Corollary}
\newtheorem{lemma}[theorem]{Lemma}
\newtheorem{prop}[theorem]{Proposition}
\newtheorem{conjecture}[theorem]{Conjecture}
\newtheorem{rem}[theorem]{Remark}
\newtheorem{definition}[theorem]{Definition}
\newtheorem{obs}[theorem]{Observation}
\begin{document}
\title{A rigorous formulation of and partial results on Lorenz's ``consensus strikes back'' phenomenon for the Hegselmann-Krause model  
}
\author{Edvin Wedin}


\usetag{proofsprel}
\usetag{proofslemmas}
\usetag{proofsuniform}

\maketitle

\abstract{In a 2006 paper, Jan Lorenz observed a curious behaviour in numerical simulations of the Hegselmann-Krause model: Under some circumstances, making agents more closed-minded can produce a consensus from a dense configuration of opinions which otherwise leads to fragmentation. Suppose one considers initial opinions equally spaced on an interval of length $L$. As first observed by Lorenz, simulations suggest that there are three intervals $[0, L_1)$, $(L_1, L_2)$ and $(L_2, L_3)$, with $L_1 \approx 5.23$, $L_2 \approx 5.67$ and $L_3 \approx 6.84$ such that, when the number of agents is sufficiently large, consensus occurs in the first and third intervals, whereas for the second interval the system fragments into three clusters. In this paper, we prove consensus for $L \leq 5.2$ and for $L$ sufficiently close to 6. These proofs include large computations and in principle the set of $L$ for which consensus can be proven using our approach may be extended with the use of more computing power.
 We also prove that the set of $L$ for which consensus occurs is open.
Moreover, we prove that, when consensus is assured for the equally spaced systems, this in turn implies asymptotic almost sure consensus for the same values of $L$ when initial opinions are drawn independently and uniformly at random. We thus conjecture a pair of phase transitions, making precise the formulation of Lorenz's ``consensus strikes back'' hypothesis. Our approach makes use of the continuous agent model introduced by Blondel, Hendrickx and Tsitsiklis. Indeed, one contribution of the paper is to provide a presentation of the relationships between the three different models with equally spaced, uniformly random and continuous agents, respectively, which is more rigorous than what can be found in the existing literature.
}

\section{Introduction}
In the classical Hegselmann-Krause model (the HK-model for short) in opinion dynamics, each agent $i$ in a set of agents indexed by integers $1, 2, 3, ..., n$ possesses an opinion $f_t(i)$ at time $t$. All agents then simultaneously update their opinion at the next time step according to the rule

\begin{equation}\label{eq:intro1}
f_{t+1}(i) = \frac{1}{ |\mathcal{N}_i|} \sum_{j \in \mathcal{N}_i} f_t(j)
\end{equation}
where $\mathcal{N}_i = \{j \in [n] : | f_t(j) - f_t(i)| \leq 1\}$, and $[n] = \{1,\, 2,\, ...,\, n\}$.

The paper normally cited in connection to this model is \cite{HK}, which presents simulations and some important basic results.
Strictly speaking, \cite{HK} gives a slightly different definition where the 1 in the expression for $\mathcal{N}_i$ is replaced by a \emph{confidence radius} $r$. We note that simultaneously scaling $r$ along with all opinions does not change the qualitative behaviour of the model, and the formulation given here, referred to as the \emph{normalised} model, is common.
 When discussing the HK-model, it is useful to employ the concept of the \emph{connectivity graph}, which takes as nodes the agents and connects the agents $i$ and $j$ precisely when $j \in \mathcal{N}_i$.

Perhaps the most basic observation is that if two agents hold opinions separated by more than 1, and no other agents holds an opinion in between, the two will never interact.
 A second, slightly less obvious, observation is that even if the current state has a connected connectivity graph, that of the updated state might be disconnected, as may be readily verified by assigning the opinions 0, 0, 1, 2, 3 and 3 to six agents and computing the update. Breaking of the connectivity graph, which by the first observation is irreversible, is referred to as \emph{fragmentation}. A third observation of \cite{HK}, which requires a little more mathematical work to verify, is that for each possible initial choice of opinions, there is some finite number $T$ such that after updating the system $T$ times, the opinion profile reaches a fixed point which is not changed by subsequent updates. 
 When we reach such a fixed point, we say that the system \emph{freezes}\footnote{The study of the time needed for freezing in the Hegselmann-Krause model has spawned at least half a dozen papers by about as many authors. State of the art results can be found in \cite{BBCN}, \cite{MT} and \cite{WH1}.},
 and it is not hard to verify that
 a frozen state must consist of a set of \emph{clusters}, where agents in a cluster are in agreement and clusters are pairwise separated by strictly more than 1.
 A configuration consisting of a single cluster is called a \emph{consensus}.

The HK-model has received considerable attention, and the original paper \cite{HK} has close to 3000 citations on Google Scholar at the time of writing. Most of the citing papers present simulations of all sorts of variations on the original model.
There are, to date, only a handful of papers with rigorous mathematical results for the basic model, e.g.: \cite{BBCN}, \cite{MT}, \cite{WH1}, \cite{WH2}, \cite{BHT2}.

In many instances, interesting hypotheses have first arisen from simulations. One particularly nice example of this concerns the question of what the final configuration typically looks like when the model is initiated with a large number $n$ of agents equidistributed on the interval 
$[0,\,L]$, for some fixed $L$. 
In a seminal 2006 paper \cite{L}, Lorenz approached this problem in two ways, the first of which was to simply simulate the dynamics of (1) for equally spaced agents on various intervals, including half-infinite ones. The second way was to devise a clever interactive Markov chain (IMC) model where the opinion space is discretised and the agents change sections according to a stochastic matrix chosen so as to mimic the original behaviour of the model, arguing that the models should, intuitively, be equal in the limit when the discretisation is refined. In this way, he produced an early way of simulating not the dynamics of the actual agents, but rather that of their \emph{distribution}. The point is that this, at least morally, should hint at the typical behaviour of the actual model for large numbers of agents.

The paper contains no mathematical proofs, but various interesting observations and remarks on the presented simulations.

One of Lorenz's observations, which gave the paper its title, is that, in his IMC model, the resulting configuration of clusters behaves unexpectedly when the radius of confidence is varied. Adhering to the convention of using a normalised radius, which we will keep throughout this paper, his finding translates to the following. When opinions are spread on an interval of length $L<1$, all agents reach a consensus, and this remains true for a while when $L$ grows larger than 1. At around $L \approx 5$, the final configuration undergoes a bifurcation, and changes from one to three clusters. What is even more interesting is that around $L \approx 6$ the system undergoes another bifurcation, and the final state returns to consisting of a single cluster. In the words of Lorenz, consensus ``strikes back''!

The following conjecture is implicit in Lorenz's paper:
\begin{conjecture}\label{con:Lorenz}
  Denote by $C_\text{Uni}(L,n)$ the random final number of clusters reached by updating according to \eqref{eq:intro1} when starting from $n$ agents whose opinions are drawn uniformly and independently at random from the interval $[0,\, L]$.

  Then the limit $C_\text{Uni} (L) = \lim_{n \to \infty} C_\text{Uni}(L,\, n)$ exists as a random variable and there exist numbers $0 < L_1 < L_2 < L_3$, such that:
  \begin{enumerate}
  \item If $0 \leq L < L_1$, then $C_\text{Uni}(L) = 1$ almost surely.
  \item If $L_1 < L < L_2$,  then $C_\text{Uni}(L) = 3$ almost surely.
  \item If $L_2 < L < L_3$,  then $C_\text{Uni}(L) = 1$ almost surely.
  \end{enumerate}
\end{conjecture}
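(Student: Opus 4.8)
The plan is to attack Conjecture~\ref{con:Lorenz} through the three models named in the abstract — $n$ equally spaced agents, $n$ i.i.d.\ uniform agents, and the continuous-agent model of Blondel, Hendrickx and Tsitsiklis — and, since the full phase-transition picture is out of reach, to aim for the strongest rigorous consequences: consensus on explicit $L$-ranges, openness of the consensus set, and transfer from the deterministic equally spaced system to the random one. Write $C_{\text{Eq}}(L,n)$ for the final number of clusters of $n$ equally spaced agents on $[0,L]$ and $C_{\text{Cont}}(L)$ for the analogous quantity in the continuous model started from the uniform density on $[0,L]$. The first task is to set up precise relationships between these: a stability/continuity lemma stating that, after any fixed finite number of update steps, the profile depends continuously (in a Lévy- or Wasserstein-type metric on the empirical/continuous distribution) on the initial profile, \emph{provided} all neighbourhood-membership inequalities $|f_t(j)-f_t(i)|\le 1$ are strict at every step. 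Combined with an a priori bound on the freezing time that is uniform over the relevant configurations, this will let us conclude that consensus in the continuous model at parameter $L$, achieved through strict inequalities, forces $C_{\text{Eq}}(L,n)=1$ for all large $n$. The subtle point here is exactly the discontinuity of the HK update when an agent sits at distance precisely $1$ from another, so one must argue these coincidences are either absent or harmless for the $L$ under consideration.

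The second and computational stage is to prove $C_{\text{Cont}}(L)=1$ for $L$ ranging over $[0,5.2]$ and over a neighbourhood of $6$. The idea is to track the continuous dynamics started from the uniform density by a finite-dimensional bookkeeping of the interfaces/pieces of the evolving profile, run this for a bounded number of steps in rigorous interval arithmetic, and verify in each run that the connectivity graph never breaks while the diameter of the support contracts below $1$ — a certificate of consensus. Because interval arithmetic propagates validated enclosures, a successful sweep over a sufficiently fine grid of $L$-values, with the gaps between grid points filled in by the stability lemma, upgrades to a genuine theorem covering the whole interval. I expect this to be the main obstacle: one needs (i) an exact or enclosable description of the continuous dynamics from the uniform start, (ii) an a priori cap on the number of steps to freezing, and (iii) control of the interval blow-up across that many steps simultaneously for all $L$ in the range, with more computing power in principle extending the reachable set of $L$.

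The third stage is the two ``soft'' statements. Openness of $\{L : C_{\text{Cont}}(L)=1\}$ is immediate from the stability lemma: if consensus is reached at $L_0$ in finitely many steps with strict inequalities throughout, the same certificate survives for $L$ near $L_0$; pulling this back through the model relationship gives openness of the consensus set for the equally spaced family as well. For the transfer to the uniform random model, the empirical measure of $n$ i.i.d.\ points uniform on $[0,L]$ converges almost surely (Glivenko--Cantelli, with a Wasserstein rate) to the uniform density; feeding this a.s.-small random perturbation into the stability lemma shows that, almost surely, for all large $n$ the random system shadows the continuous one all the way to freezing, hence $C_{\text{Uni}}(L,n)\to C_{\text{Cont}}(L)$ a.s. In particular $C_{\text{Uni}}(L)=1$ a.s.\ wherever we have certified $C_{\text{Cont}}(L)=1$, yielding parts (i) and (iii) of the conjecture on the $L$-ranges covered.

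What this plan does \emph{not} deliver, and what I would flag as genuinely beyond the method, is the existence of the exact thresholds $L_1<L_2<L_3$ and the precise value $3$ in the intermediate regime: that would require a matching lower bound on the cluster count for $L\in(L_1,L_2)$, an upper bound of $3$ there, and a monotonicity of $C_{\text{Cont}}$ in $L$ that is not evidently true — none of which is accessible by finite validated computation plus perturbation. Hence the statement remains a conjecture, with the present approach establishing the consensus intervals, the openness, and the equally-spaced-to-uniform transfer as partial results.
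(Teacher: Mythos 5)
The statement you were asked about is Conjecture~\ref{con:Lorenz}, which the paper itself does \emph{not} prove --- it is stated as a conjecture and only the partial results of Theorem~\ref{thm:intro1} and Corollary~\ref{cor:intro2} are established. You correctly recognise this and honestly restrict your claims to the partial results (explicit consensus ranges, openness, transfer to the random model), which is the right call; on that level your overall architecture (continuous-agent model as the reference object, a continuity lemma for the update operator, validated interval arithmetic over a grid of $L$-values, Glivenko--Cantelli to pass to i.i.d.\ uniform initial opinions) matches the paper's. One difference in flavour: the paper's continuity statement (Proposition~\ref{lem:cont}) is with respect to the sup norm on quantile functions at \emph{(weakly) regular} profiles, and the tie/discontinuity issue you worry about is handled through regularity bounds propagated by Proposition~\ref{prop:regularity} rather than through strictness of the neighbourhood inequalities.

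There are, however, two genuine gaps in your plan for the computational stage. First, your proposal to ``track the continuous dynamics \dots by a finite-dimensional bookkeeping of the interfaces/pieces of the evolving profile'' is precisely what the paper argues is infeasible: the number of corners of $U^t f^L$ grows like $2^t$ to $3^t$ and the piecewise expressions become intractable after a few steps. Moreover, the naive error propagation from the continuity constants blows up like $e^{\Theta(3^t)}$ (Table~1), so ``controlling the interval blow-up'' directly, which you flag as the main obstacle, cannot be done. The paper's resolution is the refinement/coarsening machinery of Section~3: one computes with a \emph{single} large discrete profile $f^{n,L}$ and proves deviation bounds (Lemma~\ref{lem:ghostagents}, Theorem~\ref{cor:devprop}) that are uniform in the refinement parameter $k$, which by the equivalence of statements (i)--(iv) at the end of Section~2 suffices to reach the continuum limit. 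Second, your plan cannot handle $L=6$ at all: there the diameter only drops below $2$ after roughly $780$ updates, far beyond what any step-by-step validated computation with propagated error bounds can certify. The paper needs the separate structural Theorem~\ref{thm:6tocons} (the microcluster/``good sets'' argument), which shows that once a profile enters a certain five-group configuration, consensus follows in bounded time without further numerical tracking; only eight explicit updates are then computed. Without an analogue of this, your method would establish consensus for $L\le 5.2$ but not near $L=6$, and hence would miss the ``consensus strikes back'' half of the partial results entirely.
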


Lorenz discusses his observation in relation to a 2004 conjecture by Hegselmann, stating that for any $L$ there might be a number $n$ such that $n$ equally spaced agents on an interval of length $L$ must eventually reach a consensus, a conjecture that is still not disproven rigorously.

A step forward in our understanding of equidistributed agents on an interval of length $L$ was taken in a 2007 paper of Blondel, Hendrickx and Tsitsiklis \cite{BHT2}. The authors choose another approach for studying the dynamics of the distribution of agents, namely to consider a continuum of agents and index them not by a set of natural numbers, but by an interval $I$ of real numbers. Replacing the sum in \eqref{eq:intro1} by an integral, the analogue of \eqref{eq:intro1} is then that for every agent $\alpha \in I$, its updated opinion is given by

\begin{equation}\label{eq:update}
f_{t+1}(\alpha) = \frac{1}{\mu (\mathcal{N}_\alpha)}  \int_\mathcal{N_\alpha} f_t(\beta) d\beta
\end{equation}
where $\mathcal{N}_\alpha = \{ \beta \in I : |f_t(\alpha) - f_t(\beta)| \leq 1\}$.

In contrast to the IMC model of Lorenz this formulation doesn't require a finite discretisation of the opinion space. The downside is that it is hard to use for actual formal computations, but it is very useful from a theoretical point of view.

~

The three chief contributions in this paper are
\begin{enumerate}
\item to develop techniques for finding rigorous bounds on how much the evolution of a finite number $n$ of equally spaced agents on an interval of length $L$ may differ from the limiting case when $n$ goes to infinity,
\item to give a rigorous presentation of the relationships between the three different models with equally spaced, uniformly random and continuous agents, respectively,
\item using (i) and (ii), to prove the following theorem and corollary:
\end{enumerate}

\begin{theorem}\label{thm:intro1}
  Denote by $C_\text{Eq}(L,n)$ the final number of clusters reached by updating according to \eqref{eq:intro1} when starting from $n$ agents whose opinions are equally spaced on the interval $[0,\, L]$, and let
  \begin{align*}
      C_\text{Eq} (L) = \lim_{n \to \infty} C_\text{Eq}(L,\, n),
  \end{align*}
whenever the limit exists, i.e. when $C_\text{Eq}(L,\,n)$ is constant for all sufficiently large $n$.
  
\begin{enumerate}
  \item $\{L>0 : C_{Eq}(L) = 1\}$ is an open set.
  \item If $L_1' = 5.2$, $L_2' = L_3' = 6$, there exists some number $T$ such that, if $L \in [0,\, L_1'] \cup [L_2',\, L_3']$ then, for all sufficiently large $n$, the $T$'th update of the corresponding equally spaced profile is a consensus.

    Hence, there exist numbers $ 5.2 < L_1 < L_2 < 6 < L_3$ such that if $L \in [0,\, L_1) \cup (L_2,\, L_3)$ then $C_\text{Eq}(L) = 1$.
\end{enumerate}
\end{theorem}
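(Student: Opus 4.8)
\emph{Reduction to the continuous model.} The whole argument is run on the continuous system \eqref{eq:update} with initial profile $g_0(\alpha)=\alpha$ on $[0,L]$, the bridge to the equally spaced systems being the finite-versus-continuum comparison bounds and the inter-model dictionary developed in the paper. The key elementary fact is that if at some time $t$ the diameter $g_t(L)-g_t(0)$ is strictly less than $1$ then the connectivity graph is complete, so $g_{t+1}$ is a single point, i.e.\ a consensus; moreover the diameter is non-increasing in $t$. Hence it suffices to prove: \emph{(A)} for every $L\in[0,5.2]$ and for $L=6$ there is a time $t$, bounded uniformly in $L$, with $\mathrm{diam}(g_t^L)<1$; and \emph{(B)} the set $\{L>0:\ \exists t,\ \mathrm{diam}(g_t^L)<1\}$ is open. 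Given \emph{(A)}, the comparison bounds show that for $n$ large (uniformly over the relevant compact range of $L$) the $n$-agent equally spaced profile updated by \eqref{eq:intro1} also has diameter $<1$ at that time $t$, hence is a consensus one step later; this produces the uniform $T$ of part (ii), and since $C_\text{Eq}(L,n)=1$ for all large such $n$ we get $C_\text{Eq}(L)=1$. Throughout, each $g_t^L$ is monotone non-decreasing and symmetric about $L/2$, so it is pinned down by its restriction to $[0,L/2]$ and can be trapped between explicit monotone step functions.

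\emph{Part (ii): the rigorous computation.} For the single value $L=6$, and for each of a finite family of closed subintervals covering $[0,5.2]$, I iterate \eqref{eq:update} a fixed number $T$ of times starting from $g_0(\alpha)=\alpha$ while propagating validated enclosures of $g_t^L$: at each step one uses the monotone lower/upper step functions to locate, for each level, its neighbourhood $\mathcal N_\alpha=[\ell(\alpha),r(\alpha)]$, bounds the averaging integral by interval arithmetic with $L$ allowed to range over the current subinterval, and reads off new step-function enclosures of $g_{t+1}^L$. One then checks that the enclosure after $T$ steps forces $\mathrm{diam}(g_T^L)<1$ simultaneously for all $L$ in that subinterval. (The first update is explicit and piecewise linear, which supplies a good initial enclosure.) The number $T$, the mesh of the step functions and the number of subintervals are finite but large; they are worst near $L=5.2$, close to the first bifurcation $L_1\approx 5.23$, and near $L=6$, where the surviving margin $1-\mathrm{diam}(g_T^L)$ is thin — precisely the regime where more computing power would extend the method to a larger consensus range.

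\emph{Part (i): openness, and assembling the intervals.} Let $L_0$ satisfy $C_\text{Eq}(L_0)=1$. By the inter-model dictionary the continuous profile at $L_0$ also reaches consensus, so by monotonicity of the diameter there is a finite $T_0$ with $\mathrm{diam}(g_{T_0}^{L_0})\le 1$ and therefore $\mathrm{diam}(g_{T_0+1}^{L_0})=0$: the profile has collapsed to a point, a maximally stable state. The task is then to see that the finite-horizon map $L\mapsto g_{T_0+1}^L$ keeps the diameter small (in particular $<1$) for $L$ near $L_0$; once that is done the comparison bounds again transfer $\mathrm{diam}<1$ to the equally spaced systems for all large $n$, giving $C_\text{Eq}(L)=1$ on a neighbourhood of $L_0$, which establishes \emph{(B)}, hence part (i). Part (ii) together with openness now yields the closing sentence: \emph{(A)} puts $[0,5.2]\cup\{6\}$ inside the open set $\{L:C_\text{Eq}(L)=1\}$, so this set contains some $[0,L_1)$ with $L_1\in(5.2,6)$ and some open interval about $6$; shrinking the latter to $(L_2,L_3)$ with $L_2>L_1$ gives $5.2<L_1<L_2<6<L_3$ and consensus on $[0,L_1)\cup(L_2,L_3)$.

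\emph{Where the difficulty lies.} The genuine obstacle sits in part (i): the update \eqref{eq:update} is not continuous in the initial data, since a positive-measure plateau of the profile lying at distance exactly $1$ from some level can be connected or disconnected by an arbitrarily small change of $L$, moving $g_{t+1}$ by a non-negligible amount in the sup norm. The openness proof must therefore rule out — or neutralise — such borderline configurations at the times that matter along a consensus-producing trajectory; in part (ii) this same phenomenon is absorbed, harmlessly but expensively, by the interval arithmetic.
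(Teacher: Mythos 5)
Your overall architecture (reduce to the continuous profile, verify a finite-horizon diameter criterion by validated computation, transfer back to finite $n$, and get openness from finite-horizon stability) matches the paper's in spirit, but there are two genuine gaps, one of which is fatal to the case $L=6$.

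First, your plan for part (ii) is to iterate until $\mathrm{diam}(g_t^L)<1$ and absorb everything into interval arithmetic. The paper instead stops the computation as soon as the diameter (plus accumulated error) drops below $2$ and then invokes a purely theoretical argument (Corollary \ref{cor:regularity}: a symmetric regular profile with diameter at most $2$ develops a central plateau visible to both extremists, which forces consensus in a bounded number of further steps). This matters because the number of updates needed to reach diameter $<1$ is much larger than the number needed to reach diameter $\le 2$, and the error enclosures degrade with each step. For $L=6$ the gap becomes unbridgeable: the profile enters a quasi-stable five-group configuration and, by the paper's own computations, needs roughly $780$ updates before the diameter even falls below $2$. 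No enclosure-propagation scheme of the kind you describe survives that many steps; the paper explicitly rejects this route and instead proves a structural theorem (Theorem \ref{thm:6tocons}) showing that any symmetric profile admitting a suitable decomposition into extremist groups, microclusters and a central group must reach consensus, with the bound depending only on the relative sizes of those groups. One then verifies by computation only that $U^8 f^{n,6}$ (and all its refinements) satisfies the hypotheses. Your proposal contains no substitute for this microcluster argument, so the $L=6$ half of part (ii) is unproved.

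Second, for part (i) you correctly identify the obstacle --- the update is discontinuous at profiles with a plateau sitting at distance exactly $1$ from some opinion --- but you only state that such borderline configurations ``must be ruled out or neutralised'' without doing so. The paper resolves this with the notion of a \emph{weakly regular} profile: the only plateau that can arise along the trajectory of $f^L$ is the central one inside $\mathcal{N}_0(f)\cap\mathcal{N}_1(f)$, the definition requires it to sit strictly inside that interval, Proposition \ref{prop:regularity} shows weak regularity is preserved by $U$ (until consensus), and Proposition \ref{lem:cont} shows $U$ \emph{is} continuous at weakly regular profiles. Without some version of this, your finite-horizon continuity claim $L\mapsto g_{T_0+1}^L$ is unsupported at exactly the configurations you flag as dangerous. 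A smaller omission of the same flavour: your ``comparison bounds'' between the continuum and the equally spaced systems need to be uniform in the refinement parameter; naive continuity constants compound doubly exponentially in $t$ (the paper's Table 1), which is why Section 3 of the paper develops agent-indexed deviation bounds rather than sup-norm ones.
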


\begin{corollary}\label{cor:intro2}
  Denote by $C_\text{Uni}(L,n)$ the random final number of clusters reached by updating according to \eqref{eq:intro1} when starting from $n$ agents whose opinions are drawn uniformly and independently at random from the interval $[0,\, L]$, and let
  \begin{align*}
    C_\text{Uni} (L) = \lim_{n \to \infty} C_\text{Uni}(L,\, n),
  \end{align*}
  whenever the limit random variable exists.
  Then
  \begin{enumerate}
    \item $\{L>0 : C_{Uni}(L) = 1\}$ is an open set.
  \item With the same numbers $L_i$, $L_i'$ and $T$ as in Theorem \ref{thm:intro1} we have that, if $L \in [0,\, L_1) \cup (L_2,\, L_3)$, then $C_{\text{Uni}}(L) = 1$ and, if $L \in [0,\, L_1'] \cup \{ 6 \}$ then, as $n\to \infty$, the  $(T+2)$'th update of the corresponding uniformly random profile is a consensus asymptotically almost surely.
  \end{enumerate}
\end{corollary}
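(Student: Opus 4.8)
The plan is to transfer Theorem~\ref{thm:intro1} to the uniform model by coupling the two initial profiles through order statistics. Since the update \eqref{eq:intro1} is permutation-invariant and preserves the ordering of opinions, we may always list agents by increasing opinion, so the uniform profile is $g_0(i)=X_{(i)}$, where $X_{(1)}\le\cdots\le X_{(n)}$ are the order statistics of $n$ i.i.d.\ $\mathrm{Uniform}[0,L]$ variables. By the Dvoretzky--Kiefer--Wolfowitz inequality (equivalently, standard bounds on the uniform quantile process), $\max_i\bigl|X_{(i)}-\tfrac{i-1}{n-1}L\bigr|\le\delta_n$ with probability at least $1-Ce^{-2n\delta_n^{2}}$; taking $\delta_n=\sqrt{2(\log n)/n}$ this is $1-O(n^{-4})$, so a.a.s.\ $g_0$ lies within sup-distance $\delta_n\to0$ of the equally spaced profile $f_0$ on $[0,L]$, with failure probabilities summable in $n$.

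The deterministic engine is the stability machinery of contribution~(i): a profile whose sorted opinions differ from the $n$ equally spaced ones on $[0,L]$ by at most $\delta$ in sup-norm is, for the first $T$ updates, a perturbation of size $\psi(\delta,n)$ of the equally spaced evolution, with $\psi(\delta,n)\to0$ as $\delta\to0$ and $n\to\infty$. (Such a $\delta$-bound forces a two-sided density estimate --- any subinterval of length $\ell$ holds between $\tfrac{n}{L}(\ell-2\delta)$ and $\tfrac{n}{L}(\ell+2\delta)$ agents --- so each neighbourhood $\mathcal N_\alpha$ in \eqref{eq:intro1}, hence each average, is perturbed by $O(\delta)$ per step, an error amplified by only a bounded factor per update.) One applies this to the \emph{once-updated} uniform profile $g_1$: after one averaging step the relation between an agent's rank and its opinion is regular enough for $g_1$ to be a bona fide perturbed discretisation of the equally spaced evolution, which shifts the comparison by one step. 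By Theorem~\ref{thm:intro1}(ii), for $L\in[0,L_1']\cup\{6\}$ the $T$th update of the equally spaced profile is a single cluster for all large $n$; hence a.a.s.\ $g_{T+1}$ sits inside an interval of length $2\psi(\delta_n,n)<1$, so its connectivity graph is complete and $g_{T+2}$ is an exact consensus --- the $(T+2)$nd update named in the statement. Since the per-$n$ failure probabilities are summable, Borel--Cantelli --- applied on one probability space carrying an i.i.d.\ $\mathrm{Uniform}[0,L]$ sequence of which the $n$-agent profile uses the first $n$ coordinates --- gives that a.s.\ $C_\text{Uni}(L,n)=1$ for all large $n$, so the limit random variable exists and equals $1$. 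For $L\in[0,L_1)\cup(L_2,L_3)$ one argues identically, invoking instead the concluding clause of Theorem~\ref{thm:intro1}(ii), which gives $C_\text{Eq}(L)=1$, i.e.\ equally spaced consensus at some $L$-dependent time $T_L$; the same coupling then yields $C_\text{Uni}(L)=1$, which is the first assertion of part~(ii).

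For part~(i), I would use the equivalences of contribution~(ii). Routing both finite models through the Blondel--Hendrickx--Tsitsiklis continuum model with uniform initial density on $[0,L]$, the coupling above shows that $\{L:C_\text{Uni}(L)=1\}$, $\{L:C_\text{Eq}(L)=1\}$ and $\{L:\text{the continuum model on }[0,L]\text{ reaches consensus}\}$ all coincide, the last being open by (the proof of) Theorem~\ref{thm:intro1}(i): if the continuum model on $[0,L_0]$ reaches consensus it is a point mass one step before freezing, so both finite profiles on $[0,L_0]$ are trapped in sub-unit intervals by a time $T$ uniform in $n$ with positive margin, and since the initial data varies continuously with $L$ this persists for $L$ near $L_0$; conversely a robust continuum fragmentation (a gap strictly exceeding $1$) is inherited by both finite models, so that the only points excluded from all three sets are knife-edge values of $L$. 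Openness of $\{L:C_\text{Uni}(L)=1\}$ follows.

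The main obstacle is the stability estimate of contribution~(i) and, in particular, its interface with the equally spaced consensus proof. The HK update is a \emph{discontinuous} function of the agents' positions: an agent at distance exactly $1$ from another can leave or enter a neighbourhood under an arbitrarily small perturbation, so one must convert sup-norm closeness of order statistics into a genuine density bound and then control how the resulting $O(\delta)$ per-step error compounds over $T$ updates without exploding --- and, decisively, rule out that when the equally spaced profile contracts to a point the nearby uniform profile instead sheds a tiny cluster near one end. This is precisely why Theorem~\ref{thm:intro1}'s consensus proof must furnish not merely a single final cluster but a quantitative margin, uniform in $n$ (a sub-unit diameter bounded away from $1$ at a fixed time), so that the $o(1)$ discrepancy carried along by the uniform profile remains harmless.
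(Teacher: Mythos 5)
Your proposal is correct in substance and shares the paper's overall strategy --- uniform convergence of the empirical quantile function to the linear profile plus finite-time stability of the dynamics, yielding consensus at step $T+2$ and openness via the equivalence of the equally spaced, continuum and random models --- but it implements the stability step differently. The paper never compares the random profile directly to the discrete equally spaced profile $f^{n,L}$; instead it routes \emph{both} through the continuum profile $f^L$, using the chain $U^T f^{n,L}$ consensus $\Rightarrow D(U^Tf^L)<1$ $\Rightarrow$ $D(U^{T+1}g)<1$ for every $g$ with $\lVert g-f^L\rVert_\infty<\varepsilon$, which is obtained by iterating Proposition \ref{lem:cont} (continuity of $U$ at weakly regular profiles) together with Proposition \ref{prop:regularity} (preservation of weak regularity); Corollary \ref{cor:glivcan} then places the random profile inside the $\varepsilon$-ball a.a.s. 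This buys the paper two things you have to supply by hand: (a) the stability estimate comes for free from results already proved for continuous profiles, whereas your direct discrete-to-discrete comparison needs a separate density-and-averaging lemma (Proposition \ref{lem:cont} does not apply at discrete profiles, which are never weakly regular), so your parenthetical "$O(\delta)$ per step, bounded amplification" claim is a genuine proof obligation rather than a citation --- it is true, but it is essentially a rederivation of the proof of Proposition \ref{lem:cont} with $\Theta(n)$-sized neighbourhoods playing the role of the positive-measure ones; and (b) your device of passing to the once-updated profile $g_1$ is unnecessary and somewhat confusing ($g_0$ already satisfies the sup-norm and density hypotheses), though your bookkeeping still lands on the correct index $T+2$. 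Your use of Dvoretzky--Kiefer--Wolfowitz with summable error probabilities and Borel--Cantelli is stronger than needed; the paper's Lemma \ref{lem:glivcan} already gives almost sure uniform convergence. Finally, for part (i) your detour through "robust continuum fragmentation" to show the complements agree is not required: it suffices that the three consensus sets coincide (the paper's equivalences (i)--(iii)) and that the equally spaced one is open by Theorem \ref{thm:intro1}(i).
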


We will build on the results of \cite{BHT2} in several ways, and refer to that paper for some proofs and additional background.

The rest of the paper will be structured as follows:

{\bf Section 2} will serve as a theoretical foundation. Here, we will develop a rigorous theory of opinion profiles, both in the traditional discrete case, i.e. when the number of agents is finite, and in that of an agent continuum, as well as tools to relate the two. In particular, we will introduce the concepts of \emph{refining} and \emph{coarsening}, which will be used heavily to handle and relate different deterministic samples from the same distribution. This section also presents, and in some cases strengthens, some previously known results that will be used.
An important result (Proposition \ref{lem:cont}) is that the updating operation \eqref{eq:update} is continuous, with respect to the infinity norm, at so-called \emph{regular} profiles (Definition \ref{def:regularity}).
At the end of the section, we prove part (i) of Theorem \ref{thm:intro1} and show how Corollary \ref{cor:intro2} follows from Theorem \ref{thm:intro1}.

We are then left to prove part (ii) of Theorem \ref{thm:intro1} in subsequent sections. Our basic strategy is to reduce the proof to a finite computation\footnote{What we mean by this is a computation that is certainly finite and, if it produces a certain result, allows us to deduce the theorem.}.
To do so, we need to go beyond the general theory of Section 2 and develop explicit quantitative bounds when comparing the updates of a discrete profile and small perturbations of it. In particular, we compare updates of a discrete profile and its refinements. This material is presented in {\bf Section 3}.

In {\bf Sections 4 and 5}, we apply the results of Section 3 to the case of equally spaced opinions. In order to ensure that the resulting finite computations are manageable, we will also use a result from Section 2 (Corollary \ref{cor:regularity}).

In {\bf Section 4} we prove the existence of $L_1'$. This involves a large but manageable number of computations for a grid of $L$-values up to $L = 5.2$. In principle we could push beyond 5.2, but as one gets closer to the conjectured phase transition at about 5.23 the amount of computing power needed increases drastically.

Consensus at $L = 6$ is proven in {\bf Section 5}. This time, to reduce the proof to a \emph{manageable} computation requires more than just applying the theory from Section 3.
 Lorenz already observed that the mechanism by which consensus is reached after it strikes back is different than for smaller values of $L$. The profile quickly settles into a state where most agents reside in 5 groups and from there the span of opinions shrinks \emph{very slowly}. The error analysis from Section 3 is no longer practical over such time scales. Hence, we prove a theorem (Theorem \ref{thm:6tocons}) saying, informally, that a certain class of profiles in which most agents reside in 5 groups must evolve to consensus. This effectively means that we just need to compute the updates of a single profile until the conditions in the theorem are satisfied, and then use the error analysis from Section 3. This turns out to lead to a manageable computation.

 All our computations are carried out in the high precision ball arithmetic of the package Arb\cite{Arb}. All code is written in Julia\cite{Julia}.
 
{\bf Section 6} contains a discussion of our results and of possible future work.

\section{Definitions and  results}
In what follows, we will find it convenient to adopt a notation that differs slightly from \eqref{eq:intro1}.
Still, we consider a set $[n] = \{1,\, 2,\, ...,\, n\}$ of agents, and their ``opinions'' $f(1),\, ...,\, f(n)$. For the updates we will follow the notation of \cite{BHT2} which uses the \emph{updating operator} $U$, defined as
\begin{equation}\label{eq:update0}
  Uf(i) = \frac{1}{\# \left(\mathcal{N}_i(f)\right)}\sum_{j \in \mathcal{N}_i(f)} f(j)
\end{equation}
where $\mathcal{N}_i(f) = \{j: |f(j)-f(i)| \leq 1\}$.

This formulation, clearly equivalent with that given by \eqref{eq:intro1}, will be referred to as \emph{the traditional model}, and we will frequently describe results and procedures in terms of this, although the setting formally will be more general.

We begin by introducing some notation: Let $\langle A(i) \rangle_{i \in B}$ denote the average of a function $A$ with values taken from a non-empty set $B$. With this notation, \eqref{eq:update0} is condensed to
\begin{equation}\label{eq:update0b}
  Uf(i) = \langle f(j) \rangle_{j \in \mathcal{N}_i(f)}.
\end{equation}

In what follows, we will use a more general formalism, largely following \cite{BHT2}. This is to be better able to compare the behaviour of the model for different values of $n$, and to relate this to the resulting behaviour if we let $n \to \infty$. 

\begin{definition}\label{def:sim}
  If, for two bounded and Lebesgue measurable functions $f,\, g :[0,\,1] \to \mathbb{R}$, there exists a measure preserving bijection $\sigma:[0,\, 1] \to [0,\, 1]$ such that $f = g \circ \sigma$, we say that $f$ and $g$ are \emph{permutation equivalent} and write $f \sim g$.
\end{definition}

\begin{obs}
  If two functions $f$ and $g$ are permutation equivalent, it follows that $\mu \left( f^{-1} \left( \left( -\infty,\, x\right] \right) \right) = \mu \left( g^{-1} \left( \left( -\infty,\, x \right] \right) \right)$ for all $x \in \mathbb{R}$.
\end{obs}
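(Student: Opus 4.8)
The plan is to unwind the definitions and reduce everything to the defining property of a measure-preserving map. By Definition \ref{def:sim}, permutation equivalence $f \sim g$ means there is a measure-preserving bijection $\sigma : [0,1] \to [0,1]$ with $f = g \circ \sigma$. The first step is the purely set-theoretic identity
\begin{equation*}
  f^{-1}\bigl((-\infty,\, x]\bigr) = (g \circ \sigma)^{-1}\bigl((-\infty,\, x]\bigr) = \sigma^{-1}\Bigl(g^{-1}\bigl((-\infty,\, x]\bigr)\Bigr),
\end{equation*}
valid for every $x \in \mathbb{R}$ and requiring nothing beyond $f = g \circ \sigma$.

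The second step is to take measures on both sides. Since $g$ is Lebesgue measurable, the set $A_x := g^{-1}((-\infty,\, x])$ is measurable, so the defining property of a measure-preserving map applies to it: $\mu(\sigma^{-1}(A_x)) = \mu(A_x)$. Combining this with the identity above yields
\begin{equation*}
  \mu\Bigl(f^{-1}\bigl((-\infty,\, x]\bigr)\Bigr) = \mu\bigl(\sigma^{-1}(A_x)\bigr) = \mu(A_x) = \mu\Bigl(g^{-1}\bigl((-\infty,\, x]\bigr)\Bigr),
\end{equation*}
which is exactly the claim.

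There is essentially no obstacle here; the only point that warrants a word of care is the precise reading of ``measure-preserving bijection'', namely that it guarantees $\mu(\sigma^{-1}(A)) = \mu(A)$ for all Lebesgue measurable $A$ (in particular that $\sigma^{-1}(A)$ is measurable), which is all that the argument consumes. One could equivalently phrase the conclusion as saying that $f$ and $g$ induce the same pushforward measure (law) on $\mathbb{R}$, i.e. the same cumulative distribution function; I would state it that way in a remark, since this is the property actually used later when comparing deterministic samples drawn from a common distribution.
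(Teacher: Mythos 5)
Your argument is correct and is exactly the immediate unwinding of the definitions that the paper intends — the Observation is stated without proof precisely because $f^{-1}((-\infty,x]) = \sigma^{-1}(g^{-1}((-\infty,x]))$ plus the measure-preserving property of $\sigma$ is all there is to it. Your side remark about the convention for ``measure-preserving'' and the reformulation in terms of equal pushforward laws is apt but not needed.
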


It is easy to check that $\sim$ is an equivalence relation.

\begin{definition}
  An \emph{(opinion) profile} $f$ is a non-decreasing function $[0,\, 1] \rightarrow \mathbb{R}$.

  An element $\alpha$ of the unit interval will be referred to as an \emph{agent}, and $f(\alpha)$ will be referred to as the \emph{opinion} of the agent $\alpha$.

  The set of opinion profiles is denoted by $\mathscr{O}$.
\end{definition}

These profiles will be updated according to the following adaptation of the rule \eqref{eq:update0b}:

\begin{definition}\label{def:update1}
  The \emph{updating operator} $U: \mathscr{O} \to \mathscr{O}$ takes a profile $f$ to its update $Uf$ according to the rule 
  \begin{equation*}
    Uf(\alpha) =
    \begin{cases}
      \langle f(\beta) \rangle_{\beta \in \mathcal{N}_\alpha(f)}, & \text{if }\mu(\mathcal{N}_\alpha(f)) > 0 \\
      f(\alpha), & \text{otherwise,}
    \end{cases}
  \end{equation*}
  where $\mathcal{N}_\alpha(f) = \{\beta: |f(\beta)-f(\alpha)| \leq 1\}$. The average $\langle \cdot \rangle$ over an interval $A$ of positive Lebesgue measure $| A |$ is given by
  \begin{equation}\label{eq:update1}
     \langle f(\beta) \rangle_{\beta \in A} = \frac{1}{|A|} \int_A f(\beta) d\beta.
  \end{equation}
\end{definition}

\begin{obs}\label{obs:deriverbar}
  It is easy to check that if $f$ is non-decreasing so is $Uf$, so for any profile $f$ and any natural number $t$ the $t$-fold update $U^t f$ is well defined.
\end{obs}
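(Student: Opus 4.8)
The only assertion with content is that $Uf$ is non-decreasing whenever $f$ is; granting this, $U$ maps $\mathscr{O}$ into $\mathscr{O}$, and the well-definedness of $U^t f$ for every $t\in\mathbb{N}$ is an immediate induction on $t$. So fix a profile $f$ and agents $\alpha\le\alpha'$; the goal is $Uf(\alpha)\le Uf(\alpha')$. The first thing I would record is a structural fact: since $f$ is non-decreasing, $\mathcal{N}_\alpha(f)=f^{-1}\big([f(\alpha)-1,\,f(\alpha)+1]\big)$ is a sub-interval of $[0,1]$ (the preimage of an interval under a monotone map is an interval), and it always contains $\alpha$, hence is non-empty with well-defined endpoints $a_\alpha:=\inf\mathcal{N}_\alpha(f)\le\alpha\le b_\alpha:=\sup\mathcal{N}_\alpha(f)$; its Lebesgue measure is $b_\alpha-a_\alpha$, and $\int_{\mathcal{N}_\alpha(f)}f=\int_{a_\alpha}^{b_\alpha}f$ since the two integration domains differ by at most two points.

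The second step is the key monotonicity of these windows: $\alpha\le\alpha'$ implies $a_\alpha\le a_{\alpha'}$ and $b_\alpha\le b_{\alpha'}$, i.e. increasing the agent only slides the averaging interval to the right. I would prove this by contradiction. Suppose $b_\alpha>b_{\alpha'}$; since $\mathcal{N}_\alpha(f)$ is an interval with supremum $b_\alpha$ and $b_{\alpha'}<b_\alpha$, there is $\beta\in\mathcal{N}_\alpha(f)$ with $\beta>b_{\alpha'}\ge\alpha'\ge\alpha$. Then $\beta>\alpha$ forces $f(\beta)\ge f(\alpha)$, so $\beta\in\mathcal{N}_\alpha(f)$ gives $f(\beta)\le f(\alpha)+1\le f(\alpha')+1$; and $\beta>\alpha'$ forces $f(\beta)\ge f(\alpha')\ge f(\alpha')-1$. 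Hence $\beta\in\mathcal{N}_{\alpha'}(f)$, contradicting $\beta>b_{\alpha'}=\sup\mathcal{N}_{\alpha'}(f)$. The inequality $a_\alpha\le a_{\alpha'}$ is proved by the mirror-image argument.

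The third step isolates the analytic core as a lemma: if $g:[0,1]\to\mathbb{R}$ is non-decreasing and $[a,b],[a',b']\subseteq[0,1]$ satisfy $a\le a'$, $b\le b'$, $a<b$, $a'<b'$, then the average of $g$ over $[a,b]$ is at most the average of $g$ over $[a',b']$. If the two intervals are disjoint this is immediate from monotonicity of $g$ (the average over the left interval is at most any left/right limit of $g$ at the gap, which is at most the average over the right interval). If they overlap, write each average as a convex combination involving the common middle piece $[a',b]$: the left leftover piece $[a,a']$ has average $\le$ that of $[a',b]$, which has average $\le$ that of the right leftover piece $[b,b']$, and the claim follows. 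Applying the lemma with $g=f$, $[a,b]=[a_\alpha,b_\alpha]$, $[a',b']=[a_{\alpha'},b_{\alpha'}]$ handles the case where both neighbourhoods have positive measure. When a neighbourhood is degenerate — which genuinely can happen, since $f$ may have jumps exceeding $1$ — we have $a_\alpha=b_\alpha=\alpha$ and $Uf(\alpha)=f(\alpha)$ by definition of $U$; one then checks the inequality directly, e.g. if $\mu(\mathcal{N}_\alpha(f))=0$ then $Uf(\alpha)=f(\alpha)=f(a_\alpha)\le f(a_{\alpha'})\le\langle f(\beta)\rangle_{\beta\in\mathcal{N}_{\alpha'}(f)}=Uf(\alpha')$ using $a_\alpha\le a_{\alpha'}$ and monotonicity of $f$ (with the symmetric computation when $\mathcal{N}_{\alpha'}(f)$ is the degenerate one, and the trivial case when both are).

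I expect the only real friction here to be bookkeeping rather than mathematics: carefully tracking whether interval endpoints are attained, so that left- and right-hand limits of $f$ at its jump points are compared correctly, and cleanly dispatching the degenerate cases. None of this is deep, which is presumably why the paper states it as an observation; still, the jump phenomenon is the one place where a too-hasty ``it is easy to check'' could go wrong, so the plan is to make the window-monotonicity lemma and the averaging lemma explicit and let the case analysis fall out of them.
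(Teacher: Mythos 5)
Your proof is correct and complete. The paper itself offers no argument for this observation (it is declared ``easy to check'', and the analogous fact appears without detailed proof in \cite{BHT2}), so there is nothing to compare against; your route --- showing the neighbourhood $\mathcal{N}_\alpha(f)$ is an interval whose endpoints move monotonically with $\alpha$, then comparing averages of a non-decreasing function over two intervals that are shifted to the right, with the degenerate zero-measure cases handled separately --- is exactly the standard argument, and you correctly identify the jump/degenerate cases as the only points needing care.
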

\begin{obs}\label{obs:transinv}
From the definition, we see immediately that $U$ is \emph{translation invariant}, in the sense that
  \begin{align*}
  U(f + C) = Uf + C
  \end{align*}
 for any profile $f$ and any $C \in \mathbb{R}$.
\end{obs}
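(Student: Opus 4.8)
The plan is to check the identity $U(f+C) = Uf + C$ pointwise, reading off both sides directly from Definition \ref{def:update1}. The one thing that actually needs to be observed is that the neighbour sets are unaffected by a global translation: for every agent $\alpha$,
\[
\mathcal{N}_\alpha(f+C) = \{\beta : |(f(\beta)+C)-(f(\alpha)+C)| \le 1\} = \{\beta : |f(\beta)-f(\alpha)| \le 1\} = \mathcal{N}_\alpha(f),
\]
since the constant $C$ cancels inside the absolute value. Consequently $\mu(\mathcal{N}_\alpha(f+C)) = \mu(\mathcal{N}_\alpha(f))$, so the profiles $f$ and $f+C$ land in the same branch of the case distinction in Definition \ref{def:update1} at every $\alpha$.

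With that in hand I would split into the two cases. If $\mu(\mathcal{N}_\alpha(f)) = 0$, the second branch gives $U(f+C)(\alpha) = (f+C)(\alpha) = f(\alpha)+C = Uf(\alpha)+C$. If $\mu(\mathcal{N}_\alpha(f)) > 0$, then $\mathcal{N}_\alpha(f)$ is an interval of positive length (it is an interval because $f$ is non-decreasing, being the preimage under a monotone map of an interval), and by linearity of the integral in \eqref{eq:update1},
\[
U(f+C)(\alpha) = \frac{1}{|\mathcal{N}_\alpha(f)|}\int_{\mathcal{N}_\alpha(f)}\big(f(\beta)+C\big)\,d\beta = \big\langle f(\beta)\big\rangle_{\beta \in \mathcal{N}_\alpha(f)} + C = Uf(\alpha) + C.
\]
Since this holds at every $\alpha \in [0,1]$, we conclude $U(f+C) = Uf + C$, and $Uf + C$ is again a profile by Observation \ref{obs:deriverbar}.

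There is essentially no obstacle here: the entire content is the cancellation of the additive constant inside $|f(\beta)-f(\alpha)|$, which renders all the data defining $U$ — the neighbour sets and their measures — translation invariant, with the final averaging step merely reproducing the shift by linearity of the integral. The only point deserving an explicit word is the verification that $f$ and $f+C$ trigger the same case of Definition \ref{def:update1}, which the displayed computation above supplies.
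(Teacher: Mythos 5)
Your proof is correct and is precisely the ``immediate from the definition'' argument the paper intends: the paper gives no proof for this observation, and your verification that the neighbour sets (and hence the case distinction) are unchanged under translation, followed by linearity of the average, is the canonical way to fill it in.
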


\begin{obs}\label{obs:permutationupdate}
Though the operator $U$ will mainly be used for profiles, the same definition can be made for all measurable functions. With this in mind, we will occasionally without comment let $U$ act on a measurable function without checking whether or not it's non-decreasing. 
  In particular, note that for any profile $g$ and any measurable function $h$, we have $g \sim h \implies Ug \sim Uh$.
\end{obs}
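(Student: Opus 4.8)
The first sentence of the observation needs only that Definition~\ref{def:update1} still typechecks for an arbitrary Lebesgue measurable $h:[0,1]\to\mathbb{R}$: the set $\mathcal{N}_\alpha(h)=\{\beta:|h(\beta)-h(\alpha)|\le 1\}$ is the preimage, under the measurable map $\beta\mapsto h(\beta)-h(\alpha)$, of a closed interval, hence measurable, so the same case distinction defines $Uh$ once one reads ``average over $A$'' as $\frac1{\mu(A)}\int_A$ for an arbitrary measurable set $A$ of positive measure; if $h$ is moreover bounded, the integral is finite. No monotonicity of $h$ is used anywhere in this.

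For the implication, suppose $g\in\mathscr{O}$ and $g\sim h$, and pick, by Definition~\ref{def:sim}, a measure preserving bijection $\sigma:[0,1]\to[0,1]$ with $h=g\circ\sigma$. The plan is to prove the sharper statement $Uh=Ug\circ\sigma$, which by Definition~\ref{def:sim} gives $Uh\sim Ug$, and hence $Ug\sim Uh$ since $\sim$ is an equivalence relation. Fix $\alpha\in[0,1]$ and set $\gamma=\sigma(\alpha)$. The first step is the identity
\[
\mathcal{N}_\alpha(h)=\{\beta:|g(\sigma\beta)-g(\gamma)|\le 1\}=\{\beta:\sigma\beta\in\mathcal{N}_\gamma(g)\}=\sigma^{-1}\bigl(\mathcal{N}_\gamma(g)\bigr),
\]
a measurable set, since $\sigma$ is measurable and $\mathcal{N}_\gamma(g)$ is measurable.

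The second step is the change-of-variables identity $\int_{[0,1]}\phi(\sigma\beta)\,d\beta=\int_{[0,1]}\phi(\beta)\,d\beta$, valid for every bounded (or nonnegative) measurable $\phi$ because $\sigma$ preserves Lebesgue measure. Applying it with $\phi=\mathbf{1}_{\mathcal{N}_\gamma(g)}$ gives $\mu(\mathcal{N}_\alpha(h))=\mu(\mathcal{N}_\gamma(g))$, and applying it with $\phi=g\cdot\mathbf{1}_{\mathcal{N}_\gamma(g)}$ (bounded, as $g$ is a profile) gives $\int_{\mathcal{N}_\alpha(h)}h=\int_{\mathcal{N}_\alpha(h)}g(\sigma\beta)\,d\beta=\int_{\mathcal{N}_\gamma(g)}g$. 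Hence, if the common measure $\mu(\mathcal{N}_\alpha(h))=\mu(\mathcal{N}_\gamma(g))$ is positive, dividing by it gives $Uh(\alpha)=Ug(\gamma)$; and if it is $0$, then $Uh(\alpha)=h(\alpha)=g(\gamma)=Ug(\gamma)$ directly from the ``otherwise'' branch. Either way $Uh(\alpha)=Ug(\sigma(\alpha))$. Finally, $Ug\circ\sigma$ is bounded and measurable (the non-decreasing function $Ug$ is Borel, $\sigma$ is measurable), so $Uh=Ug\circ\sigma$ is a legitimate instance of $\sim$ and we are done.

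I do not expect a genuine obstacle here; the only points deserving care are that $\mathcal{N}_\alpha(h)$ is measurable even though $h$ may fail to be monotone — which is precisely why the extension of $U$ to arbitrary measurable functions discussed above is needed for the statement to make sense — and that one never needs $\sigma^{-1}$ to be measurable, since the change-of-variables formula uses only that $\sigma$ pushes Lebesgue measure forward to itself.
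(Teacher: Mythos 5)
Your proof is correct, and since the paper states this as an Observation without supplying a proof, your change-of-variables argument (showing the sharper identity $Uh = Ug\circ\sigma$, from which $Uh\sim Ug$ is immediate) is exactly the intended justification. The points you flag — measurability of $\mathcal{N}_\alpha(h)$ for non-monotone $h$, reading the average over an arbitrary measurable set of positive measure, and matching the zero-measure branches via $\mu(\mathcal{N}_\alpha(h))=\mu(\mathcal{N}_{\sigma(\alpha)}(g))$ — are precisely the ones that need care, and you handle them correctly.
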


For an agent $\alpha$ and a profile $f$, we will refer to the set $\mathcal{N}_\alpha(f)$ as the \emph{neighbourhood of $\alpha$}, and to the members of said set as the \emph{neighbours of $\alpha$}. For two agents $\alpha$ and $\beta$, we will also say that $\alpha$ can \emph{see} $\beta$, or that $\beta$ is \emph{within sight} of $\alpha$, if and only if $\beta \in \mathcal{N}_\alpha(f)$.

The following definition lets us use this formalism to emulate the traditional model:

\begin{definition}
  A \emph{discrete pre-profile on $n$ agents} is a function $ [0,\,1] \rightarrow \mathbb{R}$ which, for every integer $2 \leq i \leq n$,  is constant on the interval $(\frac{i-1}{n},\, \frac{i}{n}]$, as well as on the interval $[0,\, \frac{1}{n}]$.

    A \emph{discrete profile on $n$ agents} is a profile which is also a discrete pre-profile on $n$ agents.

  For a discrete profile $g$ on $n$ agents, we will let the term \emph{agent} refer to an interval of the form $(\frac{i-1}{n},\, \frac{i}{n}]$, for $2 \leq i \leq n$, or $[0,\, \frac{1}{n}]$
. 
          
  The set of discrete profiles on $n$ agents is denoted by $\mathscr{O}_n$.
\end{definition}
For discrete profiles, we will abuse notation by referring to agents by their index, and often adopt the shorthand notation of writing $g(i)$ instead of $g(\frac{i}{n})$ and $\mathcal{N}_i$ instead of $\mathcal{N}_{\frac{i}{n}}$ when there is no risk of confusion. 

It should be clear from the context which of the two notations is being used, but 
as a rule we will use the Latin indices $i$, $j$, $n$ and so on to denote integers, whenever the shorthand is used, and Greek letters or fractions otherwise.

As observed in Section 1, a well known property of the traditional Hegselmann-Krause model is that any profile with a finite number of agents must \emph{freeze}, that is reach a fixed point, in finite time. This can be summarised as follows. 

\begin{obs}\label{obs:freezing}
  Let $f$ be a discrete profile. Then there exists $T \in \mathbb{N} \cup \{0\}$ such that $U^T f = U^{T+t} f$ for any $t \in \mathbb{N}$. The smallest such $T$ is called the \emph{freezing time} of $f$.
\end{obs}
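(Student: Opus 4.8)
The plan is to first reduce to the traditional model and then invoke the classical finite-time convergence of Hegselmann--Krause. A discrete profile $g \in \mathscr{O}_n$ is determined by its values $g(1) \le \cdots \le g(n)$ on the $n$ agents, and for $\alpha$ in the $i$-th agent one has $\mu(\mathcal{N}_\alpha(g)) = \#\{j : |g(j)-g(i)| \le 1\}/n > 0$ and $Ug(\alpha) = \langle g(j)\rangle_{j \in \mathcal{N}_i(g)}$; thus $U$ maps $\mathscr{O}_n$ into itself and acts there exactly as the traditional updating operator \eqref{eq:update0}. So Observation~\ref{obs:freezing} is equivalent to the statement that the traditional model on a fixed finite number of agents reaches a fixed point after finitely many steps, which is the classical result recalled in the introduction and due to \cite{HK}; what follows is a sketch of the standard argument, with references for the one non-routine step.

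First, $U$ preserves the order of agents: if $g(i) \le g(j)$, then $\mathcal{N}_i(g)$ and $\mathcal{N}_j(g)$ are intervals of consecutive indices whose left endpoints and whose right endpoints are each weakly ordered in the same way, so, $g$ being non-decreasing, $Ug(i) = \langle g \rangle_{\mathcal{N}_i(g)} \le \langle g \rangle_{\mathcal{N}_j(g)} = Ug(j)$. Consequently the agents sharing a common opinion at a given time form blocks of consecutive indices, and once $U^t g(i) = U^t g(i+1)$ these two agents have identical neighbourhoods at every later time and hence remain equal; the partition of $[n]$ into maximal equal-opinion blocks therefore only coarsens in $t$, and being monotone in the finite lattice of partitions it is eventually constant, say from time $T_0$. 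Symmetrically, if $|U^t g(i) - U^t g(i+1)| > 1$, then $i$ and $i+1$ lie in distinct connected components of the connectivity graph, and one checks that $\mathcal{N}_i \subseteq \{1,\dots,i\}$ and $\mathcal{N}_{i+1} \subseteq \{i+1,\dots,n\}$ at that time, so $U^{t+1}g(i) \le U^t g(i)$ while $U^{t+1}g(i+1) \ge U^t g(i+1)$ and the gap stays $>1$; hence the partition into connected components only refines in $t$ and is also eventually constant, say from time $T_1 \ge T_0$.

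It remains to show that a profile whose block and component structures are already $U$-invariant is a fixed point of $U$ --- this is the genuine content, and the main obstacle. Restricting to one connected component and passing to the weighted ``super-agent'' model (one super-agent of integer weight per block, weights summing to $n$), one has $k$ super-agents with strictly increasing opinions $x_1 < \cdots < x_k$ at every step; if $k \ge 2$ the component is connected, so consecutive $x_m$ differ by at most $1$, its diameter $x_k - x_1$ is strictly decreasing (the two extreme super-agents move strictly inward, as in the previous paragraph), and it must exceed $1$ at every step, for otherwise all super-agents would be mutually visible and would collapse to their common weighted mean in one step, contradicting invariance of the block structure. That these constraints are in fact incompatible --- equivalently, that the system reaches its limit after finitely, not merely countably, many steps --- is the non-trivial classical fact; note that the naive bound $U x_1 - x_1 \ge \tfrac1n (x_2 - x_1)$ only yields geometric convergence of the diameter to a value $\ge 1$, so a finer argument is needed, for which we refer to \cite{HK} and to the quantitative refinements in \cite{BBCN}, \cite{MT}, \cite{WH1}. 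Granting it, $U^{T_1} g$ is a fixed point and $T = T_1$ works.
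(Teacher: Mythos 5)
Your proposal is correct and, in substance, takes the same route as the paper, which states this as an Observation without proof and leans on the classical finite-time convergence result attributed to \cite{HK} in the introduction. Your preliminary reductions (that $U$ restricted to $\mathscr{O}_n$ agrees with the traditional update \eqref{eq:update0}, that the equal-opinion block partition only coarsens while the connected-component partition only refines, and that the extremists of a surviving multi-block component move strictly inward while its diameter must stay above $1$) are all sound, and you correctly isolate and honestly flag the one genuinely non-trivial step --- that convergence is attained in finitely many steps rather than merely asymptotically --- deferring it to \cite{HK} and the rate bounds in \cite{BBCN}, \cite{MT}, \cite{WH1}, exactly as the paper does.
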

\begin{definition}
   For a profile $f$, let $U^\infty f$ denote the pointwise limit $\lim_{t \to \infty } U^t f$, whenever the limit exists.
\end{definition}
By Observation \ref{obs:freezing}, $U^\infty f$ is well defined for any discrete profile $f$. It would follow from Conjecture 2 in \cite{BHT2} that it is well defined for any profile $f$, but this fundamental problem remains unsolved.

On the way to freezing, the agents in a profile will typically agregate into \emph{clusters}, and we make the following definition.
\begin{definition}
   In a profile $f$, a maximal set of agents which share the same opinion, that is, a maximal set of agents $\{\alpha : f(\alpha) = x\}$ for some $x$, is called a \emph{cluster}.

   A profile where all agents lie in a single cluster, i.e. a constant profile, is referred to as a \emph{consensus}, and, given a time $t$, a profile $f$ such that $U^t f$ consists of a single cluster is said to have \emph{reached a consensus} at time $t$.
\end{definition}

To have a formal way of manipulating profiles, we make the following definition.
\begin{definition}
  Given a discrete profile $f$ on $n$ agents, \emph{moving} an agent $i \in \{1,\, 2,\, ...,\, n\}$ will refer to the act of changing the value of $f(\alpha)$ for all $\alpha$ in the interval corresponding to the index $i$ to some common value, and replacing the resulting function $f_2$ with a profile $f_3$ such that  we have the equivalence $f_2 \sim f_3$ with the relation from Definition \ref{def:sim}. The amount by which $f(\alpha)$ is changed will be referred to as the amount \emph{by} which $\alpha$ was moved.
\end{definition}
We note that, in this definition, we will have $f_2 =f_3$ if the initial change in value preserves the non-decreasing quality of $f$. 

The following definitions present the new notions of coarsening and regular refinement of profiles, which will be central in the proofs to come.

\begin{definition}
  Given a discrete profile $f$ on $n$ agents and $k \in \mathbb{N}$, a \emph{$k$-regular refinement} of $f$ is a profile $g$ on $n+(n-1)k$ agents, such that $f(\frac{i}{n}) = g(\frac{i+(i-1)k}{n+(n-1)k})$ for any $i = 1,\,2,\, ... ,\,n$.

  If $g$ is a  $k$-regular refinement of $f$ for some $k$ it will sometimes be referred to as just a \emph{regular refinement}, without specifying for which $k$.
\end{definition}

\begin{definition}
  Given a discrete profile $f$ on $n$ agents, the \emph{canonical $k$-regular refinement} $f^{(k)}$ is the $k$-regular refinement of $f$ for which the sequence $\left( f^{(k)}(\frac{i + (i-1)k + j}{n + (n-1)k}) \right)_{j=0}^{k+1}$ constitutes an arithmetic progression for each $i = 1,\,2,\, ...,\, n-1$.
\end{definition}

In terms of the traditional model,
regularly refining a profile means adding some fixed number $k$ of new agents between every existing pair of consecutive agents.

The canonical $k$-regular refinements are those which, in some sense, are the most spread out. 
They represent a linear interpolation of the opinions in a discrete profile.

\begin{definition}\label{def:coarsening}
    For any function $g: [0,\,1] \to \mathbb{R}$, $n \geq 1$ and $k \geq 0$, we define the \emph{$k$-coarsening} of $g$, $B_k^n(g)$, as the discrete pre-profile on $n$ agents which satisfies $B_k^n(g)(\frac{i}{n}) =  g(\frac{i+(i-1)k}{n+(n-1)k})$ for each $i \in [n]$. When the $k$ is not specified, or otherwise where there is no risk of confusion, we will simply refer to \emph{coarsenings.}

    Further, we define the \emph{limit coarsening} of $g$, $B_\infty^n(g)$, as the discrete pre-profile on $n$ agents which satisfies $B_\infty^n(g)(\frac{i}{n}) = g(\frac{i-1}{n-1})$ for each $i \in [n]$.
\end{definition}

The way to think about Definition \ref{def:coarsening} is that, given any discrete profile $f$ on $n$ agents, the operator $B_k^n$ takes any $k$-regular refinement of $f$ and returns $f$. For instance, for any discrete profile $f$ on $n$ agents and any $k \geq 0$ we have that $B_k^n(f^{(k)}) = f$. Also note that a coarsening of a profile is always a profile.

As for the limit coarsening, it should be thought of mainly as an operator to use on \emph{regular} profiles, defined below. It can for instance be instructive to note that if we define $f^{(\infty)} = lim_{k \to \infty} f^{(k)}$ as the pointwise limit of the $k$-regular refinements of $f$, then $B_\infty^n(f^{(\infty)}) = f$.

For a reader familiar with signal processing, yet another way to view the concepts of refining and coarsening is to consider profiles as signals. The two then roughly correspond to (admittedly degenerate) upsampling and downsampling, respectively.

\begin{definition}
    For a positive real number $L$, we define the \emph{canonical linear profile of diameter $L$} by $f^L (\alpha) = L\alpha$.

    For $n \in \mathbb{N}$, the limit coarsening $B_\infty^n(f^L) := f^{n,L}$ will be called the \emph{canonical equally spaced profile on $n$ agents with diameter $L$}. Thus, $f^{n,L}$ consists of $n$ agents equally spaced on the interval $[0,\,L]$, i.e.: $f^{n,L}(i) = \frac{(i-1)L}{n}$ for $i = 1,\, ...,\, n$.
    
 We note that, for any $k \geq 0$, $(f^{n,L})^{(k)} = f^{n+(n-1)k,L}$.
\end{definition}

The following three definitions will be much employed throughout the whole paper. The third is an essential cornerstone in the theory we need to prove Theorem \ref{thm:intro1}.

\begin{definition}
  Given $c \in \mathbb{R}$, a profile $f$ is said to be \emph{symmetric about c} if $f(\alpha) + f(1 - \alpha) = c$ for almost every\footnote{That is, outside of a set of measure zero.} 
 $\alpha \in [0,\, 1]$.
\end{definition}
We do not require $f(\alpha) + f(1 - \alpha) = c$ \emph{everywhere}, as this would not allow us to speak of symmetric discrete profiles.
\begin{obs}\label{obs:symmetry}
If $f$ is symmetric about $c$, then so is $U^t f$ for any $t \geq 0$.
\end{obs}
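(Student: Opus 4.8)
The plan is to reduce to the case $t=1$ and then exploit the reflection symmetry of the continuum model directly. By Observation~\ref{obs:deriverbar}, each iterate $U^t f$ is again a profile, so once we know that $Uf$ is symmetric about $c$ whenever $f$ is, the general statement follows by induction on $t$, with the base case $t=0$ being the hypothesis. So I would focus on the single-step claim.

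To that end I would introduce the \emph{reflection operator} $R$ on profiles, $Rf(\alpha) := c - f(1-\alpha)$. Since $\alpha\mapsto 1-\alpha$ reverses the order on $[0,1]$ and $x\mapsto c-x$ reverses the order on $\mathbb{R}$, $R$ maps $\mathscr{O}$ into $\mathscr{O}$; it is an involution; and, crucially, ``$f$ is symmetric about $c$'' is by definition the same as ``$Rf=f$ almost everywhere''. The key lemma I would prove is that $U$ commutes with $R$ exactly, i.e.\ $U(Rf)=R(Uf)$ pointwise on $[0,1]$ for every profile $f$. For this, fix $f$ and an agent $\alpha$, and write $\rho(\beta)=1-\beta$. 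Unwinding the definitions gives $\mathcal{N}_\alpha(Rf)=\{\beta:|f(1-\alpha)-f(1-\beta)|\le 1\}=\rho\big(\mathcal{N}_{1-\alpha}(f)\big)$, and since $\rho$ is a measure-preserving bijection of $[0,1]$ this yields $\mu(\mathcal{N}_\alpha(Rf))=\mu(\mathcal{N}_{1-\alpha}(f))$. When this common measure is positive, the substitution $\gamma=1-\beta$ in the integral of Definition~\ref{def:update1} converts the average defining $U(Rf)(\alpha)$ into $\langle c-f(\gamma)\rangle_{\gamma\in\mathcal{N}_{1-\alpha}(f)}=c-Uf(1-\alpha)=R(Uf)(\alpha)$; when it is zero, both sides equal $c-f(1-\alpha)$ by the fall-through case of Definition~\ref{def:update1}. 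That proves the lemma.

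I would also record the elementary fact that $U$ is insensitive to modification on a null set: if profiles $g$ and $h$ agree outside a null set $Z$, then for every $\alpha\notin Z$ the neighbourhoods $\mathcal{N}_\alpha(g)$ and $\mathcal{N}_\alpha(h)$ differ only inside $Z$, hence have equal measure and equal integrals of $g$ (equivalently $h$) over them, so $Ug(\alpha)=Uh(\alpha)$ for all $\alpha\notin Z$. Combining the two ingredients: if $f$ is symmetric about $c$ then $Rf=f$ a.e., so $R(Uf)=U(Rf)=Uf$ a.e., which is precisely the statement that $Uf$ is symmetric about $c$; induction on $t$ then finishes the proof.

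I expect the only delicate point to be the bookkeeping around ``almost everywhere'' — this is exactly why the null-set fact is needed, and why in the commutation lemma one should note that $\mathcal{N}_\alpha(Rf)$, being the preimage of an interval under the monotone function $Rf$, is itself an interval, so that Definition~\ref{def:update1} applies verbatim. Beyond that the argument is just one change of variables plus a routine measure-theoretic remark, so I do not anticipate any real obstacle.
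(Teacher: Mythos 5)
Your argument is correct and complete: the reflection operator $R f(\alpha)=c-f(1-\alpha)$ commutes with $U$ by the change of variables $\gamma=1-\beta$, the null-set lemma handles the ``almost everywhere'' in the definition of symmetry, and induction on $t$ finishes it. The paper states this as an Observation without proof, and what you have written is exactly the routine verification it implicitly relies on, so there is nothing to add.
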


\begin{definition}
  The \emph{diameter} of a profile $f$ is defined as $D(f) = f(1) - f(0)$.
\end{definition}

\begin{definition}\label{def:regularity}
  Given a set $S \subseteq [0,\, 1]$, an injective function ${f:  S \to \mathbb{R}}$ is said to be \emph{regular on $S$}  if there exist strictly positive real numbers $m$ and $M$, such that
  \begin{equation*}
    m \leq \frac{|f(\alpha) - f(\beta)|}{|\alpha - \beta|} \leq M
  \end{equation*}
  for any distinct $\alpha,\, \beta \in S$.

  A function that is regular on the whole set $[0,\, 1]$ is simply called regular, and the term $(m,\,M)$-regular is used when there are specific numbers $m$ and $M$ which satisfy the above inequalities.
\end{definition}

We stress that these parameters $m$ and $M$ are not defined in the same way as in \cite{BHT2}. Our $m$ and $M$ correspond to $\frac{1}{M}$ and $\frac{1}{m}$ in \cite{BHT2}.

In this paper all regular functions will be non-decreasing. In this case, another way of phrasing the definition is that $m$ and $M$ act as lower and upper bounds, respectively, on the derivative $f'$ of $f$ wherever it is defined. Yet another way is to say that $M$ and $\frac{1}{m}$ are Lipschitz constants for $f$ and $f^{-1}$, respectively.

Evidently, a discrete profile cannot be regular, but any regular profile must instead be continuous. 

\begin{rem}
A reader might ask: ``In order to prove Theorem \ref{thm:intro1}, why don't you just compute explicit formulas for $U^t f^{n,L}$, alternatively $U^t f^L$, for general $t$, $n$ and $L$?'' The short answer is that, though it might be possible in principle, the calculations quickly become messy as $t$ increases.

Consider an arbitrary regular profile $f$ and an agent $\alpha$. If $f$ is differentiable at the three agents $\alpha$, $f^{-1}(\alpha - 1)$ and $f^{-1}(\alpha + 1)$, then $U f(\alpha)$ is also differentiable. If $f$ has a corner at exactly one of the three agents, however, $U f$ will have a corner at $\alpha$. Heuristically, each corner should have three opportunities, or two if its opinion is close to that of an extremist, to beget another corner. Counting the endpoints as corners, we conclude that the number of corners of $U^t f^L$ should lie between $2^t$ and $3^t$.

To even further complicate the matter, the expression on each piece quickly grows unmanageable as well, and already after a few updates it's nontrivial to write them in terms of elementary functions.

Similar remarks apply to the discrete profiles $U^t f^{n,L}$. In Appendix A we present formulas for $t = 1,\, 2$. By studying these formulas, we think it is clear that this is not a fruitful strategy for general $t$.
\end{rem}

A central topic in this text is that of \emph{random profiles}, by which we mean profile-valued random variables. These will be generated by drawing a number $n$ of opinions independently at random from some probability distribution, sorting them, and creating a profile with $n$ agents holding the drawn opinions. In working with these random profiles, we will use the following, very helpful, lemma:
\begin{lemma}[Glivenko--Cantelli (see for instance \cite{vdV1}, p 266)]\label{lem:glivcan}
  Let $F$ be the cumulative distribution function of some real valued random variable and let $F_n$ be the empirical distribution function for a sample of size $n$. Then
  \begin{equation}
    ||F_n - F||_\infty 
    \to 0 
  \end{equation}
  asymptotically almost surely (a.a.s.), i.e. almost surely when $n \to \infty$. 
\end{lemma}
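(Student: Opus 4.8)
The assertion is the classical Glivenko--Cantelli theorem, so the plan is simply to reproduce its standard proof. Let $X_1, X_2, \dots$ be the i.i.d.\ sample with common distribution function $F$, write $F_n(x) = \frac1n \sum_{i=1}^n \mathbf 1[X_i \le x]$ for the empirical distribution function, and write $F_n(x^-) = \frac1n \sum_{i=1}^n \mathbf 1[X_i < x]$ for its left-continuous version. The argument has two ingredients: pointwise almost sure convergence at finitely many carefully chosen points, obtained from the strong law of large numbers, and a monotonicity sandwich that upgrades this to uniform convergence.

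First I would fix $\varepsilon > 0$ and build a finite grid adapted to $F$. With $k = \lceil 1/\varepsilon \rceil$, set $t_j = \inf\{ x : F(x) \ge j/k \}$ for $j = 1, \dots, k-1$, together with $t_0 = -\infty$ and $t_k = +\infty$ (using $F(-\infty) = 0$, $F(+\infty) = 1$). Monotonicity and right-continuity of $F$ give $F(t_j^-) \le j/k \le F(t_j)$, and hence $F(t_j^-) - F(t_{j-1}) \le 1/k \le \varepsilon$ for every $j$. Next I would apply the strong law of large numbers to the indicator variables $\mathbf 1[X_i \le t_j]$ and $\mathbf 1[X_i < t_j]$ for the finitely many relevant $j$: on an event of probability one, $F_n(t_j) \to F(t_j)$ and $F_n(t_j^-) \to F(t_j^-)$ for all such $j$ simultaneously, so for all $n$ large enough both quantities are within $\varepsilon$ of their limits. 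Finally, for an arbitrary $x \in \mathbb R$ one locates the index $j$ with $t_{j-1} \le x < t_j$ and sandwiches, using monotonicity of $F_n$ and $F$ together with the grid estimate: $F_n(x) \le F_n(t_j^-) \le F(t_j^-) + \varepsilon \le F(t_{j-1}) + 2\varepsilon \le F(x) + 2\varepsilon$, and symmetrically $F_n(x) \ge F_n(t_{j-1}) \ge F(t_{j-1}) - \varepsilon \ge F(x) - 2\varepsilon$. Thus $\|F_n - F\|_\infty \le 2\varepsilon$ for all large $n$ on an event of probability one, and intersecting these events over $\varepsilon = 1/m$, $m \to \infty$, yields $\|F_n - F\|_\infty \to 0$ almost surely.

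There is essentially no hard step here; the only mild obstacle is the bookkeeping forced by possible jumps of $F$, which is why one must carry along both $F_n(t_j)$ and its left limit $F_n(t_j^-)$ and be slightly careful at the two ends of the grid. Since the result is standard, an equally legitimate option is simply to invoke the cited reference \cite{vdV1}; I record the sketch only for completeness.
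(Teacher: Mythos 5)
Your sketch is the standard quantile-grid proof of the Glivenko--Cantelli theorem (strong law at the finitely many points $t_j = \inf\{x : F(x) \ge j/k\}$, then the monotonicity sandwich using both $F_n(t_j)$ and $F_n(t_j^-)$), and it is correct, including the handling of jumps via the left limits and the degenerate endpoints $t_0=-\infty$, $t_k=+\infty$. The paper itself gives no proof of this lemma --- it simply cites van der Vaart, p.~266, where essentially this same argument appears --- so your write-up matches the intended source and nothing further is needed.
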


Note that, if the random variable in question is bounded, the quantile function given by
\begin{align*}
Q_F(\alpha) = &
\begin{cases}
\inf \{x: F(x) \geq \alpha\} & \text{if } \alpha > 0\\
\lim_{\alpha \to 0} Q_F(\alpha) & \text{if } \alpha = 0
\end{cases}
\end{align*}
is a profile. Further, the empirical quantile function for a sample of size $n$, given by
\begin{equation}\label{eq:quantile}
  f_n (\alpha) = Q_{F_n}(\alpha)
\end{equation}
  is a discrete profile with $n$ agents.

The following is immediate.

\begin{corollary}\label{cor:glivcan}
With $F$ as in Lemma \ref{lem:glivcan}, if $F$ is regular, then
\begin{equation}
||Q_F - f_n||_\infty \to 0
\end{equation}
a.a.s. as $n \to \infty$.
\end{corollary}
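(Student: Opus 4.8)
The goal is to deduce Corollary~\ref{cor:glivcan} from Lemma~\ref{lem:glivcan} (Glivenko--Cantelli), under the additional hypothesis that $F$ is regular. The plan is to relate the sup-norm distance between the quantile functions $Q_F$ and $f_n = Q_{F_n}$ to the sup-norm distance between the distribution functions $F$ and $F_n$, which Glivenko--Cantelli controls. The key point is that regularity of $F$ means precisely that $F$, viewed as a function on the relevant bounded interval of opinions, is bi-Lipschitz: if $F$ is $(m,M)$-regular then $\tfrac{1}{M}$ is a Lipschitz constant for $Q_F = F^{-1}$ (and $F$ itself has Lipschitz constant $M$, so in particular $F$ is strictly increasing and continuous, and $Q_F$ is genuinely its inverse).

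First I would fix $\varepsilon > 0$ and work on the realisation where $\|F_n - F\|_\infty \to 0$, which by Lemma~\ref{lem:glivcan} has probability one. Pick $N$ so that $\|F_n - F\|_\infty < m\varepsilon$ for all $n \geq N$, where $m$ is the lower regularity constant of $F$. Now take any $\alpha \in [0,1]$ and set $x = Q_F(\alpha)$, so that $F(x) = \alpha$ (using continuity and strict monotonicity of $F$). I would then show $|f_n(\alpha) - x| \leq \varepsilon$. The mechanism: since $|F_n(x) - F(x)| < m\varepsilon$, we have $F_n(x) > \alpha - m\varepsilon$ and $F_n(x) < \alpha + m\varepsilon$; by the regularity lower bound, moving $x$ by $\varepsilon$ to the right increases $F$ by at least $m\varepsilon$, hence $F(x+\varepsilon) \geq \alpha + m\varepsilon > F_n(x+\varepsilon) - |F_n(x+\varepsilon)-F(x+\varepsilon)|$... more cleanly: $F_n(x+\varepsilon) \geq F(x+\varepsilon) - m\varepsilon \geq \alpha$, so $Q_{F_n}(\alpha) \leq x + \varepsilon$; symmetrically $F_n(x-\varepsilon) \leq F(x-\varepsilon) + m\varepsilon \leq \alpha$ (with care at the boundary $\alpha=0$, handled by the definition of $Q_F$ as a limit), which combined with the definition of the quantile function gives $Q_{F_n}(\alpha) \geq x - \varepsilon$. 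Hence $|f_n(\alpha) - Q_F(\alpha)| \leq \varepsilon$ for all $\alpha$, i.e. $\|Q_F - f_n\|_\infty \leq \varepsilon$ for $n \geq N$.

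Since $\varepsilon$ was arbitrary, this gives $\|Q_F - f_n\|_\infty \to 0$ on a probability-one event, which is the claim. Alternatively, and perhaps more transparently, one can phrase the whole thing without $\varepsilon$: the regularity lower bound $m$ gives $|Q_F(\alpha) - Q_F(\alpha')| \leq \tfrac{1}{m}|\alpha - \alpha'|$, and a direct comparison argument shows $\|Q_{F_n} - Q_F\|_\infty \leq \tfrac{1}{m}\|F_n - F\|_\infty$ (possibly up to a boundary term that vanishes in the limit), after which the corollary is immediate from Lemma~\ref{lem:glivcan}. I would present it in whichever of these two forms reads most cleanly.

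The only genuine subtlety — the part deserving care rather than difficulty — is the behaviour at the endpoints $\alpha = 0$ and $\alpha = 1$, where the empirical quantile function can jump and where $Q_F$ is defined by a one-sided limit; here one uses that $F$ is continuous (a consequence of regularity) together with the fact that the opinions are drawn from a bounded distribution so that $Q_F$ is itself a profile, as noted just before the corollary. Everything else is a routine unwinding of the definitions of $Q_F$, $Q_{F_n}$, and the $(m,M)$-regularity inequality; no new ideas beyond Glivenko--Cantelli are needed, which is exactly why the statement is labelled a corollary.
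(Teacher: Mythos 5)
Your proof is correct, but it is genuinely different from what the paper does: the paper gives no argument at all, simply citing van der Vaart (\emph{Asymptotic Statistics}, p.~305) for the passage from uniform convergence of distribution functions to uniform convergence of quantile functions. Your self-contained route --- using the lower regularity constant $m$ of $F$ to get the quantitative bound $||Q_{F_n}-Q_F||_\infty \leq \frac{1}{m}||F_n-F||_\infty$ and then invoking Lemma~\ref{lem:glivcan} --- is exactly the right mechanism, and it buys something the citation does not: an explicit Lipschitz transfer of the Glivenko--Cantelli rate, which is in the spirit of the quantitative error bounds the paper develops elsewhere. Two small points of care, both of which you essentially flagged: (1) in the lower bound, $F_n(x-\varepsilon)\leq \alpha$ does \emph{not} by itself force $Q_{F_n}(\alpha)\geq x-\varepsilon$ (if $F_n(x-\varepsilon)=\alpha$ exactly, the infimum defining $Q_{F_n}(\alpha)$ could be attained at or below $x-\varepsilon$); you need the strict inequality $F_n(y)<\alpha$ for all $y<x-\varepsilon$, which your strict choice $||F_n-F||_\infty<m\varepsilon$ does deliver after unwinding, or alternatively you concede an extra $\varepsilon$ that vanishes in the limit; (2) at the right end of the support you should use that all $n$ sample points lie in the support almost surely, so $F_n\equiv 1$ beyond it and the regularity bound is not needed there. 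Neither affects the validity of the argument.
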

\begin{proof}
See for instance \cite{vdV1} p. 305.
\end{proof}

For any profile $f$ the set $\mathcal{N}_0(f) \cap \mathcal{N}_1(f)$ is an interval, which is non-empty if and only if $D(f) \leq 2$.

\newcommand{\regprime}{weakly regular\xspace}
\begin{definition}
  Let $f$ be a profile and suppose there exists a closed (possibly empty) subinterval $S$ of $\mathcal{N}_0(f) \cap \mathcal{N}_1(f)$  such that the following hold.
  \begin{enumerate}
  \item $f$ is $(m,\,M)$-regular on $[0,\,1] \setminus S$.
  \item $f$ is constant on $S$.
  \item If $D(f) \leq 2$, then neither endpoint of $\mathcal{N}_0(f) \cap \mathcal{N}_1(f)$ is an endpoint of $S$.
  \end{enumerate}
  Then $f$ is said to be $(m,\,M)$-\emph{\regprime}.
\end{definition}
\begin{prop}\label{lem:cont}
   The operator $U$ is continuous at any \regprime profile $f$, with respect to the norm $||\cdot||_\infty$. In particular, $U$ is continuous at any regular profile.
\end{prop}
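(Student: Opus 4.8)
The plan is to show that if $f_k \to f$ in $\|\cdot\|_\infty$, where $f$ is $(m,M)$-weakly regular with associated interval $S$, then $Uf_k \to Uf$ in $\|\cdot\|_\infty$. The source of discontinuity in $U$ is that the neighbourhood $\mathcal{N}_\alpha$ is defined by the non-strict inequality $|f(\beta)-f(\alpha)|\le 1$, so a small perturbation of $f$ can add or remove a positive-measure chunk of agents near the "edge" of a neighbourhood, producing a jump in the average. The whole point of weak regularity is to rule this out: where $f$ has slope at least $m>0$, the preimage $f^{-1}(\{x\})$ of any single value has measure zero, so the level sets of $f$ at the critical values $f(\alpha)\pm 1$ are small; and the constant piece $S$, where a positive-measure level set does occur, is forced by condition (iii) to sit strictly inside $\mathcal{N}_0(f)\cap\mathcal{N}_1(f)$, hence strictly inside \emph{every} neighbourhood $\mathcal{N}_\alpha(f)$, so it never lies at an edge and contributes stably.

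First I would fix $\alpha\in[0,1]$ and analyse $\mathcal{N}_\alpha(f_k)$ versus $\mathcal{N}_\alpha(f)$. Writing $\varepsilon_k = \|f_k - f\|_\infty$, every $\beta$ with $|f(\beta)-f(\alpha)| \le 1 - 2\varepsilon_k$ lies in $\mathcal{N}_\alpha(f_k)$, and every $\beta\in\mathcal{N}_\alpha(f_k)$ satisfies $|f(\beta)-f(\alpha)|\le 1+2\varepsilon_k$; so the symmetric difference $\mathcal{N}_\alpha(f_k)\,\triangle\,\mathcal{N}_\alpha(f)$ is contained in $\{\beta : 1-2\varepsilon_k \le |f(\beta)-f(\alpha)| \le 1+2\varepsilon_k\}$. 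On $[0,1]\setminus S$, $f$ is $(m,M)$-regular, so $|f(\beta)-f(\beta')| \ge m|\beta-\beta'|$ and this set, intersected with $[0,1]\setminus S$, has measure at most $\tfrac{2}{m}\cdot 4\varepsilon_k = O(\varepsilon_k)$. The part of that set lying in $S$ is handled separately: either $D(f)>2$, in which case no constraint from (iii) is needed and $S$ is a single level set that is either entirely in or entirely out of $\mathcal{N}_\alpha(f)$ for all $\alpha$ at once once $\varepsilon_k$ is small (since $f$ constant on $S$ means $|f(\beta)-f(\alpha)|$ is the same for all $\beta\in S$, and this common value is either $>1$ or $\le 1$ stably unless it equals $1$ exactly — one must check the borderline, but regularity off $S$ pins $\alpha$ away from making it exactly $1$ for a positive set of $\alpha$); or $D(f)\le 2$, in which case (iii) says $S$ is strictly interior to $\mathcal{N}_0(f)\cap\mathcal{N}_1(f)$, hence for every $\alpha$ we have $S\subseteq \mathcal{N}_\alpha(f)$ with $f$-values bounded away from the edge by a fixed gap $\delta>0$, so for $\varepsilon_k < \delta/2$, $S\subseteq \mathcal{N}_\alpha(f_k)$ as well and $S$ contributes to neither symmetric difference. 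In all cases $\mu\big(\mathcal{N}_\alpha(f_k)\,\triangle\,\mathcal{N}_\alpha(f)\big) = O(\varepsilon_k)$ uniformly in $\alpha$.

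Next I would control $\mu(\mathcal{N}_\alpha(f))$ from below, uniformly in $\alpha$: since $\alpha$ itself is always a neighbour and $f$ is $M$-Lipschitz, a whole interval around $\alpha$ of length $\ge 1/M$ (truncated at the endpoints of $[0,1]$, but $D(f)$ finite means $\mathcal N_\alpha$ always contains an interval of length bounded below by a constant $\eta>0$ depending only on $m,M,D(f)$) lies in $\mathcal{N}_\alpha(f)$. Once $\varepsilon_k$ is small enough that the symmetric difference has measure $< \eta/2$, we get $\mu(\mathcal{N}_\alpha(f_k)) \ge \eta/2 > 0$ for all $\alpha$, so the first (averaging) branch of the definition of $U$ applies to both $f$ and $f_k$ at every $\alpha$. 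Then the elementary estimate for averages over nearly-equal sets finishes it: writing $A = \mathcal{N}_\alpha(f)$, $A_k = \mathcal{N}_\alpha(f_k)$, and using that both $f$ and $f_k$ are bounded by some constant $R$ (as $\varepsilon_k\le 1$ and $f$ is bounded),
\[
  \bigl| Uf_k(\alpha) - Uf(\alpha)\bigr|
  \;\le\; \Bigl| \tfrac{1}{|A_k|}\!\int_{A_k}\! (f_k - f)\Bigr|
  \;+\; \Bigl| \tfrac{1}{|A_k|}\!\int_{A_k}\! f - \tfrac{1}{|A|}\!\int_{A}\! f\Bigr|
  \;\le\; \varepsilon_k \;+\; \frac{2R\,\mu(A_k\triangle A)}{\min(|A_k|,|A|)}
  \;=\; O(\varepsilon_k),
\]
with the constant uniform in $\alpha$. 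Taking the supremum over $\alpha$ gives $\|Uf_k - Uf\|_\infty = O(\varepsilon_k) \to 0$, which is the claim; continuity at regular $f$ is the special case $S=\emptyset$.

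The main obstacle is the borderline bookkeeping in the second step — precisely pinning down what happens to the level set $S$ (when $D(f)>2$) and, more delicately, to the agents whose $f$-value lands \emph{exactly} at distance $1$ from $f(\alpha)$. Regularity off $S$ ensures that for each fixed critical value the offending set of $\beta$'s has measure zero, but one must argue this is harmless \emph{uniformly} in $\alpha$ and in the perturbation direction; the clean way is the symmetric-difference bound above, which never needs to decide membership of a measure-zero boundary set because it only ever bounds measures of thin shells, and those shells shrink with $\varepsilon_k$ thanks to the lower slope bound $m$. Condition (iii) is exactly the hypothesis that prevents the constant piece $S$ from being such a shell sitting at a neighbourhood edge, which would otherwise break uniformity when $D(f)\le 2$.
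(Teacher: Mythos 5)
Your proposal is correct and follows essentially the same route as the paper's own proof: both arguments bound the measure of the symmetric difference $\mathcal{N}_\alpha(f)\,\triangle\,\mathcal{N}_\alpha(g)$ by $O(\varepsilon)$ using the lower regularity bound $m$ (with condition (iii) of weak regularity guaranteeing that the constant piece $S$ sits a fixed distance away from every neighbourhood edge and so never contributes), bound $\mu(\mathcal{N}_\alpha(f)\cap\mathcal{N}_\alpha(g))$ from below via the upper bound $M$, and finish with an elementary comparison of averages over nearly equal sets. The only differences are presentational (the paper splits the averages over the intersection plus correction terms and extracts the explicit constant $17M/m$, and it exploits that the relevant integrands are bounded by $2$ within a neighbourhood rather than by a global bound on $|f|$), so no further comment is needed.
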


\begin{taggedblock}{proofsprel}

\begin{proof}
This result was essentially proven as Proposition 4 in \cite{BHT2}, but in their formulation $f$ was assumed to be regular on all of $[0,\,1]$. We show that the proof goes through for this stronger formulation, which we will need later.

Let $f$ be a \regprime profile, with regularity bounds $m$ and $M$ on $[0,\, 1] \setminus S$.

Choose $0 < \delta \leq \frac{1}{4}$ such that, if $D(f) < 2$ then the distance between an endpoint of $\mathcal{N}_0(f) \cap \mathcal{N}_1(f)$ and an endpoint of $S$ is greater than $\frac{2\delta}{m}$.

We will show that, if $g$ is a profile such that $||f-g||_\infty \leq \delta$, then
\begin{align*}  
   ||Uf - Ug||_\infty \leq \frac{17M}{m}\delta.
\end{align*}

Fix such a profile $g$. Fix some agent $\alpha$ and define the following sets:
  \begin{align*}
    S_{fg} = & \mathcal{N}_\alpha(f) \cap \mathcal{N}_\alpha(g), \\
    S_{f \setminus g} = & \mathcal{N}_\alpha(f) \setminus \mathcal{N}_\alpha(g), \\
    S_{g \setminus f} = & \mathcal{N}_\alpha(g) \setminus \mathcal{N}_\alpha(f) .
  \end{align*}
  
  From Definition \ref{def:update1}, we get the following:

  \begin{align}
    Uf(\alpha) = & \langle f \rangle_{S_{fg}} + \frac{|S_{f \setminus g}|}{|S_{fg}| + |S_{f \setminus g}|} \left(\langle f \rangle_{S_{f \setminus g}} - \langle f \rangle_{S_{fg}} \right) \label{eq:contb} \\
    Ug(\alpha) = & \langle g \rangle_{S_{fg}} + \frac{|S_{g \setminus f}|}{|S_{fg}| + |S_{g \setminus f}|} \left(\langle g \rangle_{S_{g \setminus f}} - \langle g \rangle_{S_{fg}} \right). \label{eq:contc}
  \end{align}

  As both $S_{fg}$ and $S_{f \setminus g}$ are subsets of $\mathcal{N}_\alpha(f)$, the absolute value of the last parenthesis in \eqref{eq:contb} can be at most $2$, and the same holds for the parenthesis in \eqref{eq:contc}.
  Using the triangle inequality we get that
  \begin{equation}\label{eq:contd}
    |Uf(\alpha) - Ug(\alpha)|
    \leq
    |\langle f \rangle_{S_{fg}} - \langle g \rangle_{S_{fg}} |
    +
    2 \frac{|S_{g \setminus f}| + |S_{f \setminus g}|}{|S_{fg}|}
  \end{equation}

  Since $||f-g||_\infty \leq \delta$, it is obvious that $|\langle f \rangle_{S_{fg}} - \langle g \rangle_{S_{fg}} | \leq \delta$.

  We now note that the third condition for being \regprime and the definition of $\delta$ imply that neither of the sets $S_{g \setminus f}$ and $S_{f \setminus g}$ intersect $S$, and hence $f$ is regular on both. Thus $|S_{g \setminus f}|$ and $|S_{f \setminus g}|$ are each bounded by the measure of the set of agents which may be added to or removed from $\mathcal{N}_\alpha(f)$ by moving each agent at most $\delta$, which, by regularity, is at most $\frac{2\delta}{m}$. 

  Similarly, $S_{fg} \supseteq \{\beta : |f(\beta) - f(\alpha)| \leq 1 - 2\delta \}$. Hence $|S_{fg}| \geq \frac{1-2\delta}{M} \geq \frac{1}{2M}$, since $\delta \leq \frac{1}{4}$.

  The bounds from the previous paragraphs may be put into \eqref{eq:contd}, to give us
  \begin{equation*}
    |Uf(\alpha) - Ug(\alpha)|
    \leq
    \delta + \frac{16M \delta}{m}
    \leq
    17 \frac{M}{m} \delta.
  \end{equation*}
\end{proof}
\end{taggedblock}

\begin{lemma}\label{lem:derivatives}
  Let $f$ be a regular profile 
  and define 
  \begin{align*}
    u(\alpha) & =
    \begin{cases}
      0                  & \text{if $f(\alpha) \leq f(0)+1$}\\
      f^{-1}(f(\alpha)-1) & \text{otherwise,}
    \end{cases}\\
    v(\alpha) & =
    \begin{cases}
      1                  & \text{if $f(\alpha) \geq f(1)-1$}\\
      f^{-1}(f(\alpha)+1) & \text{otherwise,}
    \end{cases}\\
    w(\alpha) &= |\mathcal{N}_\alpha (f)| = v(\alpha) - u(\alpha).
  \end{align*}
  In words, $u(\alpha)$ and $v(\alpha)$ are the leftmost and rightmost agents, respectively, that interact with a given agent $\alpha$ when the profile is updated by $U$, and $w(\alpha)$ is the length of the set of neighbours of $\alpha$.

  Then the derivative of $Uf$, where it exists, is given by
  \begin{align}
    \frac{d}{d \alpha} Uf(\alpha) &= \frac{1}{w(\alpha)}
         ( u'(\alpha) \cdot \left(1 + Uf(\alpha) - f(\alpha) \right) +
           v'(\alpha) \cdot \left(1 + f(\alpha) - Uf(\alpha) \right) ),
  \end{align}
  where the primes denote derivatives.
\end{lemma}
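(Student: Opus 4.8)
The plan is to differentiate the identity $w(\alpha)\,Uf(\alpha)=F(\alpha)$, where $F(\alpha):=\int_{u(\alpha)}^{v(\alpha)}f(\beta)\,d\beta$, using the fundamental theorem of calculus, and then to solve for $(Uf)'$.

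First I would assemble the regularity facts that make the manipulation legitimate. Since $f$ is $(m,M)$-regular it is continuous, strictly increasing, $M$-Lipschitz, and $f^{-1}$ is $\tfrac1m$-Lipschitz; continuity and monotonicity give $\mathcal{N}_\alpha(f)=[u(\alpha),v(\alpha)]$ with $w(\alpha)=|\mathcal{N}_\alpha(f)|\ge\min(1/M,1)>0$ (a neighbourhood of $\alpha$ of radius $1/M$ is within sight of $\alpha$). Rewriting the defining cases as $u(\alpha)=f^{-1}\big(\max(f(0),f(\alpha)-1)\big)$ and $v(\alpha)=f^{-1}\big(\min(f(1),f(\alpha)+1)\big)$ exhibits $u$ and $v$ as non-decreasing and Lipschitz; hence so are $w=v-u$ and (as $f$ is bounded) $F$, and therefore $Uf=F/w$ is Lipschitz as well. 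In particular $u,v,w,Uf$ are all differentiable outside a common null set, and the formula is to be read as holding at every $\alpha$ at which $u$, $v$ and $Uf$ are differentiable.

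Next, the computation. With $G(x):=\int_0^x f(\beta)\,d\beta$ we have $G\in C^1$ and $G'=f$, so $F(\alpha)=G(v(\alpha))-G(u(\alpha))$ is differentiable wherever $u,v$ are, with $F'(\alpha)=f(v(\alpha))\,v'(\alpha)-f(u(\alpha))\,u'(\alpha)$, while $w'(\alpha)=v'(\alpha)-u'(\alpha)$. Differentiating $w\,Uf=F$ and solving for $(Uf)'$ gives
\[
(Uf)'(\alpha)=\frac{1}{w(\alpha)}\Big(v'(\alpha)\big(f(v(\alpha))-Uf(\alpha)\big)+u'(\alpha)\big(Uf(\alpha)-f(u(\alpha))\big)\Big).
\]
It remains to replace $f(v(\alpha))$ by $f(\alpha)+1$ and $f(u(\alpha))$ by $f(\alpha)-1$. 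Where $v(\alpha)<1$ the definition of $v$ gives $f(v(\alpha))=f(\alpha)+1$ outright; on the complementary set $\{v=1\}$, which is a closed sub-interval of $[0,1]$ on which $v$ is constant, we have $v'=0$ a.e., so the product $v'(\alpha)\big(f(v(\alpha))-Uf(\alpha)\big)$ agrees with $v'(\alpha)\big(1+f(\alpha)-Uf(\alpha)\big)$ a.e.\ in either case. Symmetrically, on $\{u=0\}$ the function $u$ is constant, hence $u'=0$ a.e.\ there, so $u'(\alpha)\big(Uf(\alpha)-f(u(\alpha))\big)=u'(\alpha)\big(1+Uf(\alpha)-f(\alpha)\big)$ a.e. Substituting and grouping the $u'$ and $v'$ terms yields exactly the stated identity.

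The only genuinely delicate point is this last bookkeeping at the ``pinned'' regions: one must be sure that $u'$ and $v'$ vanish a.e.\ on the plateaux $\{u=0\}$ and $\{v=1\}$, so that the two case-branches in the definitions of $u$ and $v$ can be glued without loss. This is immediate once one notes that these plateaux are intervals on which the respective functions are constant. Everything else is the fundamental theorem of calculus together with the product and chain rules applied to the (everywhere differentiable) functions $G$, $w$ and $Uf$.
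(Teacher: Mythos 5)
Your proof is correct. Note that the paper itself does not prove this lemma; it simply cites Lemma 2.5 of \cite{WH2} (with the remark that the hypothesis $D(f)>2$ there can be dropped), so your argument supplies the details rather than duplicating a proof given in the text. The route you take --- establishing that $u,v,w,Uf$ are Lipschitz via the regularity bounds, differentiating $w(\alpha)\,Uf(\alpha)=G(v(\alpha))-G(u(\alpha))$ with $G'=f$, and then gluing the two case-branches by observing that $u'$ and $v'$ vanish a.e.\ on the plateaux $\{u=0\}$ and $\{v=1\}$ --- is exactly the standard computation behind the cited result, and your handling of the one genuinely delicate point (the boundary of the pinned regions, a null set) is sound.
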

\begin{proof}
See Lemma 2.5 in \cite{WH2}. The statement of that lemma assumes $D(f) > 2$, but the proof goes through even without this assumption.
\end{proof}

The following lemma was proved for regular profiles in \cite{BHT2} using a different technique, yielding weaker regularity bounds than those given here.
\begin{prop}\label{prop:regularity}
Let $f$ be an $(m,\,M)$-\regprime profile.

Then $U f$ is constant on the closed interval $\mathcal{N}_0(f) \cap \mathcal{N}_1(f)$ and $(\frac{m}{2 M^2},\, 2\frac{M^2}{m})$-regular on $[0,\,1] \setminus (\mathcal{N}_0(f) \cap \mathcal{N}_1(f))$.

In particular, if $f$ is weakly regular then $Uf$ is either weakly regular or a consensus, and if $f$ is regular and $D(f) \geq 2$ then $Uf$ is regular.
\end{prop}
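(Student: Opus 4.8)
The plan is to reduce everything to the derivative formula in Lemma~\ref{lem:derivatives}, applied on the complement of $S^\ast := \mathcal{N}_0(f) \cap \mathcal{N}_1(f)$, and to control the three ingredients $w(\alpha)$, $u'(\alpha)$, $v'(\alpha)$ appearing there. First I would dispose of the two degenerate cases. If $D(f) > 2$ then $S^\ast = \emptyset$ and $f$ is honestly $(m,M)$-regular, so there is nothing to split off and the claim is exactly the regularity bound on all of $[0,1]$. If $D(f) \le 2$, note that every agent $\alpha \in S^\ast$ sees \emph{every} other agent (since $|f(\beta) - f(\alpha)| \le D(f) \le 2$ is not quite enough — one must use that $\alpha$ lies within $1$ of both endpoints, hence within $1$ of everyone by monotonicity), so $\mathcal{N}_\alpha(f) = [0,1]$ for all such $\alpha$ and $Uf(\alpha) = \langle f \rangle_{[0,1]}$ is the same constant for all $\alpha \in S^\ast$. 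That gives the first assertion.

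For the regularity bound on $[0,1] \setminus S^\ast$, fix $\alpha$ in that set and work from
\[
  \frac{d}{d\alpha} Uf(\alpha) = \frac{1}{w(\alpha)}\bigl( u'(\alpha)(1 + Uf(\alpha) - f(\alpha)) + v'(\alpha)(1 + f(\alpha) - Uf(\alpha)) \bigr).
\]
The two quantities $1 + Uf(\alpha) - f(\alpha)$ and $1 + f(\alpha) - Uf(\alpha)$ are each in $[0,2]$ because $Uf(\alpha)$, being an average of opinions within $1$ of $f(\alpha)$, differs from $f(\alpha)$ by at most $1$; and they sum to $2$. So the bracket is a convex-type combination of $u'$ and $v'$ scaled by a factor in $[0,2]$. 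Next, $u$ and $v$ are built from $f^{-1}$ composed with a translate of $f$ (where they are not pinned at $0$ or $1$); since $f$ is $(m,M)$-regular off $S$ and the weak-regularity hypothesis (condition (iii)) keeps the relevant preimages away from the flat part $S$ in the cases that matter, the chain rule gives $u'(\alpha), v'(\alpha) \in [\,m/M,\, M/m\,] \cup \{0\}$ wherever they are defined, with at least one of them nonzero whenever $\alpha \notin S^\ast$. Finally $w(\alpha) = |\mathcal{N}_\alpha(f)|$: the upper bound $w(\alpha) \le 1$ is trivial-ish but I actually need $w(\alpha) \le$ something and $w(\alpha) \ge$ something; regularity gives that a set of opinions of total $f$-length at most $2$ has preimage of measure at most $2/m$, so $w(\alpha) \le 2/m$, and since $\alpha \notin S^\ast$ it fails to see at least one endpoint, so its neighbourhood has $f$-length at least... here one uses that the opinion range seen is a nondegenerate interval and regularity again to get $w(\alpha) \ge 1/M$ (being slightly careful near the extremes). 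Assembling: $|\tfrac{d}{d\alpha}Uf| \le \tfrac{1}{1/M}\cdot 2 \cdot \tfrac{M}{m} = 2M^2/m$ for the upper bound, and for the lower bound, at least one of the two nonnegative terms is $\ge \tfrac{m}{M}\cdot(\text{something bounded below})$ while $w(\alpha) \le 2/m$, giving $\ge \tfrac{m}{2M^2}$ after bookkeeping — matching the stated constants $(\tfrac{m}{2M^2}, \tfrac{2M^2}{m})$.

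The last sentence then follows: if $f$ is weakly regular, $Uf$ is constant on $S^\ast$ and regular off it, and one checks $Uf$ is again weakly regular with the new flat interval contained in $\mathcal{N}_0(Uf) \cap \mathcal{N}_1(Uf)$ — \emph{unless} that flat interval has swallowed everything, i.e. $Uf$ is a consensus; and if $f$ is regular with $D(f) \ge 2$ then $S^\ast$ is empty or a single point, so $Uf$ is regular on $[0,1]$ outright. The main obstacle I anticipate is not the derivative computation but the careful case analysis of $u'$ and $v'$ and the lower bound on $w(\alpha)$ near the two extremists: when $f(\alpha)$ is within $1$ of $f(0)$ or $f(1)$ one of $u,v$ is locally constant, the chain rule degenerates, and one must verify both that the surviving term is genuinely bounded below and that condition (iii) of weak regularity is exactly what prevents $S$ from interfering — this is where the asymmetry between the ``regular'' and ``weakly regular'' hypotheses has to be handled with precision, and where the factor-of-$2$ losses in the stated constants come from.
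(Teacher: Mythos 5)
Your outline matches the paper's proof in overall architecture---both reduce to the derivative formula of Lemma~\ref{lem:derivatives} on the complement of $\mathcal{N}_0(f)\cap\mathcal{N}_1(f)$, bound $u'$ and $v'$ via the chain rule between $m/M$ and $M/m$, and use $w(\alpha)\geq 1/M$ for the upper bound---and your treatment of the constant part and of the upper regularity bound is essentially the paper's. But the lower bound has a genuine gap, and it sits exactly at the point you yourself flag as ``the main obstacle'': when only one of $u'(\alpha)$, $v'(\alpha)$ is nonzero (say $v'$), the surviving term is $\frac{1}{w(\alpha)}\,v'(\alpha)\bigl(1-(Uf(\alpha)-f(\alpha))\bigr)$, and without a quantitative lower bound on the factor $1-(Uf(\alpha)-f(\alpha))$ the whole expression could a priori be arbitrarily small, since $Uf(\alpha)-f(\alpha)$ can approach $1$. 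Your ``something bounded below \dots after bookkeeping'' does not supply this bound, and the stated constant $\frac{m}{2M^2}$ cannot be recovered without it. (Note also that the bound actually used here is the trivial $w(\alpha)\leq 1$, not your $w(\alpha)\leq 2/m$, which can in fact fail when the neighbourhood contains the flat set $S$.)

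The paper closes this gap with a separate extremal argument: it proves $Uf(\alpha)-f(\alpha)\leq 1-\frac{1}{2M}$ by comparing $f$ with the profile $g(\gamma)=\min(M\gamma,1)$, for which $Ug(0)=\frac{1}{2}\cdot\frac{1}{M}+\left(1-\frac{1}{M}\right)\cdot 1=1-\frac{1}{2M}$; this is where the factor $\frac{1}{2M}$, and hence the $2M^2$ in the denominator of the lower regularity constant, actually comes from. You would need to add this (or an equivalent) estimate to make the lower bound go through. The rest of your sketch---the case analysis at the extremes, the observation that $u'$ and $v'$ cannot both vanish off $\mathcal{N}_0(f)\cap\mathcal{N}_1(f)$, and the deduction of the ``in particular'' statement---is consistent with the paper.
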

\begin{taggedblock}{proofsprel}
\begin{proof}
First note that the second statement is a direct consequence of the first, so it suffices to prove the first statement.

It is clear that $Uf$ is constant on $\mathcal{N}_0(f) \cap \mathcal{N}_1(f)$, so for the rest of the proof we will assume  $D(f) > 1$ and only consider $\alpha \in [0,\,1] \setminus (\mathcal{N}_0(f) \cap \mathcal{N}_1(f)) := R$.

One readily verifies that almost everywhere differentiability on $R$ along with uniform upper and lower bounds on the derivative imply regularity with the same bounds. We prove the theorem by providing such bounds for $U f$.

  Since $f$ is monotone and regular on $R$, for almost every $\alpha \in R$ the derivatives $f'(\alpha)$, $f'(\alpha)$, $u'(\alpha)$, $u'(\alpha)$ and $w'(\alpha)$ all exist, by Lebesgue's theorem.
  
  For any such $\alpha$, by Lemma \ref{lem:derivatives},
  \begin{equation}\label{eq:regularitya}
    \frac{d}{d \alpha} Uf (\alpha)
    =\!\!
    \frac{1}{w(\alpha)} \!\!
    \left[
      u'(\alpha)\left( 1 + (Uf(\alpha) - f(\alpha))\right)
      +                                                   
      v'(\alpha)\left( 1 - (Uf(\alpha) - f(\alpha))\right)
      \right].
  \end{equation}

  We first prove the upper regularity bound for $Uf$.
  
  Applying the chain rule to $v$, we find that, if $f(\alpha) < f(1) - 1$ then
  \begin{align*}
    v'(\alpha) = \frac{d}{d\alpha} (f^{-1})(f(\alpha)+1) =
    \frac{f'(\alpha)}{f'(f(\alpha)+1)}
\leq \frac{M}{m},
\end{align*}
 where the inequality follows from the regularity bounds on $f$. If $\alpha$ is large enough for $v(\alpha)$ to be constantly $1$, the derivative is $0$, so the inequality remains true.
  In the same way we have $u'(\alpha) \leq \frac{M}{m}$.

As we assume $D(f) > 1$, $w(\alpha) \geq \frac{1}{M}$.

  We also have the trivial bound $|Uf(\alpha) - f(\alpha)| \leq 1$. Note also that the two parentheses 
$\left( 1 + (Uf(\alpha) - f(\alpha))\right)$ and 
$\left( 1 - (Uf(\alpha) - f(\alpha))\right)$ 
in \eqref{eq:regularitya} sum to $2$.

  Together, the observations from the previous paragraphs may be inserted into \eqref{eq:regularitya} to get that
  \begin{equation*}
    \frac{d}{d \alpha} Uf (\alpha) \leq M \left( 2 \frac{M}{m} \right) = 2 \frac{M^2}{m}.
  \end{equation*}


  As for the lower regularity bound, we first note the trivial bound $w(\alpha) \leq 1$ is the best we can do.

  Second, we note that, as we assume $\alpha \not\in \mathcal{N}_0(f) \cap \mathcal{N}_1(f)$, we cannot have $u'(\alpha)=v'(\alpha)=0$. We will here assume $v'(\alpha) \neq 0$, and note that the other case is completely analogous. Using the chain rule, as above, we see that
\begin{equation*}
  v'(\alpha) =
  \frac{d}{d\alpha} (f^{-1})(f(\alpha) + 1)=
  \frac{f'(\alpha)}{f'(f(\alpha)+1)}
  \geq
  \frac{m}{M}.
\end{equation*}

To finish the proof, it is enough to prove that
\begin{align}\label{eq:regularityd}
Uf(\alpha) - f(\alpha) \leq 1 - \frac{1}{2M}.
\end{align}

  Intuitively, to make $Uf(\alpha)-f(\alpha)$ as large as possible, we want to pack as many neighbours of $\alpha$ as far to the right as possible, while having as few neighbours as possible in the rest of the neighbourhood. 
  
  To formalise this, consider the profile $g$ such that
  \begin{equation}\label{eq:g}
    g(\gamma)=
    \begin{cases}
      M\gamma & \text{if } 0 \leq \gamma \leq \frac{1}{M}\\
      1 & \text{otherwise.}
    \end{cases}
  \end{equation}
  
  The assumptions on $f$ imply that
\begin{align}
Uf(\alpha) - f(\alpha) \leq Ug(0) - g(0) =
        Ug(0) = \frac{\frac{1}{2}\frac{1}{M} + \left(1 - \frac{1}{M} \right)\cdot 1}{1} = 1 - \frac{1}{2M},
\end{align}
which finishes the proof.
\end{proof}
\end{taggedblock}

\begin{rem}\label{rem:regularity}
For regular profiles, including any \regprime profile with diameter above 2, the lower regularity bound of Proposition \ref{prop:regularity} could be improved by exchanging the constant segment of the auxiliary profile $g$ in \eqref{eq:g} by a segment of slope $m$. We content ourselves with the current version as the extended proof is technical and the improvement is slight. When iterated, either version of the proposition results in the quotient $\frac{M_t}{m_t}$ asymptotically growing like $e^{\Theta (3^t)}$.
As these results will not be used we leave out the proofs.
\end{rem}

We are now ready to prove part (i) of Theorem \ref{thm:intro1} and deduce Corollary \ref{cor:intro2} from Theorem \ref{thm:intro1}.

\begin{proof}
For the deduction of Corollary \ref{cor:intro2}, it clearly suffices to prove the statement about the numbers $L_i'$.

Fix $L \in [0,\, L_1'] \cup \{ 6 \}$ and let $T$, as in Theorem \ref{thm:intro1},  be an upper bound on the freezing time for equally spaced profiles with diameter $L$.

Recall that $f^{n,L}$ denotes the canonical equally spaced profile on $n$ agents with diameter $L$ and that $f^L$ denotes the canonical linear profile with diameter $L$. Let $f_n^L$ denote the empirical quantile function (see \eqref{eq:quantile}) of a sample of size $n$ from the uniform distribution on $[0,\,L]$. The results in this section then give the following chain of implications:

{~}

\begin{tabular}{>{$}c<{$}  >{$}r<{$}  @{ } l}
 & U^T f^{n,L} & is a consensus for all $n \gg 0$\\
\stackrel{\ref{lem:cont},\, \ref{prop:regularity}}{\implies}
& D(U^{T} f^L) &  $< 1$      \\
\implies 
& U^{T+1} f^L & is a consensus    \\
\stackrel{\ref{lem:cont},\, \ref{prop:regularity}}{\implies}  & \exists \varepsilon > 0 : D(U^{T+1} (g     )) & $< 1$ if  $||f^L - g||_\infty < \varepsilon$.
\end{tabular}

~

From this last statement we deduce in turn the following.
\begin{itemize}
\item On the one hand, if $L'$ is sufficiently close to $L$, then $U^{T+2} f^{L'}$ is a consensus and, by a further application of Propositions \ref{lem:cont} and \ref{prop:regularity}, $U^{T+3} f^{n,L'}$ is a consensus for all $n$ sufficiently large. This proves part (i) of Theorem \ref{thm:intro1}.
\item On the other hand, by Corollary \ref{cor:glivcan}, $U^{T+2} f_n^L$ is a consensus a.a.s. This proves that Corollary \ref{cor:intro2} follows from Theorem \ref{thm:intro1}.
\end{itemize}
\end{proof}

By observing the proof just presented, it is clear that for some fixed $L$ the following three statements are equivalent:
\begin{enumerate}
\item There is some $T_1$ such that $U^{T_1} f^{n,L}$ is a consensus for all $n \gg 0$.
\item There is some $T_2$ such that $U^{T_2} f^{L}$ is a consensus.
\item There is some $T_3$ such that $U^{T_3} f_n^L$ is a consensus a.a.s. as $n \to \infty$.
\end{enumerate}

If we would have access to unlimited computing power, the theory developed this far would actually be enough to finish the proof of Theorem \ref{thm:intro1} in a few lines using the following strategy:

  Choose a really large $n$ so that $||f^{n,L} - f^L||_\infty < \delta$ for some $\delta$. Using Propositions \ref{lem:cont} and \ref{prop:regularity} we can compute constants $K_t$ for every $t\geq 0$ such that $||U^t f^{n,L} - U^t f^L||_\infty < K_t \delta$. We calculate the updates $U^t f^{n,L}$ explicitly. If we find that $U^{t_1} f^{n,L}$ is a consensus we check that $K_{t_1} \delta \leq \frac{1}{2}$, which must be true if $n$ is chosen large enough. We could then deduce that $U^{t_1+1} f^L$ is a consensus as well, and the rest would follow as above.

The problem with this strategy is that, as was hinted at in Remark \ref{rem:regularity}, the constant $K_t$ grows ridiculously fast as $t$ increases, so we would end up with $n$ needing to be much larger than can actually be simulated.
 Table 1 illustrates this.
\begin{table}\label{table:defs}
\begin{center}
\begin{tabular}{ c | c | c | c}
  $t$ & $m_t$ & $M_t$ & $K_t$\\
  \hline			
  0 & $1                  $& $ 1$               & 1\\        
  1 & $0.25               $& $ 2$               & 40\\                
  2 & $4.1 \cdot 10^{-3}  $ & $32$               & $3.9 \cdot 10^{5}$\\             
  3 & $2.9 \cdot 10^{-9}  $ & $5.0 \cdot 10^{5}$  & $8.6 \cdot 10^{14}$\\
  4 & $6.4 \cdot 10^{-31} $ & $1.7 \cdot 10^{20}$ & $1.4 \cdot 10^{51}$\\
  5 & $7.2  \cdot 10^{-107}$ & $9.4 \cdot 10^{70}$ & $6.5 \cdot 10^{177}$\\
  6 & $7.7  \cdot 10^{-373}$ & $2.4\cdot 10^{248}  $& $1.6  \cdot 10^{621}$\\
\end{tabular}
\end{center}
\caption{The constants grow too fast to be of practical use.}
\end{table}

To get around this, we introduce a fourth statement.
\begin{enumerate}
\item[(iv)] There is some $T_4$ such that $U^{T_4} f^{n_i,L}$ is a consensus for some infinite sequence $n_1 < n_2 < n_3 < ...$.
\end{enumerate}
It is straightforward to check that this is also equivalent to the earlier three, and we will devote Sections 4 and 5 to prove (iv). The following corollary of Proposition \ref{prop:regularity} will be used in both sections.

\begin{corollary}\label{cor:regularity}
  Let $f$ be a symmetric regular profile. If $D(U^{t_1} f) \leq 2$ for some $t_1$, then $U^{t_2} f$ is a consensus for some $t_2$. Moreover, $t_2$ depends only on $t_1$ and the regularity bounds for $f$.
\end{corollary}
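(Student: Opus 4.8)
The plan is to deduce this from Proposition \ref{prop:regularity} together with Observation \ref{obs:symmetry}, by tracking what happens once the diameter first drops to $2$ or below. Suppose $f$ is symmetric and $(m,M)$-regular and that $D(U^{t_1}f) \le 2$. Set $g = U^{t_1}f$. By Observation \ref{obs:symmetry}, $g$ is symmetric about the same centre $c$ as $f$; by iterating Proposition \ref{prop:regularity} $t_1$ times, $g$ is $(m',M')$-regular (or already a consensus, in which case we are done), where $m'$ and $M'$ depend only on $m$, $M$ and $t_1$ — explicitly $m' \ge m/(2M^2)^{\cdots}$ after $t_1$ steps, and symmetrically for $M'$. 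The point of reducing to this situation is that symmetry pins the centre of the interval $\mathcal{N}_0(g) \cap \mathcal{N}_1(g)$ at $c = g(1/2)$: since $g$ is symmetric and regular (hence continuous and strictly increasing), $g(1/2) = c/2 \cdot 2 = $ the midpoint of $[g(0),g(1)]$, so agent $1/2$ always lies in $\mathcal{N}_0(g) \cap \mathcal{N}_1(g)$ whenever $D(g) \le 2$.

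The key claim is then: if $h$ is a symmetric $(\hat m, \hat M)$-regular profile with $D(h) \le 2$, then $D(Uh) \le 2 - \eta$ for some $\eta = \eta(\hat m, \hat M) > 0$, unless $Uh$ is already a consensus. Granting this, the diameter after time $t_1$ decreases by at least a fixed positive amount $\eta$ at each step (the regularity bounds only get worse in a controlled way, or rather one can bound them uniformly over the finitely many relevant steps, or note that smaller diameter does not hurt the argument), so after at most $\lceil 2/\eta \rceil$ further updates the diameter drops below some threshold — in fact once $D \le 2$ it is cleanest to argue that after finitely many steps $D < 1$, at which point the next update is a consensus. To prove the claim: by Proposition \ref{prop:regularity}, $Uh$ is constant on $I := \mathcal{N}_0(h) \cap \mathcal{N}_1(h)$, and by symmetry $1/2 \in I$, so $I$ is a nondegenerate interval $[1/2 - \rho, 1/2 + \rho]$ with $\rho > 0$; moreover $\rho$ is bounded below in terms of $\hat m$ and $D(h)$, since $I = \{\beta : |h(\beta) - h(0)| \le 1 \text{ and } |h(\beta) - h(1)| \le 1\}$ and regularity controls how fast $h$ moves. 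On $[0,1]\setminus I$ the profile $Uh$ is $(\hat m/(2\hat M^2), 2\hat M^2/\hat m)$-regular. Agents $0$ and $1$ each move strictly inward by at least a definite amount because each averages over a neighbourhood that includes the whole of $I$ (which sits strictly to one side of the extremes once $D(h)$ is bounded away from $0$), giving $D(Uh) = Uh(1) - Uh(0) \le D(h) - \eta$ with $\eta$ controlled by $\rho$, $\hat m$, $\hat M$. One must also handle the degenerate endgame where $D(h)$ is already tiny: if $D(h) \le 1$ then $Uh$ is a consensus outright and there is nothing to prove.

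The quantitative dependence in the final sentence of the statement is then automatic: $t_1$ determines the regularity bounds of $g = U^{t_1}f$ (via $t_1$ applications of Proposition \ref{prop:regularity} to the $(m,M)$-regular profile $f$), these bounds determine $\eta$, and $t_2 = t_1 + \lceil 2/\eta \rceil + O(1)$ — or more carefully, one iterates the claim, noting that each application worsens the regularity constants by a bounded factor and the number of iterations needed is itself bounded, so the whole recursion terminates in a number of steps depending only on $t_1$, $m$, $M$. No dependence on the diameter of $f$ or on anything else enters.

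\textbf{Main obstacle.} The delicate point is the uniform lower bound $\eta > 0$ on the per-step decrease of the diameter, and in particular making sure it does not degrade to $0$ as $D(h) \to 0$: one needs to see that the relevant comparison is really "$Uh$ is a consensus or $D(Uh) \le D(h) - \eta$", i.e. that the bad case is exactly consensus and not a slow asymptotic approach. This is where symmetry is doing real work — it keeps the frozen middle cluster $I$ genuinely in the interior and of controlled size, so the two extreme agents always feel a definite inward pull — and the argument has to be arranged so that the constants track through $t_1$ applications of Proposition \ref{prop:regularity} cleanly. A secondary nuisance is bookkeeping the regularity bounds across the $\lceil 2/\eta\rceil$ post-$t_1$ steps; this is routine since it is a fixed finite number of steps, but it needs to be said.
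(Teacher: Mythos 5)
Your mechanism is the right one and matches the paper's: symmetry forces $U^{t}f$ to be constant, equal to the centre of symmetry, on the interval $I=\mathcal{N}_0\cap\mathcal{N}_1$ once the diameter is below $2$, and this central cluster pulls both extremists inward at a definite rate until the diameter drops below $1$. However, there is a genuine gap in how you make the per-step pull $\eta$ uniform over the post-$t_1$ steps, and it is exactly the point you dismiss as "routine bookkeeping". You derive $\eta$ from the regularity bounds $(\hat m,\hat M)$ of the \emph{current} profile, so $\eta$ degrades at every step; by Proposition \ref{prop:regularity} and Remark \ref{rem:regularity} the ratio $M_t/m_t$ grows like $e^{\Theta(3^t)}$, so the per-step gains $\eta_t$ form a convergent series and need never accumulate to the required total decrease of order $1$. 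Your proposed repairs are either circular ("bound the regularity constants uniformly over the finitely many relevant steps" --- but the number of steps is $\lceil 2/\eta\rceil$, which itself depends on those constants) or left undeveloped ("smaller diameter does not hurt").

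The paper's proof closes this loop by never invoking regularity after the first time $t_3$ at which the diameter is strictly below $2$. At that moment it fixes the interval $I=\mathcal{N}_0(U^{t_3}f)\cap\mathcal{N}_1(U^{t_3}f)$, of measure at least $(2-D(U^{t_3}f))/M_{t_3}>0$, and observes that by symmetry $U^tf$ is identically equal to the centre value on this \emph{same} subset of $[0,1]$ for \emph{every} $t>t_3$. Since the diameter is non-increasing and stays at most $2$, each extremist continues to see all of $I$, and as long as the diameter exceeds $1$ it is pulled inward by at least $|I|/2$ per step --- a bound that is constant in $t$ because it uses only the persistence of the frozen cluster, not the (deteriorating) regularity of later updates. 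If you reorganise your Step so that $\rho$ and the cluster are fixed once at time $t_3$ and the pull is measured against that fixed cluster, your argument goes through; you should also handle the boundary case $D(U^{t_1}f)=2$ exactly (where $I$ is a single point), which the paper does by passing to $t_3=t_1+1$. Finally, note that your closing claim that "no dependence on the diameter enters" needs a word of justification, since the bound on the number of steps involves $2-D(U^{t_3}f)$; this quantity must itself be bounded below in terms of $t_1$ and the regularity constants.
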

\begin{proof}
Without loss of generality, suppose $f$ is symmetric about 0. By Observation \ref{obs:symmetry}, the same is true of $U^t f$ for any $t \geq 0$.

If $D(f) \geq 2$ then Proposition \ref{prop:regularity} tells us that $U f$ is regular. By iterating this we see that either $D(U^t f) > 2$ for all $t$, in which case we are done, or there is some first time $t_1$ such that $D(U^{t_1} f) \leq 2$, in which case $U^{t_1} f$ is regular.

Set $t_3 =t_1$ if $D(U^{t_1} f) < 2$ or $t_3 = t_1 + 1$ if $D(U^{t_1} f) = 2$.

Then, by Proposition \ref{prop:regularity}, $U^{t_3} f$ is still regular and, clearly, \newline $D(U^{t_3}f)<~2$.

By regularity and Proposition \ref{prop:regularity}, there is some $M_{t_3}$, depending only on $t_3$ and the regularity bounds for $f$, such that
\begin{equation*}
| \mathcal{N}_0(U^{t_3} f) \cap \mathcal{N}_1(U^{t_3} f) | \geq \frac{2 - D(U^{t_3} f)}{M_{t_3}} > 0.
\end{equation*}
By symmetry, $U^{t_3+1} f$ must be constantly equal to 0 on $\mathcal{N}_0(U^{t_3} f) \cap \mathcal{N}_1(U^{t_3} f)$. In fact, it is easy to see that $U^t f$ must be constantly equal to 0 on this interval for \emph{any} $t > t_3$. Hence one can check that, as long as the diameter is above 1, each extremist must change its opinion by at least $ \frac{2 - D(U^{t_3} f)}{2 M_{t_3}}$ at each time step. Thus the diameter must be at most 1 after at most $\frac{2 M_{t_3}}{2 - D(U^{t_3} f)}$ additional time steps.

This finishes the proof with $t_2 = t_3 + \frac{2 M_{t_3}}{2 - D(U^{t_3} f)} + 1$.

\end{proof}

\section{Propagation of errors due to refinements}

In the previous section we investigated the updating operator $U$ and, in particular, we noted that it is continuous at regular profiles. As we saw, the continuity by itself is not very helpful. In this section, we will shift our focus away from regular profiles back to discrete ones. Specifically, we will investigate the updates of profiles that have been perturbed, by movement or refinement, and derive bounds for the difference between these and the updates of the unperturbed profiles. All the profiles in this section are discrete.

As we have seen, by definition, if $f$ is a profile with $n$ agents, and $g$ is a $k$-regular refinement of $f$, we have $f = B_k^n(g)$, and thus $Uf = U B_k^n(g)$. We will begin by comparing $U B_k^n(g)$ to $B_k^n(Ug)$. Thus, one could, informally, say that the following lemma bounds the commutator of the two operators $U$ and $B_k^n$.

\begin{lemma}\label{lem:ghostagents}
  Let $f$ be a discrete profile with $n$ agents, and let $g$ be any $k$-regular refinement of $f$. For any agent $i \in [n]$
  \begin{equation}
    | U f(i) - B_k^n(Ug)(i)| \leq \frac{2}{\#(\mathcal{N}_i(f))}.
  \end{equation}
\end{lemma}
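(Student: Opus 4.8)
The plan is to compare, for a fixed agent $i\in[n]$, the update $Uf(i)$ — which averages $f$ over the neighbourhood $\mathcal N_i(f)$ — with $B_k^n(Ug)(i)$, which by definition of the coarsening equals $Ug$ evaluated at the agent of $g$ that corresponds to $i$; call that agent $i^*$. So we must bound $|Uf(i) - Ug(i^*)|$. Since $g$ is a $k$-regular refinement of $f$, we have $g(i^*) = f(i)$, so the opinion of $i$ in $f$ agrees with the opinion of $i^*$ in $g$. The first step is therefore to understand how $\mathcal N_{i^*}(g)$ relates to $\mathcal N_i(f)$: an agent $j$ of $f$ with $|f(j)-f(i)|\le 1$ corresponds to a block of $1+k$ agents of $g$, all of which (together with the interpolating agents between consecutive such $j$'s) lie within sight of $i^*$; conversely any agent of $g$ within sight of $i^*$ lies in such a block or among the interpolated agents between two blocks. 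The key point is that $\mathcal N_{i^*}(g)$ consists of a union of the "refined blocks" coming from $\mathcal N_i(f)$, plus possibly some extra interpolated agents near the two ends of the neighbourhood (the agents lying strictly between the last in-sight original agent and the first out-of-sight one).

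The second step is a bookkeeping computation of the average. Write $N = \#(\mathcal N_i(f))$. The refined block corresponding to each $j\in\mathcal N_i(f)$ contributes, in the $g$-average, a weight proportional to its length, and because $g$ interpolates linearly (or at least monotonically between the prescribed values) the contribution of each such block, relativised appropriately, is close to $f(j)$; summing and dividing, the "bulk" part of $Ug(i^*)$ is a weighted average of the $f(j)$, $j\in\mathcal N_i(f)$, which one compares to the uniform average $Uf(i) = \frac1N\sum_{j\in\mathcal N_i(f)} f(j)$. The discrepancy comes from (a) the weights on the interior blocks not being exactly uniform — but for a $k$-regular refinement each original agent of $f$ corresponds to an equal-length sub-block of $g$, so the interior weights actually are uniform, and the only non-uniformity is at the two boundary blocks and from the at most $2$ "partial" interpolation segments at the ends — and (b) those boundary/interpolation agents. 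Each of these error sources is confined to $O(1)$ original-agent-widths out of $N$ of them, and the opinions involved all lie within an interval of length at most $2$ (the diameter of a neighbourhood), which is exactly where the factor $2$ and the denominator $\#(\mathcal N_i(f))$ come from; carefully accounting shows the total error is at most $\tfrac{2}{N}$.

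The third step is to handle the degenerate case $\mu(\mathcal N_{i^*}(g))$ or the corresponding quantities being zero, i.e.\ when the "otherwise" branch of Definition \ref{def:update1} is triggered; here $Uf(i)=f(i)=g(i^*)=Ug(i^*)$ and the bound is trivial, so we may assume positive measure throughout. The main obstacle I expect is the careful boundary analysis in step two: precisely controlling the at-most-two interpolated "partial neighbours" at each end of $\mathcal N_{i^*}(g)$ and checking that, even though they can have weight comparable to one original-agent-width and opinions differing from $f(i)$ by up to $1$, their net effect on the average — after subtracting the matching effect on $|\mathcal N_i(f)|$ versus $|\mathcal N_{i^*}(g)|$ in the denominator — telescopes into a single clean $\tfrac{2}{N}$ rather than something like $\tfrac{C}{N}$ with a worse constant. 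Getting the constant to be exactly $2$ will require writing $Uf(i)-Ug(i^*)$ as a difference of two averages over nested sets, in the style of \eqref{eq:contb}–\eqref{eq:contd} from the proof of Proposition \ref{lem:cont}, and bounding the symmetric difference of the two neighbourhoods by the length of at most one original agent on each side, i.e.\ by $\tfrac{2}{n}$ relative to a total length of at least $\tfrac{N}{n}$.
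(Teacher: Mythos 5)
Your plan rests on the claim, in step two, that the ``bulk'' of $Ug(i^*)$ --- the contribution of the refined blocks coming from $\mathcal N_i(f)$ --- reproduces $Uf(i)$ up to errors confined to the two ends of the neighbourhood. That premise is false for a general $k$-regular refinement, and this is a genuine gap. Write $N=\#(\mathcal N_i(f))$ and let $j_1<\dots<j_N$ be the neighbours of $i$ in $f$. The $k$ inserted agents in the gap between $j_l$ and $j_{l+1}$ are only constrained to lie in $[f(j_l),\,f(j_{l+1})]$, so they may \emph{all} sit at $f(j_{l+1})$, in every interior gap simultaneously (they all remain in sight of $i^*$, since $f(j_{l+1})\le f(i)+1$). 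Ignoring all boundary agents and letting $k\to\infty$, the resulting average tends to $\frac{1}{N-1}\sum_{l=2}^{N}f(j_l)$, which differs from $Uf(i)=\frac1N\sum_{l=1}^N f(j_l)$ by $\frac{Uf(i)-f(j_1)}{N-1}$; taking $f(j_1)=f(i)-1$ and the remaining neighbours at $f(i)+1$ makes this equal to $\frac{2}{N}-\frac{1}{N(N-1)}$. So the systematic skew of the \emph{interior} inserted agents within their gaps already exhausts essentially the entire error budget $\frac{2}{N}$, and it is not a boundary phenomenon. If you then add, as a separate term, the genuinely boundary contribution (up to $k$ agents from the gap beyond $j_N$ entering the neighbourhood near opinion $f(i)+1$), a naive sum of the two sources overshoots $\frac{2}{N}$. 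Your own closing paragraph flags the constant as the main obstacle, but the difficulty is not where you locate it: the two effects must be analysed \emph{jointly}, because the configuration that is extremal for both at once is exactly ``remove the weight of $j_1$ and re-deposit it at $f(i)+1$'', i.e.\ a single one of the $N$ original opinions moved a distance at most $2$.

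That joint extremal reduction is precisely the paper's proof: for fixed $i$ it constructs the worst-case refinement $g^*$ (all inserted opinions pushed to the right ends of their gaps, those in the gap containing $f(i)+1$ placed exactly there, those in the gap just left of $j_1$ pushed out of sight), observes that $B_k^n(Ug^*)(i)$ increases in $k$ towards $Uf_i^+(i)$ where $f_i^+$ is $f$ with its leftmost $i$-neighbour moved to $f(i)+1$, and concludes since moving one of at least $\#(\mathcal N_i(f))$ opinions by at most $2$ changes the average by at most $\frac{2}{\#(\mathcal N_i(f))}$. Your bookkeeping could be repaired by reorganising it around this observation (equivalently, by bounding the total-variation distance between the uniform weights on $f(j_1),\dots,f(j_N)$ and the limiting reweighting induced by the refinement, which is $\frac1N$, against an opinion spread of $2$), but as written the decomposition into ``uniform interior plus boundary corrections'' does not yield the stated constant. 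Two minor points: your step three is vacuous, since $i\in\mathcal N_i(f)$ always, so the degenerate branch of Definition \ref{def:update1} never arises for discrete profiles; and the analogy with \eqref{eq:contb}--\eqref{eq:contd} is of limited use here because $\|f-g\|_\infty$ is not small for a general refinement, so the nested-sets estimate of Proposition \ref{lem:cont} cannot be invoked quantitatively.
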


\begin{taggedblock}{proofslemmas}
  \begin{proof}
  Fix an agent $i \in [n]$. We will proceed by constructing a $k$-regular refinement $g^*$ of $f$ which maximises $B_k^n(Ug^*)(i)$, in the sense that $B_k^n(Ug)(i) \leq B_k^n(Ug^*)(i)$ for \emph{any} $k$-regular refinement $g$ of $f$.
  
  It is clear that, to maximise $B_k^n(Ug^*)(i)$, one may simply begin with $f$ and place all inserted opinions at the rightmost end of their interval, except for those in the interval containing the opinion $f(i) + 1$ who are placed there, and those in the interval immediately to the left of the leftmost neighbour of $i$ who are placed out of sight, i.e. below $f(i)-1$. We observe that, following this procedure, $B_k^n (U g^*) (i)$ is increasing with $k$ and is bounded by what is obtained if one changes the opinion of the leftmost neighbour of $i$ to $f(i) + 1$ before updating $f$.
  In other words,
  \begin{equation*}
    B_k^n (U g)(i) 
    \leq U f^+_i(i),
  \end{equation*}
  where $f^+_i$ denotes the profile obtained from taking $f$ and moving the leftmost neighbour of $i$ to $f(i)+1$.

  Now, note that moving one out of at least $n_i := \#(\mathcal{N}_i(f))$ opinions a distance at most $2$ cannot affect the updated opinion by more than $\frac{2}{{{n}}_i}$.

  We finally note that the reasoning is completely analogous for finding a lower bound for $B_k^n (Ug) (i)$.
  \end{proof}
\end{taggedblock}

The difference $e = g - f$ between two discrete profiles $f$ and $g$ with the same number $n$ of agents is a pre-profile with $n$ agents. On the other hand, for an arbitrary pre-profile $e$, $f+e$ need not be a profile. 
 In what follows we will use the term \emph{deviation} instead of pre-profile when thinking in terms of $e$ as a small perturbation of a given profile $f$. We will adopt the same shorthand for deviations as for profiles, and write $e(i)$ instead of $e \left(\frac{i}{n}\right)$ when there is no risk of confusion.

\begin{definition}  
  A deviation $e$ is called \emph{consistent} with respect to a discrete profile $f$ on $n$ agents if $f+e$ is a profile, 
  i.e. if
  \begin{equation*}
    f(i-1) + e(i-1) \leq f(i) + e(i) 
  \end{equation*}
  for all $i \in 2,\, ...,\, n-1$.
  
  For a deviation $e$ on $n$ agents, we will refer to positive deviations $e_l$ and $e_r$ on $n$ agents as \emph{left} and \emph{right bounds} on $e$, respectively, if they satisfy
  \begin{equation*}
    f(i) - e_l(i) \leq f(i) + e(i) \leq f(i) + e_r(i)
  \end{equation*}
  for any $i \in [n]$.
\end{definition}

For any profile $f$ on $n$ agents and any $k,\, t\in \mathbb{N}$ we get
\begin{align*}
  B_k^n(U^t f^{(k)}) = U^t f + e_k^t
\end{align*}
where $e_k^t$ is clearly a consistent deviation. By Lemma \ref{lem:ghostagents}, $||e_k^1||_\infty$ is uniformly bounded in $k$. If we want to iteratively obtain bounds for $e_k^t$, we need to compare $Ug$ to $U(g+e)$ for generic $g$ and $e$. We will not aim for bounds in  $|| \cdot ||_\infty$, instead our bounds will depend on the agent $i$. However, the bounds will still be uniform in $k$ for each $t$, which is the crucial point.

 Adding a deviation to a profile may cause the neighbourhoods of its agents to change and we start by introducing some notation to handle these changes.

\begin{definition}
  Given a profile $f$ on $n$ agents and a deviation with bounds $e_l$ and $e_r$, we define the sets
  \mathtoolsset{showonlyrefs=false} 
  \begin{align}
    \vec{\mathcal{N}}^+_i(f) =& \{ j > i : f(i)+1 < f(j) \leq f(i)+1 + e_r(i) + e_l(j) \}\label{eq:narrowfirst} \\
    \vec{\mathcal{N}}^-_i(f) =& \{ j < i : f(i)-1 \leq f(j) < f(i)-1 + e_r(i) + e_l(j) \} \\
    \cev{\mathcal{N}}^+_i(f) =& \{ j < i : f(i)-1 - e_l(i) - e_r(j) \leq f(i) < f(i)-1 \} \\
    \cev{\mathcal{N}}^-_i(f) =& \{ j > i : f(i)+1 - e_l(i) - e_r(j) < f(j) \leq f(i)+1 \}.\label{eq:narrowlast} 
  \end{align}
  \mathtoolsset{showonlyrefs=true} 

  In words, $\vec{\mathcal{N}}_i^+(f)$ contains precisely the agents that may be added on the right side of the neighbourhood of $i$ by a perturbation bounded by $e_l$ and $e_r$, and $\vec{\mathcal{N}}_i^-(f)$ contains those that may be removed on the left. The right arrow above the $\mathcal{N}$ indicates that the average opinion of $i$'s neighbours is increased. The sets denoted with left arrows are defined analogously.
\end{definition}

\begin{lemma}\label{thm:devprop}
  Let $f$ be a profile with $n$ agents, and let $e$ be a consistent deviation bounded by some $e_l$ and $e_r$. Then
  \begin{align}\label{eq:devprop}
    U(f+e)(i) \leq &
    \langle f(j) + e_r(j) \rangle_{j \in \mathcal{N}_i(f) \setminus \vec{\mathcal{N}}^-_i(f)} +
    \frac{2 |\vec{\mathcal{N}}^+_i(f)|}{|\mathcal{N}_i(f)| + |\vec{\mathcal{N}}^+_i(f)| - |\vec{\mathcal{N}}^-_i(f)|} ,\\
    U(f+e)(i) \geq &
    \langle f(j) - e_l(j) \rangle_{j \in \mathcal{N}_i(f) \setminus \cev{\mathcal{N}}^-_i(f)} -
    \frac{2 |\cev{\mathcal{N}}^+_i(f)|}{|\mathcal{N}_i(f)| + |\cev{\mathcal{N}}^+_i(f)| - |\cev{\mathcal{N}}^-_i(f)|} .
  \end{align}
\end{lemma}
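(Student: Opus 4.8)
The plan is to prove the upper bound in \eqref{eq:devprop}; the lower bound then follows by the symmetric argument (replace $e$ by the reflected/negated deviation, or simply run the same computation with left arrows in place of right arrows). Fix an agent $i$ and write $g = f+e$. The core idea is to compare the neighbourhood $\mathcal{N}_i(g)$ of $i$ in the perturbed profile with the neighbourhood $\mathcal{N}_i(f)$ in the original one, using the bounds $e_l$, $e_r$ to control which agents can enter or leave. Concretely, since $|g(j)-g(i)| \le 1$ is the membership condition for $\mathcal{N}_i(g)$, and since $|e(i)| \le \max(e_l(i),e_r(i))$ and similarly for $j$, an agent $j > i$ can only be newly present in $\mathcal{N}_i(g)$ (i.e.\ in $\mathcal{N}_i(g)\setminus\mathcal{N}_i(f)$) if $f(i)+1 < f(j) \le f(i)+1+e_r(i)+e_l(j)$, which is exactly the defining condition for $j \in \vec{\mathcal{N}}^+_i(f)$; and an agent $j < i$ with $f(j) \ge f(i)-1$ can only be \emph{absent} from $\mathcal{N}_i(g)$ if $f(i)-1 \le f(j) < f(i)-1+e_r(i)+e_l(j)$, i.e.\ $j \in \vec{\mathcal{N}}^-_i(f)$. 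Hence
\begin{align*}
  \mathcal{N}_i(f) \setminus \vec{\mathcal{N}}^-_i(f) \subseteq \mathcal{N}_i(g) \subseteq \left(\mathcal{N}_i(f) \setminus \vec{\mathcal{N}}^-_i(f)\right) \cup \vec{\mathcal{N}}^+_i(f),
\end{align*}
with the last union disjoint. (One must be mildly careful with the boundary cases where $f(j)$ equals $f(i) \pm 1$ exactly, and with agents on the left that are already out of sight of $i$ under $f$; consistency of $e$ guarantees the order of agents is preserved so no agent can ``jump over'' $i$.)

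Next I would run the same decomposition of an average over a union that was used in the proof of Proposition~\ref{lem:cont} (equations \eqref{eq:contb}--\eqref{eq:contc}). Write $A = \mathcal{N}_i(f) \setminus \vec{\mathcal{N}}^-_i(f)$ and $P = \vec{\mathcal{N}}^+_i(f)$, so $\mathcal{N}_i(g) = A \cup P'$ for some $P' \subseteq P$, with $A$ and $P'$ disjoint. Then
\begin{align*}
  Ug(i) = \langle g(j)\rangle_{j \in A} + \frac{|P'|}{|A|+|P'|}\left(\langle g(j)\rangle_{j \in P'} - \langle g(j)\rangle_{j \in A}\right).
\end{align*}
Now $g(j) = f(j)+e(j) \le f(j)+e_r(j)$ on $A$, giving $\langle g(j)\rangle_{j\in A} \le \langle f(j)+e_r(j)\rangle_{j \in A}$, which is the first term of the claimed bound. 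For the correction term: the parenthesis $\langle g\rangle_{P'} - \langle g\rangle_A$ is a difference of two averages of values of $g$ that all lie within the neighbourhood of $i$ under $g$, so its absolute value is at most $2$ (the diameter of $\mathcal{N}_i(g)$'s opinion spread); and $|P'| \le |P| = |\vec{\mathcal{N}}^+_i(f)|$ while $|A|+|P'| = |\mathcal{N}_i(g)| \ge |A| = |\mathcal{N}_i(f)| - |\vec{\mathcal{N}}^-_i(f)|$, so the fraction $\frac{|P'|}{|A|+|P'|}$ is at most $\frac{|\vec{\mathcal{N}}^+_i(f)|}{|\mathcal{N}_i(f)| + |\vec{\mathcal{N}}^+_i(f)| - |\vec{\mathcal{N}}^-_i(f)|}$. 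Here I would note the function $x \mapsto x/(c+x)$ is increasing in $x$ for $c>0$, and that $c := |\mathcal{N}_i(f)| - |\vec{\mathcal{N}}^-_i(f)| = |A| > 0$ since $i \in A$. Combining gives the upper bound \eqref{eq:devprop}.

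The main obstacle — or at least the point requiring the most care — is the set-inclusion bookkeeping in the first step: precisely verifying that the only agents that can enter $\mathcal{N}_i(g)$ from the right are those in $\vec{\mathcal{N}}^+_i(f)$ and the only ones that can leave from the left are those in $\vec{\mathcal{N}}^-_i(f)$, while agents to the left of $i$ that were \emph{already} too far left under $f$ stay out, and no agent can cross from one side of $i$ to the other. This is where consistency of $e$ is essential (it preserves the non-decreasing order, hence the left/right relationship of each $j$ to $i$), and where the exact placement of the non-strict versus strict inequalities in the definitions \eqref{eq:narrowfirst}--\eqref{eq:narrowlast} has to be matched against the non-strict inequality $|g(j)-g(i)| \le 1$ defining $\mathcal{N}_i(g)$. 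Once the inclusions are pinned down, the averaging estimate is essentially a repeat of the Proposition~\ref{lem:cont} computation and should go through routinely. I would also remark that the case $\mathcal{N}_i(g) = \mathcal{N}_i(f)$, or $g$ having a degenerate (measure-zero in the continuous picture, but here just: possibly with $\mu(\mathcal{N}_i)=0$) neighbourhood, is handled trivially since for discrete profiles $i$ always sees itself.
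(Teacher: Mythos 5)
Your final averaging computation would be fine if the set-theoretic step it rests on were correct, but that step is false, and this is a genuine gap rather than a matter of ``mild care''. The sets $\vec{\mathcal{N}}^{\pm}_i(f)$ consist of agents that \emph{may} be added or removed by \emph{some} admissible deviation, not of agents that \emph{are} added or removed by the given $e$. Consequently neither inclusion in
\begin{align*}
\mathcal{N}_i(f)\setminus\vec{\mathcal{N}}^-_i(f)\ \subseteq\ \mathcal{N}_i(f+e)\ \subseteq\ \bigl(\mathcal{N}_i(f)\setminus\vec{\mathcal{N}}^-_i(f)\bigr)\cup\vec{\mathcal{N}}^+_i(f)
\end{align*}
holds in general. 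The left inclusion fails because an agent $j>i$ with $f(j)$ just below $f(i)+1$ can be pushed out of sight of $i$ (these are exactly the agents of $\cev{\mathcal{N}}^-_i(f)$, which lie in $\mathcal{N}_i(f)\setminus\vec{\mathcal{N}}^-_i(f)$); your bookkeeping only rules out departures on the \emph{left}. The right inclusion fails both because agents of $\cev{\mathcal{N}}^+_i(f)$ (to the left of $i$ and out of sight under $f$) can enter, and because an agent of $\vec{\mathcal{N}}^-_i(f)$ need not actually leave. Hence the identity $\mathcal{N}_i(f+e)=A\cup P'$, and the two-term average decomposition built on it, do not describe the actual neighbourhood, and your displayed expression for $U(f+e)(i)$ is not equal to $U(f+e)(i)$ in general.

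The stated inequality is nevertheless true, because every effect you ignore pushes $U(f+e)(i)$ \emph{down}: a departing right-end neighbour lies above the average of the remaining neighbours, an entering far-left agent lies below it, and a retained leftmost neighbour cannot exceed the average of the rest. Turning these monotonicity claims into a proof is precisely the missing step, and it is the heart of the paper's argument: one constructs an explicit extremal deviation $e^*$ (all of $\vec{\mathcal{N}}^-_i(f)$ pushed out of sight, all of $\vec{\mathcal{N}}^+_i(f)$ placed at opinion $f(i)+e_r(i)+1$, every other member of $\mathcal{N}_i(f)$ pushed to its right bound but capped at that value) and argues, using that consistency preserves the order of the agents, that $U(f+e)(i)\le U(f+e^*)(i)$ for every admissible $e$. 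Only for $e^*$ does a decomposition of the kind you wrote apply, and the proof is then closed by the estimate $f(i)+e_r(i)+1\le \bar f+2$ with $\bar f=\langle f(j)+e_r(j)\rangle_{j\in\mathcal{N}_i(f)\setminus\vec{\mathcal{N}}^-_i(f)}$, which follows from the definition of $\vec{\mathcal{N}}^-_i(f)$. To repair your write-up you must either prove this extremality claim, or carry out the decomposition with all four sets $\vec{\mathcal{N}}^{\pm}_i(f)$ and $\cev{\mathcal{N}}^{\pm}_i(f)$ present and verify that the contributions of the left-arrow sets have the favourable sign.
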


\begin{taggedblock}{proofslemmas}
  \begin{proof}
    As the proofs of the two inequalities are completely analogous, we only treat the first.
    
  Fix a profile $f$, bounds $e_l$ and $e_r$, and an agent $i \in [n]$.

  We will bound the maximum $\max_e U(f+e)(i)$ over all consistent deviations $e$ within the given bounds by constructing a ``worst case scenario'' $e^*$, where all agents work to push the opinion of $i$ as far to the right as possible.

  Before continuing we recall that, since we assume consistency, $e$ does not change the order of the agents. This is important: If the leftmost neighbour of $i$ is not moved out of sight of $i$ it remains the leftmost neighbour, and thus its opinion cannot exceed the average opinion of the remaining neighbours. We thus want any left-removable neighbour to be removed, and each addable neighbour to the right to indeed end up so.

  We thus proceed as follows.
  \begin{itemize}
  \item For each agent $j \in \vec{\mathcal{N}}^-_i(f)$, let $e^*(j) = e_l(j)$.
  \item For each agent $j \in \vec{\mathcal{N}}^+_i(f)$, choose $e^*(j)$ so that $f(j) + e^*(j) = f(i) + e_r(i) + 1$. 
  \item For each agent $j \in \mathcal{N}_i(f) \setminus \vec{\mathcal{N}}^-_i(f)$, 
    choose $e^*(j)$ so that
    \begin{align*}
      f(j) + e^*(j) = \min \{ f(j) + e_r(j),\, f(i) + e_r(i) + 1 \}.
    \end{align*}
  \item For each other $j$, choose $e^*(j)$ so that $j$ ends up out of sight of $i$.
  \end{itemize}
  
  For a deviation $e^*$ constructed in this way we have
  \begin{align*}
    \pushleft{U(f+e^*) (i) = } &
    \\ \frac{\langle f(j) + e^*(j)\rangle_{\mathcal{N}_i(f) \setminus \vec{\mathcal{N}}^-_i(f)} (|\mathcal{N}_i(f)| - | \vec{\mathcal{N}}^-_i(f)|) + (f(i) + e_r(i) + 1) | \vec{\mathcal{N}}^+_i(f) |
    }{ |\mathcal{N}_i(f)| + |\vec{\mathcal{N}}^+_i(f)| - |\vec{\mathcal{N}}^-_i(f)| } & \leq \\
    \frac{\langle f(j) + e_r(j)\rangle_{\mathcal{N}_i(f) \setminus \vec{\mathcal{N}}^-_i(f)} (|\mathcal{N}_i(f)| - | \vec{\mathcal{N}}^-_i(f)|) + (f(i) + e_r(i) + 1) | \vec{\mathcal{N}}^+_i(f) |
}{ |\mathcal{N}_i(f)| + |\vec{\mathcal{N}}^+_i(f)| - |\vec{\mathcal{N}}^-_i(f)| }. &
  \end{align*}
  Let $\bar{f} = \langle f(j) + e_r(j)\rangle_{j \in \mathcal{N}_i(f) \setminus \vec{\mathcal{N}}^-_i(f)}$, and note that, by definition of the set $\vec{\mathcal{N}}^-_i(f)$, for any $j \in \mathcal{N}_i(f) \setminus \vec{\mathcal{N}}^-_i(f)$ one has
  \begin{align*}
    f(i) + e_r(i) \leq 1 + f(j) - e_l(j) \leq 1 + f(j) + e_r(j).
  \end{align*}
  Hence $f(i) + e_r(i) + 1 \leq \bar{f} + 2$ and thus
  \begin{align*}
    U(f+e^*)(i) &\leq
    \frac{\bar{f}(|\mathcal{N}_i(f) \setminus \vec{\mathcal{N}}_i^-(f)|) + \left(\bar{f} +2 \right) |\vec{\mathcal{N}}_i^+(f)|
    }{
      |\mathcal{N}_i(f)| + |\vec{\mathcal{N}}_i^+(f)| - |\vec{\mathcal{N}}_i^-(f)|}\\
    & = \bar{f} + \frac{2 |\vec{\mathcal{N}}_i^+(f)|
    }{
      |\mathcal{N}_i(f)| + |\vec{\mathcal{N}}_i^+(f)| - |\vec{\mathcal{N}}_i^-(f)|},
  \end{align*}
  which proves the inequality \eqref{eq:devprop}.
\end{proof}
\end{taggedblock}

\begin{theorem}\label{cor:devprop}
  Let $f$ be a discrete profile with $n$ agents, and let $e = e^0$ be a consistent deviation of $f$ with right and left bounds $e_r^0$ and $e_l^0$.

  Now, let $g$ be a $k$-regular refinement of $f$, and let $e'$ be a consistent deviation of $g$ such that $B_k^n(e') = e$.

  Then, bounds for the deviation $e^T = B_k^n(U^T(g+e'))-U^Tf$ may be found iteratively by defining
  \begin{align}
    e_r^{t+1}(i) = &
    \langle U^tf(j) + e_r^t(j) \rangle_{j \in \mathcal{N}_i(U^tf) \setminus \vec{\mathcal{N}}^-_i(U^tf)} +\\
    & + \frac{2 |\vec{\mathcal{N}}^+_i(U^tf)|}{|\mathcal{N}_i(U^tf)| + |\vec{\mathcal{N}}^+_i(U^tf)| - |\vec{\mathcal{N}}^-_i(U^tf)|} 
     - U^{t+1} f(i) + \frac{2}{{{n}}_i^t}\\ \label{eq:rightbound}
    &\\
    e_l^{t+1}(i) = & 
    - \langle U^tf(j) - e_l^t(j) \rangle_{j \in \mathcal{N}_i(U^tf) \setminus \cev{\mathcal{N}}^-_i(U^tf)} -\\
    & {} - \frac{2 |\cev{\mathcal{N}}^+_i(U^tf)|}{|\mathcal{N}_i(U^tf)| + |\cev{\mathcal{N}}^+_i(U^tf)| - |\cev{\mathcal{N}}^-_i(U^tf)|} + U^{t+1} f(i) + \frac{2}{{{n}}_i^t},\\ \label{eq:leftbound}
  \end{align}

  where all sets $\cev{\mathcal{N}}^+_i(U^tf)$, $\cev{\mathcal{N}}^-_i(U^tf)$, $\vec{\mathcal{N}}^+_i(U^tf)$ and $\vec{\mathcal{N}}^-_i(U^tf)$
  are defined as in \eqref{eq:narrowfirst}--\eqref{eq:narrowlast} using the bounds $e_l^t$ and $e_r^t$,
  and
  \begin{equation*}
  {{n}}_i^t = |\mathcal{N}_i(U^t f)| - |\vec{\mathcal{N}}^-_i(U^t f)| - |\cev{\mathcal{N}}^-_i(U^t f)|
  \end{equation*}
  is a lower bound on the number of neighbours of an agent in
    ${B_k^n (U^t (g+e'))}$.  
\end{theorem}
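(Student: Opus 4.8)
The plan is to establish, by induction on $t \ge 0$, the claim that the positive deviations $e_l^t$ and $e_r^t$ generated by the recursion \eqref{eq:rightbound}--\eqref{eq:leftbound} are, respectively, left and right bounds on the deviation $e^t := B_k^n\bigl(U^t(g+e')\bigr) - U^t f$; the statement of the theorem is then the case $t = T$. Two preliminary remarks. First, for every $t$ the deviation $e^t$ is automatically \emph{consistent} with respect to $U^t f$, since $B_k^n(U^t(g+e'))$ and $U^t f$ are both profiles (a coarsening of a profile is a profile, and $U$ maps $\mathscr{O}_n$ into itself), so $U^t f + e^t = B_k^n(U^t(g+e')) \in \mathscr{O}_n$. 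Second, for the base case $t=0$, the additivity of $B_k^n$ in agent values gives $B_k^n(g+e') = B_k^n(g) + B_k^n(e') = f + e$, so $e^0 = e$, which is bounded by $e_l^0, e_r^0$ by hypothesis.

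For the inductive step, fix an agent $i$ and write $h := U^t(g+e')$. Since $g$ is a discrete profile on $n+(n-1)k$ agents and $e'$ a consistent deviation, $g+e'$ — hence $h$ — is a discrete profile on $n+(n-1)k$ agents, and the defining identity of the coarsening operator says precisely that $h$ is a $k$-regular refinement of the discrete profile $B_k^n(h) = U^t f + e^t$. Applying Lemma \ref{lem:ghostagents} to this pair gives
\[ \bigl|\, U(B_k^n h)(i) - B_k^n(U h)(i)\,\bigr| \;\le\; \frac{2}{\#\bigl(\mathcal{N}_i(B_k^n h)\bigr)}. \]
By the inductive hypothesis, passing from $U^t f$ to $B_k^n h = U^t f + e^t$ moves each agent within its $(e_l^t,e_r^t)$-bounds, so every neighbour of $i$ lost in this passage lies in $\vec{\mathcal{N}}^-_i(U^t f) \cup \cev{\mathcal{N}}^-_i(U^t f)$, a disjoint union (one set of indices $<i$, the other of indices $>i$). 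Hence $\mathcal{N}_i(B_k^n h) \supseteq \mathcal{N}_i(U^t f)\setminus\bigl(\vec{\mathcal{N}}^-_i(U^t f)\cup\cev{\mathcal{N}}^-_i(U^t f)\bigr)$, so $\#(\mathcal{N}_i(B_k^n h)) \ge n_i^t$ and the right-hand side above is at most $2/n_i^t$.

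It remains to control $U(B_k^n h)(i) = U(U^t f + e^t)(i)$, and this is exactly the quantity bounded by Lemma \ref{thm:devprop}, applied with the substitutions $f \mapsto U^t f$, $e \mapsto e^t$, $e_l \mapsto e_l^t$, $e_r \mapsto e_r^t$ — and the sets $\vec{\mathcal{N}}^\pm_i(U^t f)$, $\cev{\mathcal{N}}^\pm_i(U^t f)$ occurring there are, by construction, exactly the ones appearing in the recursion. Concatenating the first inequality of Lemma \ref{thm:devprop} with the commutator estimate of the previous paragraph, and using $B_k^n(U h) = B_k^n\bigl(U^{t+1}(g+e')\bigr)$, we get that $B_k^n(U^{t+1}(g+e'))(i)$ is at most $\langle U^t f(j) + e_r^t(j)\rangle_{j\in\mathcal{N}_i(U^t f)\setminus\vec{\mathcal{N}}^-_i(U^t f)} + \frac{2|\vec{\mathcal{N}}^+_i(U^t f)|}{|\mathcal{N}_i(U^t f)| + |\vec{\mathcal{N}}^+_i(U^t f)| - |\vec{\mathcal{N}}^-_i(U^t f)|} + \frac{2}{n_i^t}$; subtracting $U^{t+1}f(i)$ from both sides turns this into $e^{t+1}(i) \le e_r^{t+1}(i)$, which is \eqref{eq:rightbound}. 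The inequality $e^{t+1}(i) \ge -e_l^{t+1}(i)$, i.e.\ \eqref{eq:leftbound}, follows in exactly the same way from the second inequality of Lemma \ref{thm:devprop}, and the induction is complete.

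The whole argument is essentially bookkeeping once Lemmas \ref{lem:ghostagents} and \ref{thm:devprop} are in hand; the one place demanding genuine care is the double use of $B_k^n(h)$ in the inductive step — one must invoke Lemma \ref{lem:ghostagents} not for $U^t f$ itself but for the \emph{perturbed} discrete profile $U^t f + e^t$, of which $h$ is still a $k$-regular refinement — together with the verification that $n_i^t$ is a legitimate lower bound on $\#(\mathcal{N}_i(U^t f + e^t))$. That verification is exactly what makes the commutator term $2/n_i^t$ uniform in $k$, which is the whole point of the construction.
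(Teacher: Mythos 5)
Your proof is correct and follows exactly the route the paper intends: the paper's own proof consists of the single sentence that the result is ``an immediate application of Lemmas \ref{lem:ghostagents} and \ref{thm:devprop}'', and your induction on $t$ — applying Lemma \ref{lem:ghostagents} to the refinement pair $\bigl(U^t f + e^t,\, U^t(g+e')\bigr)$ and Lemma \ref{thm:devprop} to the consistent deviation $e^t$ — is precisely the bookkeeping being elided. Your explicit verification that $n_i^t$ lower-bounds $\#\bigl(\mathcal{N}_i(U^t f + e^t)\bigr)$, and that Lemma \ref{lem:ghostagents} must be invoked at the perturbed coarsening rather than at $U^t f$ itself, supplies detail the paper omits but does not change the argument.
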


\begin{taggedblock}{proofslemmas}
  \begin{proof}
    The proof is an immediate application of Lemmas \ref{lem:ghostagents} and \ref{thm:devprop}.
\end{proof}
\end{taggedblock}

\section{The case $L \leq 5.2$}
\newcommand{\diam}[2]{\text{span}_{#1}(#2)} 

In this section we will prove part of Theorem \ref{thm:intro1}. Because of the equivalences (i) and (ii) at the end of Section 2, it suffices to show that there exists a $T$ such that, for $L \in [0,\, 5.2]$, $U^T f^L$ is a consensus. 
    
  Fix $0 < \varepsilon < \frac{1}{2}$ and an odd number $n \geq 3$. Let $\left( f^{n,L_j} \right)_{j=1}^p$ denote a finite family of canonical equally spaced profiles, all with $n$ agents and increasing diameters $L_j$ chosen such that $L_{j+1} - L_j \leq 2\varepsilon$ for any $j \in [p-1]$. For each such $f^{n,L_j}$, define the following left and right deviation bounds:
\begin{align*}
  e_{l,j}^0(i) =
  e_{l}^0(i) &=
  \begin{cases}
    \frac{n + 1 - 2i}{n-1}\varepsilon & \text{if } 1 \leq i < \frac{n+1}{2} \\
    0  & \text{if } \frac{n+1}{2} \leq i \leq n
  \end{cases}\\
    e_{r,j}^0(i) = 
  e_r^0(i) &= 
  \begin{cases}
    0  & \text{if } 1 \leq i \leq \frac{n+1}{2}\\
    \frac{2i - n - 1}{n-1}\varepsilon & \text{if } \frac{n+1}{2} < i \leq n .
  \end{cases}
\end{align*}
Note that $e_l^0 (\frac{n+1}{2}) = e_r^0 (\frac{n+1}{2}) = 0$ and $e_l^0(1) = e_r^0(n) = \varepsilon$. 

\begin{prop}\label{thm:grid}
  With $\left (f^{n,L_j} \right)_{j=1}^p$, $e_{l,j}^0$ and $e_{r,j}^0$ defined as above, assume there is a time $T>0$ such that for each $j \in [p]$
  \begin{align} \label{eq:grid}
    D(U^{T} f^{n,L_j}) + e_{l,j}^{T}(1) + e_{r,j}^{T}(n) < 2,
  \end{align}
 where $e_{l,j}^{T}$ and $e_{r,j}^{T}$ are determined recursively as in Theorem \ref{cor:devprop}.
  
  Then there is a time $T'$ such that $U^{T'} f^L$ is a consensus for every $L \in [L_1,\, L_p]$.
\end{prop}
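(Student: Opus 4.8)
The plan is to combine the error-propagation machinery of Theorem \ref{cor:devprop} with the symmetry argument encapsulated in Corollary \ref{cor:regularity}, interpolating between the finitely many grid diameters $L_1 < \dots < L_p$. First I would fix an arbitrary $L \in [L_1, L_p]$ and locate the grid index $j$ with $|L - L_j| \le \varepsilon$ (which exists because consecutive grid diameters differ by at most $2\varepsilon$). The canonical linear profile $f^L$ is regular and symmetric about $L/2$, and by construction $\|f^{n,L_j} - f^L\|_\infty \le \varepsilon$ when the agents of $f^{n,L_j}$ are aligned with $f^L$; moreover the deviation bounds $e_{l,j}^0, e_{r,j}^0$ were chosen precisely so that $f^{n,L_j} + e$ can represent any coarsening $B_\infty^n$ of a profile within $\varepsilon$ of $f^L$ (the linear interpolation of the endpoint discrepancy). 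I would make this alignment precise: for a suitable regular refinement $g = (f^{n,L_j})^{(k)}$ with a consistent deviation $e'$ satisfying $B_k^n(e') = e^0$, the profile $g + e'$ coarsens down to $f^L$ in the limit, so that Theorem \ref{cor:devprop} gives, for every $t$, bounds $e_{l,j}^t, e_{r,j}^t$ with
\begin{equation*}
  U^t f^{n,L_j}(i) - e_{l,j}^t(i) \;\le\; B_k^n(U^t(g+e'))(i) \;\le\; U^t f^{n,L_j}(i) + e_{r,j}^t(i),
\end{equation*}
uniformly in $k$. Letting $k \to \infty$ and invoking Proposition \ref{lem:cont} (continuity of $U$ at regular profiles, and hence at $f^L$ and all its iterates, which stay regular by Proposition \ref{prop:regularity} as long as the diameter stays $\ge 2$), the middle quantity converges to $U^t f^L$ evaluated at the corresponding agent, so the same two-sided bound holds with $B_k^n(U^t(g+e'))(i)$ replaced by $U^t f^L$.

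Next I would extract the diameter control. Evaluating the bound at $i = 1$ and $i = n$ and using translation invariance (Observation \ref{obs:transinv}) together with the symmetry of $f^L$, I get
\begin{equation*}
  D(U^T f^L) \;\le\; D(U^T f^{n,L_j}) + e_{l,j}^T(1) + e_{r,j}^T(n) \;<\; 2
\end{equation*}
by hypothesis \eqref{eq:grid}. Thus for every $L \in [L_1, L_p]$ the regular symmetric profile $f^L$ satisfies $D(U^T f^L) \le 2$. Corollary \ref{cor:regularity} then applies: since $f^L$ is $(L, L)$-regular, there is a $t_2$, depending only on $T$ and the regularity bounds — and the regularity bounds are controlled uniformly over $L \in [L_1, L_p]$ since $L$ ranges in a compact interval bounded away from $0$ — such that $U^{t_2} f^L$ is a consensus. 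Taking $T'$ to be the supremum of these $t_2$ over the finitely many "regularity regimes" encountered (in fact a single $T'$ works, as the $t_2$ of Corollary \ref{cor:regularity} is monotone in the relevant parameters), we conclude $U^{T'} f^L$ is a consensus for every $L \in [L_1, L_p]$.

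The main obstacle I anticipate is the careful bookkeeping in the first step: making rigorous the claim that $g + e'$ with $B_k^n(e') = e^0$ really does coarsen, in the $k \to \infty$ limit, to $f^L$ rather than merely to something $\varepsilon$-close, and that the deviation bounds $e_{l,j}^0, e_{r,j}^0$ as defined (the piecewise-linear "fan" vanishing at the midpoint) are in fact \emph{consistent} with $f^{n,L_j}$ and large enough to dominate the true deviation $g - (\text{alignment of } f^L)$ for every $L$ in the window $[L_j - \varepsilon, L_j + \varepsilon]$. This requires checking that the endpoint gap $|L - L_j| \le \varepsilon$ propagates linearly inward along equally spaced agents — which is exactly why the canonical (linearly interpolating) refinement is used — and that consistency is preserved, i.e. $f^{n,L_j}(i-1) + e(i-1) \le f^{n,L_j}(i) + e(i)$, which follows from the spacing $L_j/n$ exceeding the per-step slack $2\varepsilon/(n-1)$ once $\varepsilon$ is small relative to the grid geometry. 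A secondary but routine point is to verify that the midpoint agent (using $n$ odd) stays fixed under the deviation, so that symmetry of the perturbed discrete profile is maintained and matches the symmetry of $f^L$; this is what licenses using Corollary \ref{cor:regularity} on $f^L$ directly rather than on the discrete profiles.
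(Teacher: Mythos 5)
Your proposal follows essentially the same route as the paper: realise each $f^{n,L}$ (centred about $0$) as $f^{n,L_j}$ plus a deviation within the fan bounds, propagate via Theorem \ref{cor:devprop} to get a $k$-uniform bound $D(U^T f^{n+(n-1)k,L}) \leq D(U^T f^{n,L_j}) + e_{l,j}^T(1) + e_{r,j}^T(n) < 2$, pass to $f^L$ by Proposition \ref{lem:cont}, and finish with Corollary \ref{cor:regularity} using regularity bounds uniform over the compact interval $[L_1,\,L_p]$. One small correction to the bookkeeping you rightly flag as delicate: since $e_r^0$ vanishes on the left half and $e_l^0$ on the right half, the fan bounds only accommodate $L \geq L_j$ (left half moves left, right half moves right), so one must choose $j$ maximal with $L_j \leq L$, giving $L - L_j \leq 2\varepsilon$, rather than the two-sided window $|L - L_j| \leq \varepsilon$ you describe.
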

\begin{proof}
  The definitions of $e_l^0$ and $e_r^0$ imply that if, for every $L \in [L_1,\, L_p]$, the profile $f^{n,L}$ is shifted to the interval $[-\frac{L}{2},\, \frac{L}{2}]$, then for each $L$ there exists a $j \in [p]$ such that $f^{n,L_j}$ can be transformed into $f^{n,L}$ by adding a deviation within these bounds.
  
  Hence, Theorem \ref{cor:devprop} and \eqref{eq:grid} imply that there exists $\delta > 0$ such that, for any $k \geq 0$ and any $L \in [L_1,\, L_p]$, one has $D(U^T f^{n + (n-1)k,L}) < 2 - 2 \delta$. Then Proposition \ref{lem:cont} implies that $D(U^T f^L) < 2$ for any $L \in [L_1,\, L_p]$. Hence the proposition follows from Corollary \ref{cor:regularity}, since $L \in [2,\,5.2]$ and so we can choose regularity bounds for $f^L$ which are independent of $L$.
\end{proof}

\begin{table}[ht!]\label{tbl:caseL5}
    \begin{center}
      \begin{tabular}{l | >{$} c <{$} | c | c | c | c }
        $k$ & [a_k,\, b_k] & $p_k$ & $n_k$ & $T_k$ & Runtime\\
        \hline \hline
        1 & [2,\, \frac{23}{6}]               & 100       & 501    & 3  &  4.1 seconds\\ \hline
        2 & [\frac{23}{6},\,  4.5]            & 100       & 1003    & 4  & 6.6 seconds \\ \hline
        3 & [4.5,\,    4.9]                   & 800       & 5001    & 5  & 109 seconds \\ \hline
        4 & [4.9,\,    5]                     & 200       & 10~001  & 6  &  66 seconds  \\ \hline
        5 & [5,\,      5.1]                   & 800       & 40~001  & 6  &  19 minutes  \\ \hline
        6 & [5.1,\,    5.15]                  & 1000      & 60~001  & 7  &  41 minutes  \\ \hline
        7 & [5.15,\,   5.17]                  & 480       & 120~001 & 7  &  41 minutes  \\ \hline
        8 & [5.17,\,   5.185]                 & 500       & 180~001 & 7  &  67 minutes  \\ \hline
        9 & [5.185,\,  5.193]                 & 420       & 240~001 & 7  &  97 minutes  \\ \hline
       10 & [5.193,\,  5.197]                 & 480       & 240~001 & 7  & 108 minutes  \\ \hline
       11 & [5.197,\,  5.2]                   & 360       & 300~001 & 7  & 102 minutes \\ \hline
      \end{tabular}
      \caption{The parameters used to check \eqref{eq:caseL5} and thus prove Corollary \ref{cor:caseL5}. The runtime given is the time it took us to complete the calculations using 40 threads running in parallel. With a total CPU time of almost 270 hours some sort of cluster is thus necessary to check the full proof, but checking it up to $L = 5$ should be feasible on any modern computer.}
    \end{center}
  \end{table}

Proposition \ref{thm:grid} reduces the proof of Theorem \ref{thm:intro1} for $L \in [0,\, 5.2]$ to a finite computation.

\begin{corollary}\label{cor:caseL5}
  There is a time $T$ such that $U^T f^L$ is a consensus for every $L \leq 5.2$.
\end{corollary}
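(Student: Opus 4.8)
The strategy is to reduce Corollary~\ref{cor:caseL5} to finitely many verifications of the hypothesis of Proposition~\ref{thm:grid} and then invoke that proposition. Since the interval $[0,5.2]$ is split in Table~2 into eleven subintervals $[a_k,b_k]$ covering $[2,5.2]$ (the range $L<2$ being trivial, as $D(f^L)=L<2$ already), it suffices to run the finite computation on each piece and then patch. Concretely, for each $k$ I would:

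\begin{enumerate}
\item set $\varepsilon = (b_k - a_k)/(2 p_k)$ and choose the grid of diameters $L_j = a_k + (j-1)\cdot 2\varepsilon$, $j \in [p_k]$, so that consecutive diameters differ by exactly $2\varepsilon$ and the endpoints $a_k, b_k$ are covered;
\item form the canonical equally spaced profiles $f^{n_k, L_j}$ with $n_k$ (odd) agents, together with the prescribed tent-shaped left/right deviation bounds $e_{l,j}^0, e_{r,j}^0$;
\item iterate the recursion of Theorem~\ref{cor:devprop} for $T_k$ steps to obtain $e_{l,j}^{T_k}, e_{r,j}^{T_k}$, and verify numerically --- in the rigorous ball arithmetic of Arb --- that
\begin{equation}\label{eq:caseL5}
D(U^{T_k} f^{n_k, L_j}) + e_{l,j}^{T_k}(1) + e_{r,j}^{T_k}(n_k) < 2
\end{equation}
holds for every $j \in [p_k]$.
\end{enumerate}
Once \eqref{eq:caseL5} is confirmed for a given $k$, Proposition~\ref{thm:grid} (applied with $n=n_k$, $T=T_k$, and the family $(f^{n_k,L_j})_{j=1}^{p_k}$) yields a time $T'_k$ such that $U^{T'_k} f^L$ is a consensus for every $L \in [a_k,b_k]$.

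\textbf{Patching the pieces.} The eleven subintervals overlap at their shared endpoints and together cover $[2,5.2]$; combined with the trivial case $L < 2$, every $L \le 5.2$ lies in some $[a_k,b_k]$. Each piece gives a (possibly different) consensus time $T'_k$, so I would take $T = \max_k T'_k$: since a profile that has reached consensus at time $t$ remains a consensus at every later time (a consensus is a fixed point of $U$), $U^T f^L$ is a consensus for \emph{every} $L \le 5.2$ with this single uniform $T$. This is exactly the statement of the corollary.

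\textbf{The main obstacle.} Everything above is, in principle, bookkeeping plus a finite computation; the real difficulty is making that computation \emph{manageable and provably rigorous}. Two things conspire: first, as $L$ approaches the conjectured transition near $5.23$ the freezing dynamics become delicate, so one needs both a finer grid (larger $p_k$) and more agents (larger $n_k$) to keep the accumulated error bounds $e_{l,j}^{T_k}(1) + e_{r,j}^{T_k}(n_k)$ from swamping the $2 - D(U^{T_k}f^{n_k,L_j})$ margin --- this is the source of the escalating runtimes in Table~2. Second, the error recursion of Theorem~\ref{cor:devprop} must be evaluated so that the inequality \eqref{eq:caseL5} is certified rather than merely suggested; this is why all arithmetic is carried out in interval (ball) arithmetic, so that the printed ``$<2$'' is a theorem about the true real numbers and not an artifact of floating point. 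The symmetry of the equally spaced profiles about their midpoint (used implicitly via Observation~\ref{obs:symmetry} and Corollary~\ref{cor:regularity}) is what lets us track only the diameter plus the two extremal error bounds rather than the full profile, and is essential to keeping the state space of the computation small enough to iterate.
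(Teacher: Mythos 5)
Your proposal is correct and follows essentially the same route as the paper's proof: dispose of $L\le 2$ via Corollary \ref{cor:regularity}, cover $[2,\,5.2]$ by the subintervals of Table 2, certify $D(U^{T_k} f^{n_k,L_j}) + e_{l,j}^{T_k}(1) + e_{r,j}^{T_k}(n_k) < 2$ for each grid point in rigorous ball arithmetic, apply Proposition \ref{thm:grid} to each piece, and take the maximum of the resulting consensus times. The only slip is in the grid construction: with $\varepsilon=(b_k-a_k)/(2p_k)$ and $L_j=a_k+(j-1)\cdot 2\varepsilon$ the last point is $L_{p_k}=b_k-(b_k-a_k)/p_k<b_k$, so Proposition \ref{thm:grid} as stated only yields consensus on $[a_k,\,L_{p_k}]$ and a short segment at the right end of each subinterval escapes coverage; the paper's choice $\varepsilon=(b_k-a_k)/(p_k-1)$ with an equidistant grid satisfying $L_1=a_k$ and $L_{p_k}=b_k$ avoids this.
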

\begin{proof}
  That such a $T$ exists for $L \leq 2$ follows from\footnote{It's trivial that $U f^L$ is a consensus for any $L \leq 1$ and it is easy to show that \newline $L \leq 2 \implies D(U f^L) \leq 1 \implies U^2 f^L$ is a consensus.} 
  Corollary \ref{cor:regularity}. Hence, we may henceforth assume that\footnote{In fact, for $L \leq \frac{23}{6}$, Corollary \ref{cor:caseL5} follows from the explicit formula for $U^2 f^{n,L}$ in Appendix A. See Remark \ref{rem:23over6}. We start our computations at $L = 2$ anyway, as it only adds a couple of seconds to the runtime.}
 $L \in [2,\, 5.2]$.

  We divide the interval $[2,\, 5.2]$ into smaller subintervals $[a_k,\, b_k]$ and apply Proposition \ref{thm:grid} to the individual subintervals. For each subinterval $[a_k,\, b_k]$ we choose numbers $p_k$, $n_k$, and let the diameters $\left( L_j \right)_{j=1}^{p_k}$ be spread equidistantly over the subinterval so that $L_1 = a_k$ and $L_{p_k} = b_k$. We use $\varepsilon = \frac{b_k - a_k}{p_k - 1}$.

  For each $k$ we then use the Julia code in the ancillary file \newline \texttt{L\_up\_to\_5\_2\_parallel.jl} to find a $T_k$ such that, for each $j \in [p_k]$ we have
  \begin{align}\label{eq:caseL5}
    D(U^{T_k} f^{n_k,L_j}) + e_{l,j}^{T_k}(1) + e_{r,j}^{T_k}(n_k) < 2.
  \end{align}
  The data is presented in Table 2, which proves the corollary.
\end{proof}

\section{The case $L = 6$}
Because of the discussion at the end of Section 2, in order to complete the proof of Theorem \ref{thm:intro1}, it suffices to show that there is an $n$ and some bounded time $T$ such that $U^T f^{n+(n-1)k,6}$ is a consensus for any $k \in \mathbb{N}$.

One could imagine trying the approach of the previous section, to simply choose some large $n$ and calculate consecutive updates $U^t f^{n,6}$ until the diameter shrinks below 2, thereafter applying Corollary \ref{cor:regularity}. However, as observed by Lorenz and already mentioned in the introduction, the evolution in this case passes through a kind of ``quasi-stable'' state where most agents are clustered into 5 groups, after which the diameter decreases very slowly. Our computations suggest that for large $n$ we would have to update approximately 780 times before the diameter goes below 2. Over such time scales the error analysis from Section 3 becomes impractical on its own.

Another approach is necessary.

\setcounter{figure}{0}

\begin{figure}[ht!]\label{fig:a}
\includegraphics[width=1\textwidth]{./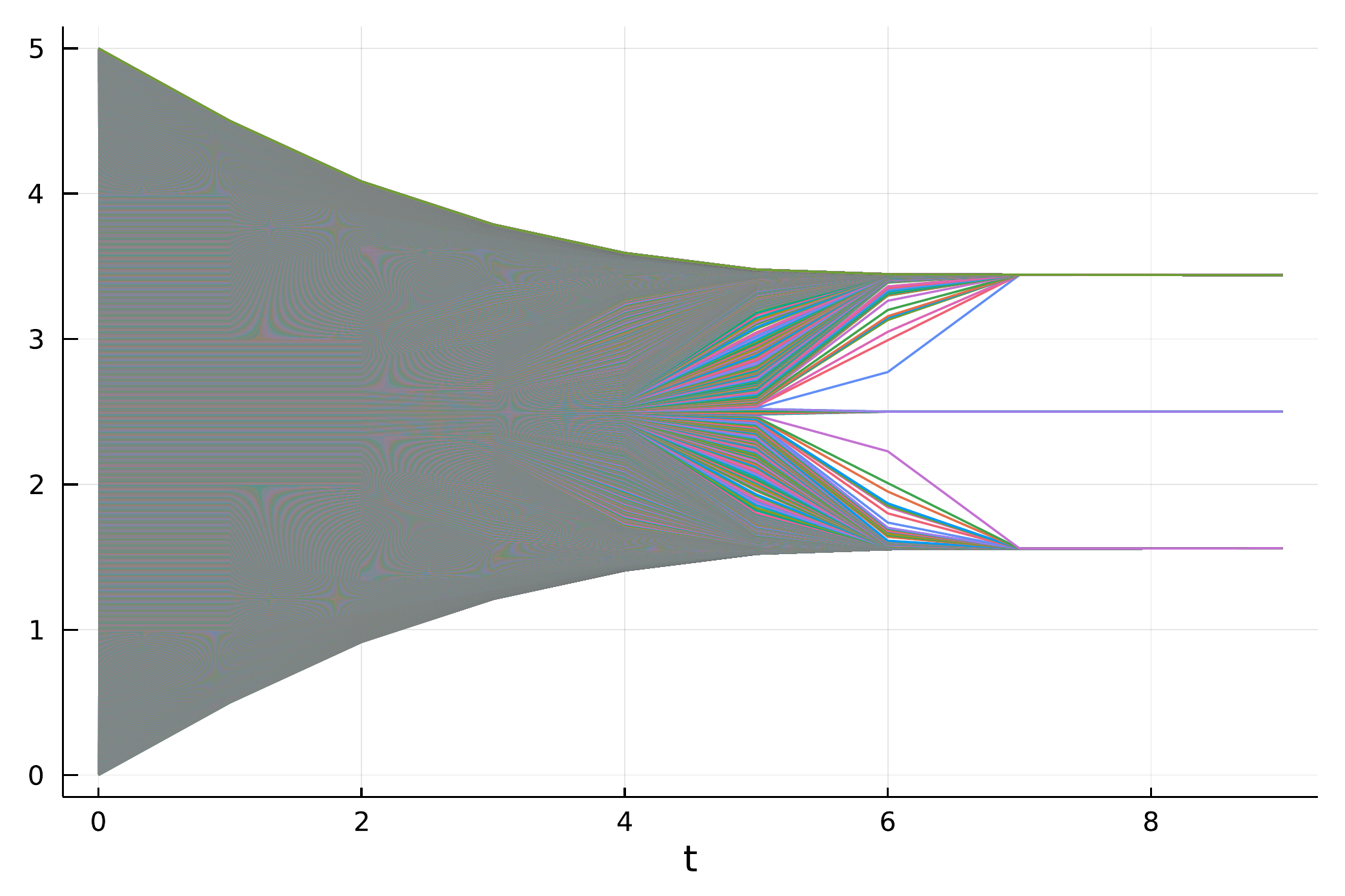}
\includegraphics[width=1\textwidth]{./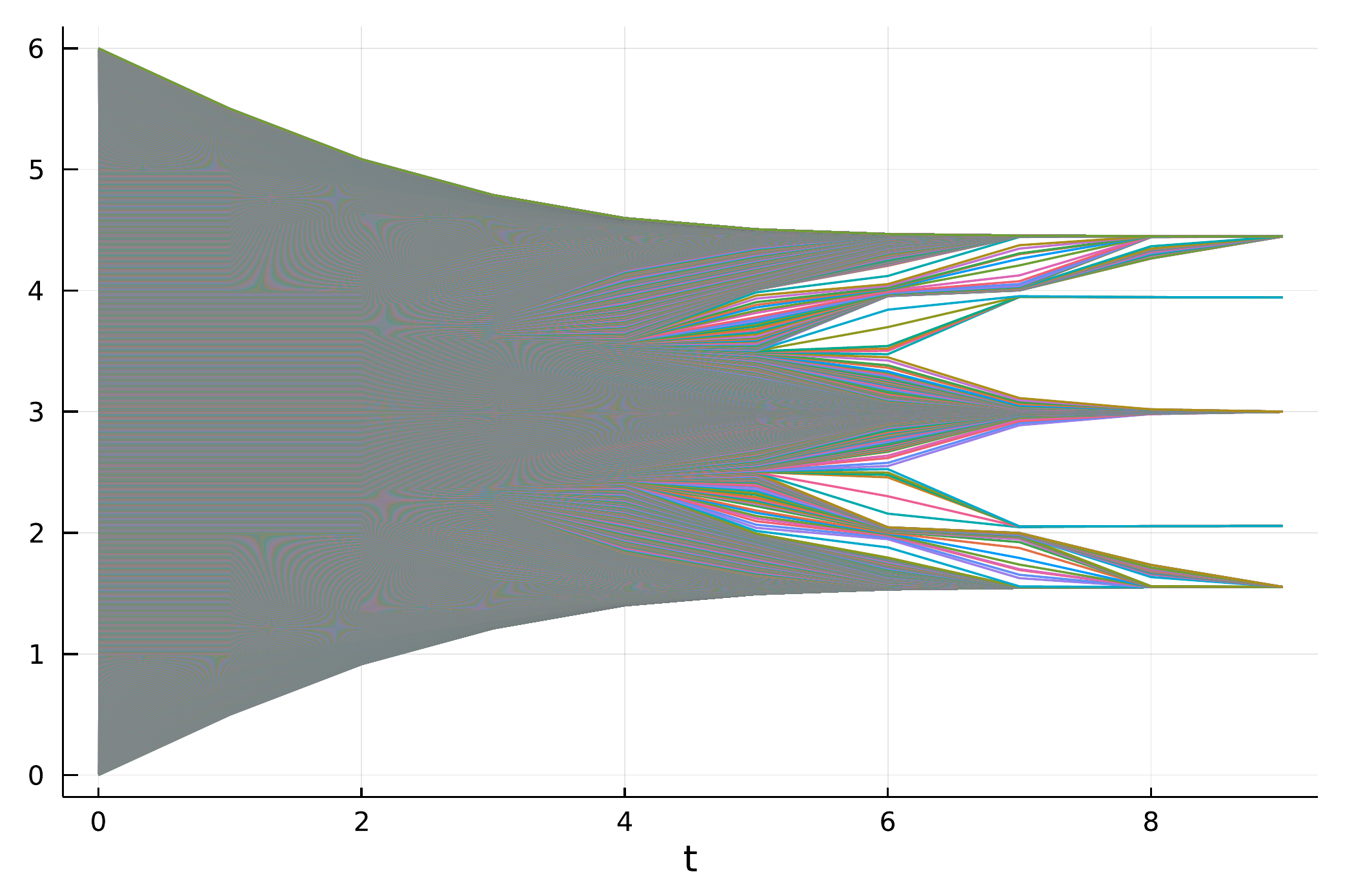}
\caption{First 9 updates of $f^{n,L}$ with $L = 5$, respectively $L = 6$ and $n = 10~000$.
}
\end{figure}

The figure below compares the evolution of two profiles $f^{n,5}$ and $f^{n,6}$ for $n = 10~000$. As discussed in the previous paragraph, one finds that the respective mechanisms behind reaching a consensus are quite different. In both profiles the agents at the edges first move inwards, so that the agent-per-opinion density increases at the edges. This causes the agents further in to move outwards, leaving comparatively empty spaces which in turn make the remaining agents clump together at the centre.

When $L = 5$ it only takes five updates for the extremists to move enough to see past the centre (see Table 2: there $T_{4} = 6$ because it takes one further timestep for \eqref{eq:grid} to hold) so even though the central clique is small and it takes many updates to reach a consensus, we may use Corollary \ref{cor:regularity} to assert consensus in bounded time. 

When $L = 6$, the clique of agents at the centre is larger but the profile is so wide that this clique is far out of sight of the extremists. However, there are a few agents that seem to get stuck in between. We will refer to the set of in-between agents as a \emph{microcluster}. The agents in the microcluster see both the central and extremal groups, but as the groups are of similar size the updates keep the microcluster agents within sight of both. Thus the microclusters will, so to speak, exert a tiny force on the extremists, and very slowly pull them towards the centre. It takes several hundred updates even before the extremists see the centre.

We will now formalise the content of the previous paragraphs and find a criterion for profiles which guarantees consensus (Theorem \ref{thm:6tocons}). We will then use the error analysis from Section 3 to show that
\newline
$U^8 f^{n+(n-1)k,6}$ satisfies this criterion for $n = 80 ~ 005$ and any $k \geq 0$ (having $n \equiv 1 ~ (\text{mod }6)$ simplifies the code used).

\begin{definition}\label{def:good}
Let $f$ be a symmetric, discrete profile on $n \geq 3$ agents with $D(f) \leq 4$. A family $A$, $B$, $C$, $D$ and $E$ of of subsets of $[n]$ is said to be \emph{good} if the following conditions hold:
\begin{enumerate}
\item Each of $A$, $B$, $C$, $D$, $E$ is an interval.
\item The intervals are pairwise disjoint and adjacent, by which we mean $\min B = \max A + 1$ etc.
\item $1 \in A$.
\item $E$ is non-empty and $\max E + \min E = n + 1$, i.e. $E$ is symmetric about the midpoint of the profile.
\item $C = \{ i \in B \cup C \cup D: A \cup E \subseteq \mathcal{N}_i(f) \}$
\end{enumerate}
In words, $C$ consists of the agents in between $A$ and $E$ who in $f$ see both in their entirety, while $B$ and $D$ contain the agents left over on either side of $C$.
\end{definition}

In what follows, we will denote $L_t = D(U^t f)$ and, for a set $X$ of agents, write $\diam{t}{X} := U^t f(\max X) - U^t f(\min X)$ to denote the span of opinions within $X$ at time $t$. The size of a set $X$ of agents will be denoted $|X|$.

We are now ready to state the new consensus criterion:

\newcommand{\tzero}{0}
\begin{theorem}\label{thm:6tocons}
Let $f$ be a symmetric discrete profile on $n \geq 3$ agents, where $n$ is odd, with $D(f) \leq 4$.

Assume there is a choice of good sets $A_{\tzero}$, $B_{\tzero}$, $C_{\tzero}$, $D_{\tzero}$, $E_{\tzero}$ such that $A_0$ and $E_0$ are out of sight of one another, i.e. $f(\max A_0) + 1 < f(\min E_0)$, and such that the following conditions hold:

\begin{align}\label{eq:6tocons1}
C_0 \neq \emptyset.
\end{align}

\begin{align}
\frac{n-2|A_{0}|}{n-|A_{0}|} \frac{L_{0}}{2} \leq 1 \label{eq:6tocons4}
\end{align}

\begin{align}
\frac{n-|E_{0}|}{n + |E_{0}|} \frac{L_{0}}{2} + \frac{2 |B_{0}|}{n - 2|A_{0}| - |B_{0}|} \leq 1  \label{eq:6tocons5}
\end{align}

\begin{align}
\frac{2 |B_{0}|}{n - 2|A_{0}| - |B_{0}|} + \frac{4 |D_{0}|}{n - |E_{0}|} \leq  \frac{2 |C_{0}||E_{0}|}{(|A_{0}|+|B_{0}|+|C_{0}|)(n+|E_{0}|)} \label{eq:6tocons6}.~~~\,
\end{align}

Then $U^T f$ is a consensus for some $T$ which only depends on $L_0$ and the relative sizes $\frac{|A_0|}{n}$, $\frac{|C_0|}{n}$ and $\frac{|E_0|}{n}$ of the non-empty sets $A_0$, $C_0$ and $E_0$.
\end{theorem}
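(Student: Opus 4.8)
The plan is to track the five groups $A$, $B$, $C$, $D$, $E$ (together with their mirror images under the symmetry of $f$) through successive updates and show that the hypotheses \eqref{eq:6tocons1}--\eqref{eq:6tocons6} are preserved, while a suitable monotone quantity---roughly, the span $\diam{t}{A}$ of the extremal group, or the gap between the extremists and the centre---decreases by a definite amount at each step. Since $f$ is discrete and symmetric, it suffices to monitor opinions; by Observation~\ref{obs:symmetry} symmetry about the profile midpoint persists under $U$. First I would set up the bookkeeping: because $A\cup E\subseteq\mathcal N_i(f)$ for every $i\in C$ and $A,E$ are out of sight of each other, the update $U f(i)$ for $i\in C$ is an average over a neighbourhood containing all of $A$ and all of $E$ but not much beyond, so $Uf$ contracts $C$ toward the profile's centre of mass (which is the midpoint by symmetry); conditions \eqref{eq:6tocons4} and \eqref{eq:6tocons5} are exactly the inequalities guaranteeing that, after one update, the extremal agents in $A$ have moved inward by enough that $\frac{n-2|A|}{n-|A|}\frac{L}{2}\le 1$ remains valid with the new (smaller) diameter, i.e. the extremists still see ``most'' of the profile.

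The heart of the argument is a one-step induction: assuming good sets $A_t,\dots,E_t$ at time $t$ satisfying the four numerical conditions with $L_t$ in place of $L_0$, I would define $A_{t+1},\dots,E_{t+1}$ (most naturally $A_{t+1}=A_t$ as index sets, possibly absorbing $B_t$ into $C_{t+1}$ or shifting the boundary of $C$ as neighbourhoods grow), compute the extremal opinion $U^{t+1}f(1)$ and the innermost opinion $U^{t+1}f(\max E)$ using Definition~\ref{def:update1}, and estimate them via the averages appearing in \eqref{eq:6tocons4}--\eqref{eq:6tocons6}. The quantity $\frac{2|B_0|}{n-2|A_0|-|B_0|}$ is the error term bounding how far the ``$B$'' agents (those between $A$ and $C$ that don't quite see all of $E$) can drag the extremists' update off the ideal value; condition \eqref{eq:6tocons6} says that the net rightward pull from $B$ and $D$ on $C$ is dominated by the leftward pull that the large group $E$ (via $C$) exerts, so that the span of $C$ genuinely shrinks. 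Iterating, either $L_t$ drops below $2$ in a bounded number of steps---after which, since the profile is then weakly regular with controlled bounds and symmetric, Corollary~\ref{cor:regularity} (or its discrete analogue, re-run via Proposition~\ref{prop:regularity} on a regular refinement) forces consensus in further bounded time---or the extremists come into sight of $E$, which again triggers rapid collapse.

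I expect the main obstacle to be showing that the \emph{good} structure is self-reproducing: a priori, after an update the set of agents seeing all of $A_{t+1}\cup E_{t+1}$ need not be a contiguous interval matching the old $C$, and agents could leak between groups in a way that breaks condition (v) of Definition~\ref{def:good} or the adjacency/symmetry conditions. Handling this requires a careful monotonicity argument---neighbourhoods only grow as the diameter shrinks, so once $A\cup E\subseteq\mathcal N_i$ it stays that way, which should let $C$ only expand and $B,D$ only shrink---plus a check that the numerical inequalities, once true, stay true under the substitutions $L_t\mapsto L_{t+1}$, $|B_t|\mapsto|B_{t+1}|\le|B_t|$, etc. A secondary subtlety is extracting the quantitative bound on $T$ depending \emph{only} on $L_0$ and the relative sizes $\frac{|A_0|}{n},\frac{|C_0|}{n},\frac{|E_0|}{n}$: this means all the per-step decrements must be expressed in scale-invariant form (ratios of group sizes, not absolute counts), so that the induction and the final invocation of Corollary~\ref{cor:regularity} produce a step count independent of $n$ itself. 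The rest---verifying the displayed inequalities are the precise algebraic consequences of averaging over the indicated neighbourhoods---should be routine once the combinatorial skeleton is pinned down.
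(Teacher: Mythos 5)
Your outline matches the paper's strategy at the skeleton level (keep $A_0$ and $E_0$ fixed, show the set $C_t$ of agents seeing all of $A_0\cup E_0$ can only grow, extract a definite per-step inward drift of the extremists, stop when either the diameter drops below $2$ or $A_0$ and $E_0$ come into sight, then finish as in Corollary \ref{cor:regularity}), but two of your key mechanisms are not right. First, the claim that ``neighbourhoods only grow as the diameter shrinks, so once $A\cup E\subseteq\mathcal N_i$ it stays that way'' is false for the HK dynamics: a shrinking diameter does not prevent an individual agent's neighbourhood from losing members after an update --- that is exactly how fragmentation happens. The paper cannot and does not rely on such monotonicity; it proves $C_{t+1}\supseteq C_t$ by two explicit opinion-gap estimates, namely $U^{t+1}f(\max C_t)-U^{t+1}f(1)\le\frac{L_0}{2}\cdot\frac{n-2|A_0|}{n-|A_0|}\le 1$ (which is where \eqref{eq:6tocons4} enters) and an analogous bound keeping $\min C_t$ within distance $1$ of $\max E_0$ (which is where \eqref{eq:6tocons5} enters, together with an inductively maintained bound of the form $\frac{4|B_0|}{n-2|A_0|-|B_0|}$ on the span of opinions inside $E_0$ after an update). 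Those span bounds on $A_0$ and $E_0$ are themselves part of the induction and have to be proved; your proposal does not supply them.

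Second, you misidentify the role of \eqref{eq:6tocons6} and omit the actual quantitative engine. The engine is the invariant that, as long as the diameter exceeds $2$, one has $U^t f(\min C_0)-U^t f(1)\ge\frac{2|E_0|}{n+|E_0|}$: because $\min C_t$ averages in all of $E_0$ while the extremist does not, the microcluster always sits a definite distance to the right of the extremist, and since the extremist sees exactly $A_0\cup B_t\cup C_t$, this forces
\begin{align}
U^{t+1}f(1)-U^t f(1)\;\ge\; s:=\frac{2|C_0||E_0|}{(|A_0|+|B_0|+|C_0|)(n+|E_0|)},
\end{align}
a scale-invariant per-step drift, which is what makes the first meeting time $t^*$ of $A_0$ and $E_0$ bounded in terms of relative sizes only. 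Condition \eqref{eq:6tocons6} is not about the span of $C$ shrinking; it states precisely that $s$ exceeds the sum of the span bound on $A_0$ and half the span bound on $E_0$, which is what guarantees that at time $t^*$ the diameter is at most $2$ plus twice that sum and hence drops to at most $2$ after one further step. Without these two ingredients your induction does not close.
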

\begin{rem}
The set $C_0$ plays the role of what we in the beginning of this section called a microcluster. The idea of the proof is to show that this is large enough and exerts enough force to eventually bring the extremists in $A_0$ into contact with the central agents in $E_0$ before either set fragments and thereby reach a consensus.
\end{rem}

\begin{taggedblock}{proofsuniform}
\begin{proof}[Proof of Theorem \ref{thm:6tocons}]
 Recursively, for every $t>0$, set
\begin{align*}
A_t &:= A_0\\
E_t &:= E_0\\
C_t &:= \{ i \in B_0 \cup C_0 \cup D_0: A_{0} \cup E_{0} \subseteq \mathcal{N}_i(U^tf) \}\\
B_t &:= B_{0} \setminus C_t\\
D_t &:= D_{0} \setminus C_t.
\end{align*}

We divide the proof into three steps.

~~

{\bf Step 1:}
For each $t \geq 0$, define the following conditions.
\begin{enumerate}[label = \Roman*${}_t$:]
\item $C_{\tzero} \subseteq C_{1} \subseteq ... \subseteq C_t$.\label{i1}
\item $\diam{t}{A_0} \leq \frac{4 |D_{\tzero}|}{n - |E_{\tzero}|} := w_A$ and  $\diam{t}{E_0} \leq \frac{4 |B_{\tzero}|}{n - 2|A_{\tzero}| - |B_{\tzero}|} := w_E$.\label{i2}
\item $U^t f(\min C_0) - U^t f(1) \geq \frac{2 |E_0|}{n + |E_0|}$.\label{i3}
\end{enumerate}

We will show that
\begin{align*}
\begin{rcases}
\text{I}_t \\
\mathcal{N}_{\max A_0}(U^t f) \cap E_0 = \emptyset
\end{rcases}
\implies
\begin{cases}
\text{I}_{t+1} \\
\text{II}_{t+1} \\
\text{III}_{t+1} \text{ or } L_{t+1} < 2.
\end{cases}
\end{align*}
Intuitively, for as long as $A_0$ and $E_0$ remain out of sight, the sets $C_t$ continue to grow and the spans of opinions in $A_0$ and $E_0$ don't exceed some fixed small amount. Note that $\text{II}_t$ is \emph{not} assumed to hold at $t=0$, rather the structure of the profile causes the agents in $A_0$ and $E_0$ to already become more clustered at the first time step.

First, we show that the two assumptions imply $\text{II}_{t+1}$.

Since, per definition, all agents in $A_0$ can see all agents in $C_t$ and we assume $E_0$ is out of sight of $A_0$, the case that pulls the two ends of $A_0$ as far apart as possible is that where the set difference of their respective neighbour sets is all of $D_t$, and where all agents in $D_t$ are placed at $U^tf(\max A_0) + 1$. 
 We obtain the bound
\begin{align*}
\diam{t+1}{A_0} & \leq \frac{|D_t|(1 + \diam{t}{A_0})}{|A_0|+|B_0|+|C_0|+|D_0|} = \frac{|D_t|(1 + \diam{t}{A_0})}{(n - |E_0|) / 2}.
\end{align*}
Since, by $\text{I}_{t}$ and \eqref{eq:6tocons1}, $C_t$ is nonempty, we have the trivial bound $\diam{t}{A_0} \leq 1$. Hence
\begin{align*}
\diam{t+1}{A_0} & \leq \frac{2 |D_t|}{(n - |E_0|) / 2} \stackrel{\text{I}_{t}}{ \leq}  \frac{4 |D_{\tzero}|}{n - |E_{\tzero}|},
\end{align*}
as desired.

Similarly we see that, by definition, $\mathcal{N}_{\min E_0} (U^t f) \setminus \mathcal{N}_{\max E_0} (U^t f) \subseteq B_t$. Hence, in the next time step, it is impossible to separate the two ends of $E_0$ more than if all of $B_t$ is visible to the agent $\min E_0$, while none of $B_t$ is visible to the agent $\max E_0$. Further, since $f$ is symmetric and $E_0$ is symmetric around the midpoint, all of $E_0$ can see a mirror image of $C_t$ and $D_t$ to the right. By the same argument as before, $\diam{t}{E_0} \leq 1$. 
We obtain the bound
\begin{align*}
\diam{t+1}{E_0} & \leq 2 \frac{|B_t| (1 + \diam{t}{E_0})}{|B_t|+2|C_t|+2|D_t|+|E_t|} \leq \frac{4 |B_t|}{n - 2|A_0| - |B_t|} \\
& \stackrel{\text{I}_{t}}{\leq} \frac{4 |B_{\tzero}|}{n - 2|A_{\tzero}| - |B_{\tzero}|},
\end{align*}
as desired. This proves $\text{II}_{t+1}$.

Next we deduce $\text{I}_{t+1}$. We will see that $C_{t+1} \supseteq C_t$ by checking that every agent in $A_0 \cup E_0$ remains in sight of every agent in $C_t$ after the update.

Since $A_0$ and $E_0$ remain out of sight at time $t$, an agent in $C_t$ cannot see the mirror image of $A_0$, but could a priori see every other agent. Since the profile is symmetric
\begin{align*}
U^{t+1} f(\max C_t) \leq \frac{(\frac{L_t}{2} + U^t f(1))(n - 2|A_0|) + |A_0| \langle U^t f(i) \rangle_{i \in A_0}}{n-|A_0|}.
\end{align*}
Since, as already noted, $\diam{t}{A_0} \leq 1$, it is clear that $U^t f(1) \leq \langle U^t f(i) \rangle_{i \in A_0} \leq U^{t+1} f(1)$. Hence,
\begin{align}
U^{t+1} f(\max C_t) - U^{t+1} f(1) \leq \frac{\frac{L_t}{2}(n- 2|A_0|)}{n-|A_0|} \leq \frac{L_0}{2}\frac{n - 2|A_0|}{n - |A_0|} \leq 1,
\end{align}
where the last inequality holds by \eqref{eq:6tocons4}. This shows that every agent in $A_0$ remains in sight of every agent in $C_t$ after the update.

We proceed similarly to prove retained contact with $E_0$. To simplify the notation, we assume without loss of generality that the profile is shifted to be symmetric around $0$, so that $f(1)+f(n) = 0$. A lower bound on the opinion of agent $\min C_t$ at time $t+1$ is obtained by placing all agents in $A_0 \cup B_t \cup C_t \cup D_t = A_0 \cup B_0 \cup C_0 \cup D_0$ at $-\frac{L_t}{2}$ and assuming this agent cannot see beyond $E_0$. This gives
\begin{align*}
U^{t+1} f(\min C_t) \geq
-\frac{|A_0| \cup |B_0| \cup |C_0| \cup |D_0|}{(n + |E_0|)/2} \frac{L_t}{2} \geq
-\frac{n-|E_0|}{n + |E_0|} \frac{L_0}{2}.
\end{align*}

Hence,
\begin{align*}
U^{t+1} f(\max E_0) - U^{t+1} f(\min C_t) &
\leq \frac{\diam{t+1}{E_0}}{2} + \frac{n-|E_0|}{n + |E_0|} \frac{L_{\tzero}}{2} \leq 1,
\end{align*}
where the last inequality holds by $\text{II}_{t+1}$ and \eqref{eq:6tocons5}. This establishes $\text{I}_{t+1}$.

Finally, we deduce that either $L_{t+1} < 2$ or $\text{III}_{t+1}$ holds. By definition of $C_t$,
\begin{align}\label{eq:extremocentric}
U^{t+1} f(1) = \langle U^t f(i) \rangle_{i \in A_0 \cup B_t \cup C_t}.
\end{align}
On the other hand, as the agent $\min C_t$ can see at least all of $E_0$,
\begin{align*}
U^{t+1} f(\min C_t)
& \geq
\frac{(|A_0| \!\! + \!\! |B_t| \!\! + \!\! |C_t|) \langle U^t f(i) \rangle_{i \in A_0 \cup B_t \cup C_t} \!\! + \!\! |D_t| \langle U^t f(i) \rangle_{i \in D_t} \!\! + \!\! |E_0| \langle U^t f(i) \rangle_{i \in E_0}}{|A_0 \cup B_0 \cup C_0 \cup D_0 \cup E_0|}.\\
\end{align*}
Hence,
\begin{align}
U^{t+1} f(\min C_t) - U^{t+1} f(1)
& \geq  \frac{2 |E_0|}{n + |E_0|} \left( \langle U^t f(i) \rangle_{i \in E_0} - \langle U^t f(i) \rangle_{i \in A_0 \cup B_t \cup C_t} \right)\\
& \stackrel{\eqref{eq:extremocentric}}{=}
 \frac{2 |E_0|}{n + |E_0|} \frac{L_{t+1}}{2}.
\end{align}
This proves that either $L_{t+1} < 2$ or $\text{III}_{t+1}$ holds.

~

{\bf Step 2:}
We show that
\begin{align*}
\begin{rcases}
\text{I}_{t} \\
\text{III}_{t} 
\end{rcases}
\implies
U^{t+1} f(1) - U^{t} f(1) \geq s \geq w_A + \frac{w_E}{2}
\end{align*}
where $s>0$ is bounded away from $0$, independent of $t$, by some function which only depends on the relative sizes $\frac{|C_0|}{n}$ and $\frac{|E_0|}{n}$.

By definition, at time $t$, the left extremist can see all agents in $A_0 \cup B_t \cup B_t$. Hence,
\begin{align}
U^{t+1} f(1) & = \langle U^t f(i) \rangle_{i \in A_0 \cup B_t \cup C_t}\\
& \stackrel{\text{I}_{t}}{\geq} \langle U^t f(i) \rangle_{i \in A_0 \cup B_0 \cup C_0}\\
& \stackrel{\text{III}_{t}}{\geq} \frac{U^t f(1) |A_0 \cup B_0| + \left( U^t f(1) + \frac{2 |E_0|}{n+|E_0|} \right) |C_0|}{|A_0 \cup B_0 \cup C_0|}\\
& = U^t f(1) + \frac{2 |C_0| |E_0|}{(|A_0| +| B_0| +| C_0|)(n + |E_0|)} 
\end{align}

\begin{align}
\implies U^{t+1} f(1) - U^{t} f(1) \geq \frac{2 |C_0| |E_0|}{(|A_0| +| B_0| +| C_0|)(n + |E_0|)} := s.
\end{align}

By \eqref{eq:6tocons1} and the fact that $|A_0| + |B_0| + |C_0| \leq \frac{n - |E_0|}{2}$, $s$ is bounded away from 0, independent of $t$, by some function which only depends on the relative sizes $\frac{|C_0|}{n}$ and $\frac{|E_0|}{n}$.

Finally, by \eqref{eq:6tocons6} and the definitions of $w_A$ and $w_E$ in $\text{II}_t$, we have $s \geq w_A + \frac{w_E}{2}$ as desired.

~

{\bf Step 3:}
By Steps 1 and 2 there is some $t^*$, depending only on the relative sizes $\frac{|C_0|}{n}$ and $\frac{|E_0|}{n}$, such that $A_0$ and $E_0$ can first see each other at time $t^*$.

We claim that $L_{t^*+1} \leq 2$. If $L_{t^*} \leq 2$ this is trivial, as the diameter $L_t$ is a non-increasing function of time. If $L_{t^*} > 2$ then, since $A_0$ and $E_0$ can see each other, we must have
\begin{align}
L_{t^*} \leq 2 + 2 \left( \diam{t} {A_0} + \frac{\diam{t} {E_0}}{2} \right).
\end{align}
Since $\text{II}_{t^*}$ holds, Step 2 will imply that $L_{t^*+1} \leq 2$.

Finally, since $E_0$ is symmetric about the midpoint and visible to all agents, consensus must follow in a time that is still bounded by some function of the relative sizes $\frac{|A_0|}{n}$, $\frac{|C_0|}{n}$ and $\frac{|E_0|}{n}$, by an argument similar to that in the proof of Corollary \ref{cor:regularity}.
\end{proof}
\end{taggedblock}

\begin{corollary}\label{cor:6tocons}
Let $k\geq 0$ and $n \geq 3$ where $n$ is odd. Let $f$ be a symmetric profile on $n$ agents and let $g$ be a profile on $n + (n-1)k$ agents such that $e^0 := B_k^n(g)-f$ is a consistent deviation.

Assume there exists a time $t_0$ such that the following hold:

\newcommand{\newsetsplus}[1]{  #1^+}
\newcommand{\newsetsminus}[1]{ #1^-}

\begin{enumerate}
\item $L_{{t_0}} + e_l^{t_0}(1) + e_r^{t_0}(n) \leq 4$, where $e_l^{t_0}$ and $e_r^{t_0}$ are computed recursively acording to \eqref{eq:rightbound} and \eqref{eq:leftbound} in Theorem \ref{cor:devprop}.
\item\label{it:6tocons2} There is, for the profile $U^{t_0} f$, a choice of subsets $A_{t_0}$, $B_{t_0}$, $C_{t_0}$, $D_{t_0}$, $E_{t_0}$ of $[n]$ such that
\begin{itemize}
\item parts (i)-(iv) of Definition \ref{def:good} are satisfied,
\item $\min \{|A_{t_0}|,\, |C_{t_0}|,\, |E_{t_0}| \} \geq 2$,
\item the following inequalities hold, where we denote $\newsetsminus{X} := |X_{t_0}| - 1$, $\newsetsplus{X} := |X_{t_0}| + 1$ and $X^0 := |X_{t_0}|$ for $X = A, B, C, D, E$:
\begin{align}
\frac{n-2 \newsetsminus{A}}{n- \newsetsminus{A}} \frac{L_{t_0}}{2} &
\leq 1 \\
\frac{n- \newsetsminus{E}}{n +  \newsetsminus{E}} \frac{L_{t_0}}{2} + \frac{2  \newsetsplus{B}}{n - 2 A^0 -  \newsetsplus{B}} &
\leq 1\\
\frac{2  \newsetsplus{B}}{n - 2 A^0 -  \newsetsplus{B}} + \frac{4  \newsetsplus{D}}{n -  \newsetsminus{E}} &
\leq  \frac{2  \newsetsminus{C} \newsetsminus{E}}{( \newsetsminus{A} + B^0 + \newsetsminus{C})(n + \newsetsminus{E})}.
\end{align}
\end{itemize}
\item $U^{t_0} f(\max C_{t_0}) + e_r^{t_0}(\max C_{t_0})       \leq       U^{t_0} f(1) - e_l^{t_0}(1) + 1 \label{cor:6tocons1}$.
\item $U^{t_0} f(\min C_{t_0}) - e_l^{t_0}(\min C_{t_0})       \geq       U^{t_0} f(\max E_{t_0}) + e_r^{t_0}(\max E_{t_0}) - 1$.
\item $U^{t_0} f(\max A_{t_0}) + e_r^{t_0}(\max A_{t_0})       <          U^{t_0} f(\min E_{t_0}) - e_l^{t_0}(\min E_{t_0}) - 1\label{cor:6tocons3}$.
\end{enumerate}

Then there is some $T$, depending only on $t_0$ and the relative sizes  $\frac{|A_{t_0}|}{n}$, $\frac{|C_{t_0}|}{n}$ and $\frac{|E_{t_0}|}{n}$, such that $U^T g$ is a consensus. In particular, $T$ is otherwise independent of $k$ and $g$.
\end{corollary}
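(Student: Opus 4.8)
The plan is to apply Theorem~\ref{thm:6tocons} to the profile $\varphi := U^{t_0} g$ itself, using the error propagation of Section~3 only to \emph{certify} that $\varphi$ meets that theorem's hypotheses. Once this is done, Theorem~\ref{thm:6tocons} hands us a time $T'$ with $U^{T'}\varphi = U^{t_0+T'}g$ a consensus, and we set $T=t_0+T'$.

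First I would run the recursion of Theorem~\ref{cor:devprop} with the given $f$, $e^0$ and $g$ (so that the role of ``$g+e'$'' there is played by $g$), obtaining for every $t$ a deviation $e^t$ with $B_k^n(U^tg)=U^tf+e^t$ and $-e_l^t\le e^t\le e_r^t$. Two consequences will be used throughout: since $B_k^n$ preserves the first and last agents, $D(\varphi)=D(B_k^n\varphi)$ differs from $L_{t_0}$ by at most $e_l^{t_0}(1)+e_r^{t_0}(n)$, so hypothesis~(i) gives $D(\varphi)\le 4$; and the opinion in $\varphi$ of the ``marked'' agent $p_i:=(i-1)(k+1)+1$ equals $U^{t_0}f(i)+e^{t_0}(i)\in[U^{t_0}f(i)-e_l^{t_0}(i),\,U^{t_0}f(i)+e_r^{t_0}(i)]$, while monotonicity of $\varphi$ traps every non-marked agent between its neighbouring marked values.

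Then I would build good sets for $\varphi$ by inflating $A_{t_0},\dots,E_{t_0}$: take $A_0'=\{1,\dots,p_{\max A_{t_0}}\}$, take $E_0'=\{p_{\min E_{t_0}},\dots,p_{\max E_{t_0}}\}$ (genuinely symmetric about the midpoint of $[N]$, $N:=n+(n-1)k$, because $i\mapsto p_i$ intertwines the two reflections, i.e.\ $p_{n+1-i}=N+1-p_i$), let $C_0'$ be the set forced by part~(v) of Definition~\ref{def:good}, and let $B_0',D_0'$ be the agents left over on each side. Parts (i)--(iv) of Definition~\ref{def:good} hold by construction, so it remains to verify, for these sets, the out-of-sight condition $\varphi(\max A_0')+1<\varphi(\min E_0')$, that $C_0'\neq\emptyset$, and \eqref{eq:6tocons4}--\eqref{eq:6tocons6}. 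This is where hypotheses (iii)--(v) of the corollary enter: combined with the bounds above and with monotonicity of $B_k^n\varphi$, hypothesis~(v) forces $\varphi(\max A_0')+1<\varphi(\min E_0')$, while hypotheses (iii) and (iv) force every marked agent $p_c$ with $c\in C_{t_0}$ to see all of $A_0'\cup E_0'$ in $\varphi$; since $C_0'$ is an interval this gives $C_0'\supseteq\{p_{\min C_{t_0}},\dots,p_{\max C_{t_0}}\}$, hence $C_0'\neq\emptyset$ together with
\[
|C_0'|\ge(|C_{t_0}|-1)(k+1)+1,\quad |B_0'|\le(|B_{t_0}|+1)(k+1),\quad |D_0'|\le(|D_{t_0}|+1)(k+1),
\]
alongside the exact values $|A_0'|=(|A_{t_0}|-1)(k+1)+1$ and $|E_0'|=(|E_{t_0}|-1)(k+1)+1$. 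Dividing through by $k+1$ (note $N/(k+1)\approx n-1$), every quantity appearing in \eqref{eq:6tocons4}--\eqref{eq:6tocons6} for $\varphi$ is controlled, up to the slack of one coarse agent, by its analogue for $U^{t_0}f$ on $n$ agents at the worst-case sizes $A^-,E^-,B^+,D^+,C^-$; since hypothesis~(ii) is precisely \eqref{eq:6tocons4}--\eqref{eq:6tocons6} at those worst-case sizes, it implies the inequalities needed for $\varphi$. One also notes that making $C_0'$ larger only shrinks $B_0',D_0'$ and hence only eases \eqref{eq:6tocons5}--\eqref{eq:6tocons6}, and that $|A_0'|/N$, $|C_0'|/N$, $|E_0'|/N$ stay within $O(1/n)$ of $|A_{t_0}|/n$, $|C_{t_0}|/n$, $|E_{t_0}|/n$ uniformly in $k$. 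Theorem~\ref{thm:6tocons} then yields a time $T'$ depending only on $D(\varphi)\le 4$ and these relative sizes, hence bounded by a quantity depending only on $t_0$ and $|A_{t_0}|/n$, $|C_{t_0}|/n$, $|E_{t_0}|/n$, uniformly in $k$ and $g$; with $T=t_0+T'$ this proves the corollary.

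The hard part will be the bookkeeping in this last step: for each of \eqref{eq:6tocons4}--\eqref{eq:6tocons6} one must check in which direction every perturbation cuts --- the replacement $n\rightsquigarrow N/(k+1)$, the one-coarse-agent ambiguity of the forced set $C_0'$ (hence of $B_0'$ and $D_0'$), and the within-block spread of $\varphi$ controlled by $e_l^{t_0},e_r^{t_0}$, all of size $O(1/n)$ or smaller --- and confirm that the slack hard-wired into the $X^{\pm}$ of hypothesis~(ii) dominates their combined effect. A secondary point needing care is that Theorem~\ref{thm:6tocons} is stated only for symmetric profiles, so one uses that $\varphi=U^{t_0}g$ is symmetric, which holds in the intended application, where $g$ is a canonical regular refinement of a symmetric profile.
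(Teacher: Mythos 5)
Your proposal is correct and follows essentially the same route as the paper: apply Theorem \ref{thm:6tocons} to $U^{t_0}g$ with $A_0$ and $E_0$ chosen as the refinements of $A_{t_0}$ and $E_{t_0}$, use conditions (iii)--(v) together with the error bounds of Theorem \ref{cor:devprop} to show the forced set $C_0$ contains the refined image of $C_{t_0}$, and use the $\pm 1$ slack built into condition (ii) to absorb the shifts in relative sizes caused by refinement. The paper's own proof is a four-sentence sketch of exactly this argument; your version supplies the explicit index map and size bookkeeping that the paper leaves implicit.
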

\begin{proof}
Suppose the conditions (i)-(v) hold for $f$. We claim that the hypotheses of Theorem \ref{thm:6tocons} hold for $U^{t_0} g$, with $A_0$ and $E_0$ chosen such that their respective coarsenings are $A_{t_0}$ and $E_{t_0}$. For if $C_0$ is defined as in Definition \ref{def:good}(v), then conditions (iii)-(v) guarantee that its coarsening contains $C_{t_0}$. By condition (ii), this is enough, since \eqref{eq:6tocons1}-\eqref{eq:6tocons6} continue to hold if the size of $C_0$ is increased at the expense of $B_0$ and $D_0$. Note that, since refining leads to changes in the relative sizes of the sets, the condition (ii) is what is needed for \eqref{eq:6tocons4}-\eqref{eq:6tocons6} to hold.
\end{proof}

\begin{corollary}
There is a time $T$ such that $U^T f^6$ is a consensus.
\end{corollary}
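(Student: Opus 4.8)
The plan is to combine the equivalences collected at the end of Section 2 with Corollary \ref{cor:6tocons}. By equivalence (iv) it suffices to exhibit a single odd $n$ and a bounded time $T$ such that $U^T f^{n+(n-1)k,6}$ is a consensus for every $k \geq 0$. I would take $n = 80\,005$ (chosen $\equiv 1 \pmod 6$ for coding convenience) and $g = f^{n+(n-1)k,6} = (f^{n,6})^{(k)}$, so that $e^0 := B_k^n(g) - f^{n,6} = 0$ is trivially a consistent deviation with vanishing left and right bounds. It then remains only to verify that conditions (i)--(v) of Corollary \ref{cor:6tocons} hold for $f = f^{n,6}$ at some time $t_0$; the assertion to be certified is that $t_0 = 8$ works.

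First I would compute, in the high-precision ball arithmetic of Arb, the iterates $U^t f^{80\,005,6}$ for $t = 0, 1, \dots, 8$, together with the recursively defined error bounds $e_l^t$, $e_r^t$ of Theorem \ref{cor:devprop} started from $e_l^0 = e_r^0 = 0$. Since the only source of error is the ghost-agent term $2/n_i^t$ of Lemma \ref{lem:ghostagents}, and since $n$ is large, these bounds should stay small after eight updates --- small enough that condition (i), namely $L_8 + e_l^8(1) + e_r^8(n) \leq 4$, holds with room to spare.

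Next I would read the good sets off the computed profile $U^8 f^{80\,005,6}$: by $t = 8$ it has settled into the characteristic five-group shape, with $A_8$ the left extremal clump, $E_8$ the symmetric central clump, $C_8$ the microcluster of in-between agents seeing all of $A_8 \cup E_8$, and $B_8$, $D_8$ the leftover agents flanking $C_8$. One checks parts (i)--(iv) of Definition \ref{def:good} directly; that $\min\{|A_8|, |C_8|, |E_8|\} \geq 2$; the three size inequalities of condition (ii), stated in terms of the $\pm 1$-perturbed cardinalities precisely so that coarsening cannot break them; and the three visibility inequalities (iii)--(v), each a comparison of computed opinions $U^8 f(\cdot)$ widened by $e_l^8$ and $e_r^8$. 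Corollary \ref{cor:6tocons} then supplies a bounded $T$, independent of $k$, and the equivalences of Section 2 upgrade this to the statement that $U^T f^6$ is a consensus.

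The main obstacle is quantitative rather than conceptual: everything hinges on the microcluster $C_8$ being genuinely present and sufficiently large --- this is the content of the third size inequality of condition (ii), which requires $|C_8|\,|E_8|$ to dominate the contributions of $|B_8|$ and $|D_8|$ --- while simultaneously the bounds $e_l^8$, $e_r^8$ must remain tight enough that the visibility inequalities (iii)--(v), which concern the narrow gap between the extremists in $A_8$ and the centre $E_8$, are not overwhelmed. Updating too few times and the five-group structure, in particular the microcluster, has not yet crystallised; updating too many times and the Section 3 error analysis degrades past usefulness. That $t_0 = 8$ lands in this sweet spot for $n = 80\,005$ is exactly what the computation is there to establish.
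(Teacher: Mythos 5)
Your proposal is correct and follows essentially the same route as the paper: the paper's proof likewise applies Corollary \ref{cor:6tocons} with $n = 80~005$, $g = (f^{n,6})^{(k)}$, vanishing initial deviation bounds, and $t_0 = 8$, relying on the Arb computation to certify the conditions (the computed good sets have sizes $|A_8| = 31~537$, $|B_8| = 0$, $|C_8| = 40$, $|D_8| = 3$, $|E_8| = 16~845$). Your discussion of why $t_0 = 8$ is the right window matches the paper's reasoning exactly.
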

\begin{proof}

We apply Corollary \ref{cor:6tocons} with
\begin{align}
n &= 80~005\\
f &= f^{n,6}\\
g &= (f^{n,6})^{(k)}\\
e^0_l &= e^0_r \equiv 0 \\
t_0 &= 8.
\end{align}

The computation (see the ancillary file \texttt{L6.jl} for the code) shows that the conditions of Corollary \ref{cor:6tocons} are then satisfied for good sets with sizes
\begin{align}
|A_8| =  31~537 ~~ |B_8| = 0 ~~ |C_8| = 40  ~~ |D_8| = 3 ~~ |E_8| = 16~845.
\end{align}
\end{proof}

\section{Final remarks}
The two mechanisms behind achieving consensus discussed in Sections 4 and 5 are quite different, and the case $L=6$ is clearly the harder one.

One aspect of this is that consensus at $L=6$ seems to be much harder to detect if one simulates random profiles instead of equally spaced ones. In our own experiments, we only detect the consensus strikes back phenomenon for random profiles when the number of agents simulated was in the milions. A heuristic explanation for this is that the structure with microclusters is more sensitive to the noise that comes if the initial opinions are chosen randomly. Even if the initial profile is quite uniform at $t=0$, the HK-updating operator tends to amplify what little unevenness there is, so that even if some update of the profile has approximately the right structure, the sizes of the ``clusters'' meant to correspond to $A$, $B$, $C$, $D$ and $E$ might vary more than one might first guess. In particular, the absence of guaranteed symmetry in a random profile is problematic. There are then two things that can go wrong on the way to consensus. The first is that one microcluster ends up significantly larger than the other, in which case the extremists on one side will be pulled faster towards the centre and the centre in turn will be drawn to this side. The second is that, even if the microclusters end up about the same size, fragmentation might follow if the sizes of the extremist clusters are too different. The smaller one will be more easily affected by its microcluster, which will also place itself further away, and might thus win the race to the centre and steal away the central cluster.

The only chance for a uniformly random profile with $L=6$ to reach consensus is to have the two extremist clusters see the centre at \emph{exactly} the same time, and fragmentation of uniformly random profiles might happen even for $n$ in the millions.

~

Two possible lines of further research are
\begin{itemize}
\item To understand what happens when $L \in [L_1,\,L_2]$.
\item To apply our method of comparing a profile to its refinements to distributions other than the uniform. 
\end{itemize}

\section{Acknowledgements}
I would like to thank my supervisor Peter Hegarty for reviewing several drafts of this paper, for helpful discussions and advice. I would also like to thank Martin Raum and Tommy Vågbratt for advice on the code.

\newpage

\begin{alphasection}
  \setcounter{section}{1}
  \section*{Appendix A: Explicit formulas for the first two updates of an equally spaced profile } 
  \begin{definition}
For an equally spaced profile $f$, the difference in opinion between two consecutive agents will be referred to as the \emph{separation parameter}, or just \emph{separation}, of $f$, and will be denoted by $d$. When used about updates of equally spaced profiles, it refers to the parameter of the original profile.
\end{definition}

\begin{prop}\label{lem:step1}
Let $f = f^{n,L}$ such that $\frac{1}{d}$ is an integer and $L \geq 2$. 
Then the symmetric profile $U f$ is given, for $i \leq \frac{n}{2}$, by
\begin{align*}
U f(i) =& \frac{d(i-1) + 1}{2} & \text{if } & i \leq \frac{1}{d} + 1 \\
U f(i) =& f(i) = d(i-1) & \text{if } & i \geq \frac{1}{d} + 1.
\end{align*}
Note in particular that, for any $k \geq 0$, $B_k^n(Uf^{(k)}) = Uf$.
\end{prop}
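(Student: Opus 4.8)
The plan is to compute $Uf(i)$ directly from Definition \ref{def:update1} applied to the discrete profile $f = f^{n,L}$ with separation $d = L/n$, exploiting that the opinions $f(i) = d(i-1)$ form an arithmetic progression with common difference $d$, where $1/d \in \mathbb{Z}$. The symmetry of $Uf$ is immediate from Observation \ref{obs:symmetry}, since $f^{n,L}$ is symmetric about $L/2$ (its opinions $d(i-1)$ and $d(n-i)$ sum to $d(n-1) = L - d$, i.e. it is symmetric about $(L-d)/2$, and one checks the averaging is symmetric accordingly); hence it suffices to treat $i \leq n/2$.

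\textbf{Determining the neighbourhoods.} For an agent $i$ with $f(i) = d(i-1)$, its neighbourhood $\mathcal{N}_i(f)$ consists of all $j$ with $|d(j-1) - d(i-1)| \leq 1$, i.e. $|j - i| \leq 1/d$. Because $1/d$ is an integer, the condition $|j-i|\le 1/d$ is clean: when $i$ is far enough from the left edge that $i - 1/d \geq 1$, i.e. $i \geq 1/d + 1$, and also far from the right edge, the neighbourhood is the full symmetric block $\{i - 1/d,\, \dots,\, i + 1/d\}$, whose average is exactly $f(i)$ by symmetry of an arithmetic progression about its midpoint; this gives the second case $Uf(i) = f(i) = d(i-1)$. (One must check $i + 1/d \le n$, but for $i \le n/2$ and $L \ge 2$, hence $1/d \le n/2$, this holds.) When $i \leq 1/d + 1$, the left end is truncated: $\mathcal{N}_i(f) = \{1,\, 2,\, \dots,\, i + 1/d\}$, a block of $i + 1/d$ consecutive agents starting at $1$. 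Its average is the midpoint of the arithmetic progression $f(1),\, \dots,\, f(i+1/d)$, namely
\begin{align*}
Uf(i) = \frac{f(1) + f(i + 1/d)}{2} = \frac{0 + d(i - 1 + 1/d)}{2} = \frac{d(i-1) + 1}{2},
\end{align*}
which is the first case. The two formulas agree at $i = 1/d + 1$, as they should, since there the truncated block is the full block.

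\textbf{The final remark.} For the claim $B_k^n(Uf^{(k)}) = Uf$: the canonical refinement $f^{(k)} = f^{n+(n-1)k,\,L}$ is again an equally spaced profile, on $N = n + (n-1)k$ agents, with separation $d' = L/N = d/(k+1)$. Its reciprocal $1/d' = (k+1)/d$ is an integer, so the proposition applies to $f^{(k)}$ as well, and $Uf^{(k)}$ is given by the same two-case formula with $d$ replaced by $d'$. The coarsening $B_k^n$ samples $Uf^{(k)}$ at the agents with indices $i + (i-1)k = 1 + (i-1)(k+1)$ for $i = 1,\dots,n$; at such an index $i' = 1 + (i-1)(k+1)$ one has $f^{(k)}(i') = d'(i'-1) = d'(i-1)(k+1) = d(i-1) = f(i)$, and the threshold $i' \lessgtr 1/d' + 1$ becomes $1 + (i-1)(k+1) \lessgtr (k+1)/d + 1$, i.e. $i - 1 \lessgtr 1/d$, i.e. $i \lessgtr 1/d + 1$ — the same threshold as for $f$. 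Substituting into the refined formula, the first case gives $Uf^{(k)}(i') = (d'(i'-1) + 1)/2 = (d(i-1)+1)/2 = Uf(i)$ and the second gives $d'(i'-1) = d(i-1) = Uf(i)$. Hence $B_k^n(Uf^{(k)})(i) = Uf(i)$ for all $i$, as claimed.

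\textbf{Main obstacle.} The only real care needed is in the bookkeeping of edge effects: verifying that for $i \leq n/2$ and $L \geq 2$ the right edge never truncates $\mathcal{N}_i(f)$, and that the left-truncated block is exactly $\{1,\dots,i+1/d\}$ with no further subtlety at $i=1$ (where it is $\{1,\dots,1+1/d\}$, consistent with the formula giving $Uf(1) = 1/2$). The integrality of $1/d$ is what makes all these blocks exact and the midpoint-of-arithmetic-progression computation exact; without it one would pick up floor/fractional corrections. Everything else is routine arithmetic.
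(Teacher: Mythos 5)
Your proof is correct and follows essentially the same route as the paper's: use the integrality of $1/d$ to identify each neighbourhood as an exact block of consecutive agents, then average the resulting arithmetic progression (truncated on the left for $i\le 1/d+1$, symmetric about $i$ otherwise). You additionally verify the closing remark $B_k^n(Uf^{(k)})=Uf$ in detail, which the paper merely states; that part is also correct.
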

\begin{taggedblock}{proofsuniform}
\begin{proof}
First assume $i \leq \frac{1}{d} + 1$. Then, the leftmost neighbour of $i$ has opinion 0 and, since $\frac{1}{d}$ is an integer and $L \leq 2$, its rightmost neighbour has opinion $f(i) + 1$. Let $m = i + \frac{1}{d}$ denote its total number of neighbours. Then,

\begin{align*}
Uf(i) = &
\frac{1}{m} \sum_{j=0}^{m-1} jd =
d \left(\frac{1}{m} \sum_{j=0}^{m-1} j \right) =
d \left(\frac{1}{m} \frac{(m-1)m}{2} \right)\\
 & = \frac{d(m - 1)}{2} =
\frac{d(i-1) + 1}{2},
\end{align*}   
as stated.

If $\frac{1}{d} + 1 \leq i \leq \frac{n}{2}$ then, since $L \geq 2$, $i$ has the same configuration of neighbours on both sides, and hence its opinion won't change.
\end{proof}
\end{taggedblock}

\begin{prop}\label{lem:step2}
Let $f = f^{n,L}$ with $L \geq 4$ and such that $\frac{1}{d}=\frac{n-1}{L}$ is an even integer.

Then the symmetric profile $U^2f$ is given, for $i \leq \frac{n}{2}$, by
\begin{align}
U^2f(i)= & \frac{d \underline{i}^2 + 6\underline{i} + \frac{11}{d} - d + 2}{4 \left(\underline{i} + \frac{3}{d} + 1 \right)} & \text{if } & i \leq \frac{1}{d} + 1 & \label{eq:t2first}\\
U^2f(i)= & \frac{di^2 +2i - di + \frac{3}{2d} -\frac{1}{2}}{\frac{2}{d} + 2i} & \text{\!\!\!\!\!\!if } \frac{1}{d} + 1 \leq & i \leq \frac{3}{2d} + 1 &\label{eq:t2second}\\
U^2f(i)= & \frac{-di^2 + 6i + 4di - 3d - 6}{2\left( \frac{4}{d} + 2 -i \right)} & \text{\!\!\!if } \frac{3}{2d} + 1 \leq & i \leq \frac{2}{d} + 1 & \label{eq:t2third}\\
U^2f(i)= & (i-1)d & \text{\!\!\!if } \frac{2}{d} + 1 \leq & i \leq \frac{n}{2}, & \label{eq:t2fourth}
\end{align}
where $\underline{i} = 2 \left\lfloor \frac{i-1}{2} \right\rfloor + 1$ is used to denote rounding down to the nearest odd integer.
\end{prop}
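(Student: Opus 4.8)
The plan is to compute $U^2 f = U(Uf)$ directly, substituting the explicit description of $Uf$ furnished by Proposition~\ref{lem:step1} into Definition~\ref{def:update1}. Our hypotheses ($\tfrac1d$ an even integer and $L\ge 4$) in particular imply those of Proposition~\ref{lem:step1}, so $Uf$ is the piecewise affine profile which, on the left half, has slope $\tfrac d2$ on the agents $1,\dots,\tfrac1d+1$ and slope $d$ on the agents $\tfrac1d+1,\dots,\tfrac n2$, with the mirror-image behaviour on the right half; it is symmetric, hence so is $U^2f$ (Observation~\ref{obs:symmetry}), so it suffices to determine $U^2f(i)$ for $i\le\tfrac n2$. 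The four cases of the statement are exactly the four ranges of $i$ on which the pair $\bigl(\min\mathcal N_i(Uf),\,\max\mathcal N_i(Uf)\bigr)$ admits a single uniform description; the hypothesis $L\ge 4$ guarantees $\tfrac2d+1\le\tfrac{n+1}{2}$, so that the ranges are well defined and an agent $i\le\tfrac2d+1$ never reaches any agent of the mirrored slow branch.

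The first step is to pin down the endpoints $\ell(i):=\min\mathcal N_i(Uf)$ and $r(i):=\max\mathcal N_i(Uf)$ in each range. For $i\ge\tfrac2d+1$ the neighbourhood of $i$ in $Uf$ is symmetric about $i$ and lies entirely in the fast ($d$-slope) branch, so $Uf(i)$ is already a fixed point of the relevant local average and $U^2f(i)=Uf(i)=(i-1)d$, which is \eqref{eq:t2fourth}. For $i\le\tfrac{3}{2d}+1$ one has $Uf(1)\ge Uf(i)-1$, so $\ell(i)=1$; the right endpoint is then obtained by solving $Uf(j)\le Uf(i)+1$ on the fast branch. When $i\le\tfrac1d+1$ (so $Uf(i)=\tfrac{d(i-1)+1}{2}$) this yields the real threshold $\tfrac{i+1}{2}+\tfrac{3}{2d}$; since $\tfrac1d$ is even, $\tfrac{3}{2d}$ is an integer, and rounding the remaining half-integer part down replaces $\tfrac{i+1}{2}$ by $\tfrac{\underline i+1}{2}$, giving $r(i)=\tfrac{\underline i+1}{2}+\tfrac{3}{2d}$. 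This interplay between the half-slope $\tfrac d2$ and the integrality of agent indices is precisely why the rounding operator $\underline i$ enters \eqref{eq:t2first}. When $\tfrac1d+1\le i\le\tfrac{3}{2d}+1$ one instead gets the integer $r(i)=i+\tfrac1d$, and the thresholds $\tfrac1d+1$, $\tfrac{3}{2d}+1$, $\tfrac2d+1$ are exactly the values of $i$ at which $\ell(i)$ leaves agent $1$ or $r(i)$ passes from one branch to the other, which is what separates the four cases.

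With $\ell(i)$ and $r(i)$ in hand, $U^2f(i)=\tfrac{1}{r(i)-\ell(i)+1}\sum_{j=\ell(i)}^{r(i)}Uf(j)$, and this sum splits into at most two arithmetic progressions --- one over the slow branch $1,\dots,\tfrac1d+1$ and one over the fast branch --- each evaluated by $\sum_{k=a}^{b}k=\tfrac{(a+b)(b-a+1)}{2}$; carrying this out and simplifying yields \eqref{eq:t2first}--\eqref{eq:t2third}. I would organise the argument from the innermost range outward and, as a consistency check, verify that consecutive formulas agree at the shared endpoints $i=\tfrac1d+1,\tfrac{3}{2d}+1,\tfrac2d+1$, which they must since the underlying averages vary continuously in the data. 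The main obstacle is purely bookkeeping: in each range one must confirm not just the location of $\ell(i)$ and $r(i)$ but also on which side the single slope-changing agent $\tfrac1d+1$ is tallied, and that the count $r(i)-\ell(i)+1$ in the denominator is exact --- an off-by-one there, or a misplacement of that agent, alters the denominator and destroys the closed form. It is worth recording at the outset that $L\ge 4$ together with $\tfrac1d=\tfrac{n-1}{L}$ even is what makes all the relevant quantities ($\tfrac1d+1$, $\tfrac{3}{2d}$, $\tfrac{3}{2d}+1$, $\tfrac2d+1$, and the kink locations) integers or half-integers in a controlled way, so that rounding only ever produces the single term $\underline i$.
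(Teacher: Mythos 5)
Your proposal is correct and follows essentially the same route as the paper: substitute the explicit piecewise form of $Uf$ from Proposition \ref{lem:step1}, determine the neighbourhood endpoints on each of the four ranges (with the floor/parity analysis producing $\underline{i}$, and care taken over the junction agent $\frac{1}{d}+1$ shared by the two branches), and evaluate the resulting arithmetic-progression averages. The paper merely organises the same bookkeeping via counts $m_i^\pm$ and averages $a_i^\pm$ over the slow and fast branches, subtracting one copy of the doubly-counted agent at opinion $1$.
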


\begin{taggedblock}{proofsuniform}
\begin{proof}
First note that, since $L \geq 4$, \eqref{eq:t2fourth} follows immediately from Proposition \ref{lem:step1} which says that, for any neighbour $j$ of any agent $i$ such that $\frac{2}{d} + 1 \leq i \leq \frac{n}{2}$, we have $Uf(j) = f(j)$.

The proofs of \eqref{eq:t2first}-\eqref{eq:t2third} are a straightforward if messy computation.

For each agent $i \leq \frac{2}{d} + 1$, we define sets 
\begin{enumerate}[label={\alph*)}]
\item $S_i^- = \{j \in \mathcal{N}_i(Uf) : f(j) \leq 1\}$
\item $S_i^+ = \{j \in \mathcal{N}_i(Uf) : f(j) \geq 1\}$.
\end{enumerate}
Since, by Proposition \ref{lem:step1}, $Uf (\frac{2}{d} + 1) = 2$, both sets are non-empty for each $i$. Denote
\begin{align*}
m_i^- &= |S_i^-|, ~~~~  m_i^+ = |S_i^+| \\
a_i^- &= \langle Uf(j)  \rangle_{j \in S_i^-}, ~~~~    a_i^+
= \langle Uf(j) \rangle_{j \in S_i^+}.
\end{align*}

Then
\begin{equation}\label{eq:viktatmedel}
U^2f(i) = \frac{m^-_i a^-_i + m^+_i a^+_i -1}{m^-_i + m^+_i - 1},
\end{equation}
where the subtracted ones serve to compensate for the fact that we count the agent with opinion $1$ twice.

By Proposition \ref{lem:step1}, the profile $Uf$ is equally spaced above $1$, with separation $d$. Hence
\begin{align}
m^+_i &=  \left\lfloor \frac{Uf(i)}{d} \right\rfloor + 1,\\
a^+_i &= \frac{2+d \left\lfloor \frac{Uf(i)}{d} \right\rfloor}{2}.
\end{align}
If an agent $i$ can see the left extremist in $Uf$, Proposition \ref{lem:step1} implies that $m^-_i = \frac{1}{d}+1$ and $a^-_i = \frac{3}{4}$.

In completing the calculation of $U^2f(i)$ for $i \leq \frac{2}{d} + 1$, we can now distinguish three cases depending on $i$.

{\bf Case 1:} $i \leq \frac{1}{d} + 1$.

We have $Uf(i) = \frac{1}{2} + d\left( \frac{i-1}{2} \right)$, so
\begin{equation*}
m^+_i =
\left\lfloor \frac{1}{2d} + \frac{i-1}{2} \right\rfloor + 1 \stackrel{\frac{1}{d} \text{ even}}{=}
\frac{1}{2d} + \left\lfloor \frac{i-1}{2} \right\rfloor + 1 =
\frac{1}{2} \left( \frac{1}{d} + 1 + \underline{i} \right),
\end{equation*}
and $a^+_i = \frac{1}{4} \left(5 + (\underline{i}-1)d \right)$. Further, $m_i^- = \frac{1}{d} + 1$ and $a_i^- = \frac{3}{4}$.

Inserting these into \eqref{eq:viktatmedel} gives \eqref{eq:t2first}.

{\bf Case 2:} $\frac{1}{d} + 1 \leq i \leq \frac{3}{2d} + 1$.

 By Proposition \ref{lem:step1}, $i$ can still see the extremist, so $m_i^- = \frac{1}{d} + 1$ and $a_i^- = \frac{3}{4}$. Further, $Uf(i) = (i-1)d$, so $m^+_i = i$ and $a^+_i = \frac{1}{2} \left( 2 + id - d \right)$.

Inserting these into \eqref{eq:viktatmedel} gives \eqref{eq:t2second}.

{\bf Case 3:} $\frac{3}{2d} + 1 \leq i \leq \frac{2}{d} + 1$.

We have $m^+_i = i$ and $a^+_i = \frac{1}{2} \left( 2 + id - d \right)$ as above. Starting from $i = \frac{3}{2d}+1$, every step inwards removes two neighbours to the left and increases $a_i^-$ by $\frac{d}{2}$, so
\begin{align*}
m_i^- &= \frac{1}{d} + 1 - 2\left(i - \left(\frac{3}{2d} + 1\right) \right) \\
a_i^- &= \frac{3}{4} + \frac{d}{2} \left( i - \left( \frac{3}{2d} + 1 \right) \right) = \frac{}{} \frac{(i - 1)d}{2}.
\end{align*}
Inserting these into \eqref{eq:viktatmedel} gives \eqref{eq:t2third}.
\end{proof}
\end{taggedblock}

\begin{rem}\label{rem:23over6}
If we only care about calculating the opinions of the first $\frac{1}{d} + 1$ agents, the assumption of $L \geq 4$ in Proposition \ref{lem:step2} can be relaxed to $L \geq 3$ without affecting the proof of \eqref{eq:t2first}. Substituting $i = 1$ into \eqref{eq:t2first} shows that $U^2 f^L (1) \geq \frac{11}{12}$. Fix a rational $L \leq \frac{23}{6}$. There must exist an infinite increasing sequence $\left( n_i \right)_{i=1}^\infty$ such that all $f^{n_i,L}$ satisfy the conditions in the proposition, and using symmetry we get that $D(U^2 f^{n_i,L}) \leq \frac{23}{6} - 2 \frac{11}{12} = 2$ for all such $n_i$.

By Proposition \ref{lem:cont}, $L \leq \frac{23}{6} \implies D(U^2 f^L) \leq 2$. Hence, by Corollary \ref{cor:regularity} there exists a bounded $T$ such that $U^T f^L$ is a consensus for all $L \leq \frac{23}{6}$.
\end{rem}

\end{alphasection}

\end{document}